\def\R{{\mathbb{R}}}
\def\N{{\mathbb{N}}}
\def\C{{\mathbb{C}}}
\def\G{{\overline{G}}}
\def\Z{{\mathbb{Z}}}
\def\I{{\mathcal{I}}}
\def\J{{\mathcal{J}}}
\newcommand{\<}{\vartriangleleft}
\newcommand{\lf}{\leq_{fin}}
\newcommand{\nf}{\vartriangleleft_{fin}}
\DeclareMathOperator{\Ann}{\text{Ann}}
\DeclareMathOperator{\Aut}{\text{Aut}}
\DeclareMathOperator{\dr}{\dim_{\text{Rok}}}
\newcommand{\ssubset}{\subset\mathrel{\mkern-9mu}\subset}
\newtheorem{theorem}{Theorem}[section]
\newtheorem{prop}[theorem]{Proposition}
\newtheorem{cor}[theorem]{Corollary}
\newtheorem{lemma}[theorem]{Lemma}
\theoremstyle{definition}
\newtheorem{definition}[theorem]{Definition}
\newtheorem{rem}[theorem]{Remark}
\title{Rokhlin Dimension: Permanence Properties and Ideal Separation}
\author{Sureshkumar M, Prahlad Vaidyanathan}
\begin{document}

\begin{abstract}
We study the Rokhlin dimension for actions of residually finite groups on C*-algebras. We give a definition equivalent to the original one due to Szab\'{o}, Wu and Zacharias. We then prove a number of permanence properties and discuss actions on $C_0(X)$-algebras and commutative C*-algebras. Finally, we use a theorem of Sierakowski to show that, for an action with finite Rokhlin dimension, every ideal in the associated reduced crossed product C*-algebra arises from an invariant ideal of the underlying algebra.
\end{abstract}

\maketitle

The study of group actions on C*-algebras is a deep and integral part of the general theory of operator algebras. It allows us to understand the underlying algebra by studying its symmetries, and also provides us with a new C*-algebra, the crossed product.  This crossed product is a fascinating object that (broadly speaking) contains the original algebra and the group, and implements the group action via conjugation by unitaries. The difficulty then is to understand the structure of this crossed product C*-algebra, and a key question in this context is to determine when certain `regularity' properties pass from the underlying algebra to the crossed product. These regularity properties include many properties that are useful from the point of view of the classification programme such as finiteness of nuclear dimension, simplicity, $\mathcal{Z}$-stability, etc. \\

The Rokhlin property (studied by Kishimoto \cite{kishimoto_shifts}, Izumi \cite{izumi} and others) was a first attempt in this direction, but it has the crucial problem in that it requires the underlying algebra to have sufficiently many projections. The notion of Rokhlin dimension, introduced by Hirshberg, Winter and Zacharias \cite{hirshberg}, seeks to avoid this problem by replacing projections by positive contractions. In the past decade or so, this idea has proved to be very fruitful. In \cite{hirshberg}, the authors studied such actions of finite groups and of the group of integers. This was generalized to actions of compact groups by Gardella \cite{gardella_compact} and to actions of residually finite groups by Szab\'{o}, Wu and Zacharias \cite{szabo_wu_zacharias}. In each case, it was proved that actions with finite Rokhlin dimension allow us to deduce a number of regularity properties of the crossed product C*-algebra from those of the underlying algebra (particularly in the compact case, and under the `commuting towers' condition \cite{gardella_hirshberg_santiago}). \\

The goal of this paper is to study actions of countable, discrete, residually finite groups with finite Rokhlin dimension, building on the work done in \cite{szabo_wu_zacharias} and \cite{gardella_compact}. We begin in \cref{sec_preliminaries}, where we discuss the profinite completion of a residually finite group. This artifact allows us to prove results for residually finite groups along the same lines as those of compact groups. We also describe the topological join of two compact spaces, a notion that gets used in the discussion on Rokhlin dimension with commuting towers. In \cref{sec_definition}, we recall the definition of Rokhlin dimension from \cite{szabo_wu_zacharias}, and give a number of equivalent formulations. In \cref{sec_permanence}, we show that finiteness of Rokhlin dimension is preserved under many standard constructions such as passage to hereditary subalgebras, quotients, inductive limits and extensions. Moreover, we show that finiteness of Rokhlin dimension passes to the action of a subgroup provided the subgroup is a virtual retract. \\

In \cref{sec_cx_algebras}, we consider actions on commutative C*-algebras. After proving an estimate for the Rokhlin dimension of an action on a $C_0(X)$-algebra, we show that an action on a commutative C*-algebra has finite Rokhlin dimension if the maximal ideal space has finite covering dimension and the induced action on it is both free and proper. In \cref{sec_outerness}, we show that actions with finite Rokhlin dimension are outer, and are properly outer if the group is amenable and has the (VRC) property (see \autoref{defn_vrc_lr}). Along the way, we use a theorem of Sierakowski \cite{sierakowski} to show that if an action is exact and has finite Rokhlin dimension, then the ideals in the associated crossed product C*-algebra must arise from invariant ideals of the underlying C*-algebra.

\section{Preliminaries}\label{sec_preliminaries}

\subsection{The Profinite Completion}

Recall that a discrete group $G$ is said to be residually finite if for each non-identity element $g\in G$, there is a subgroup $H$ of $G$ of finite index such that $g\notin H$. Given such a group $G$, let $\I_G$ denote the set of all normal subgroups of $G$ of finite index, partially ordered by reverse inclusion. In other words, $H\leq K$ if and only if $K\subset H$. Whenever $H\leq K$, there is a homomorphism $\varphi_{K,H} : G/K \to G/H$ given by $gK\mapsto gH$, and this makes the collection $\{G/H, \varphi_{K,H}\}_{\I_G}$ an inverse system of groups. The inverse limit of this system is called the profinite completion of $G$ and is denoted by $\G$. By definition,
\[
\G = \left\lbrace (g_HH)_{H\in \I_G} : g_KH = g_HH \text{ for all } H,K\in \I_G \text{ with } K\subset H\right\rbrace
\]
Note that $\G$ is a group under componentwise multiplication and is a topological space as a subspace of $\prod_{H \in \I_G} G/H$. A profinite group is, by definition, the inverse limit of a surjective inverse system of finite groups, and $\G$ is therefore a profinite group. \\

For each $H\in \I_G$, there is a natural action $\beta^H : G\curvearrowright G/H$ given by $\beta^H_t(gH) := tgH$. The maps $\varphi_{K,H}$ respect these actions, so we have an induced left-translation action $\beta : G\curvearrowright \G$ given by
\[
\beta_t((g_HH)_{H\in \I_G}) := (tg_HH)_{H\in \I_G}.
\]
The most important properties of $\G$ (from our perspective) are listed below, and the interested reader will find proofs of all these facts in \cite{ribes}.

\begin{prop}\label{prop_profinite_completion}
Let $G$ be a discrete, residually finite group and let $\G$ denote its profinite completion.
\begin{enumerate}
\item For each $K\in \I_G$, there is a surjective group homomorphism $\pi_K : \G \to G/K$ given by $(g_HH)_{H\in \I_G} \mapsto g_KK$.
\item If $\{H_1, H_2, \ldots, H_k\}$ is a finite collection of subgroups in $\I_G$ and $\{x_1, x_2,\ldots, x_k\} \subset G$, then the set
\[
\bigcap_{i=1}^k \pi_{H_i}^{-1}(\{x_iH_i\})
\]
is an open set in $\G$, and sets of this type form a basis for the topology on $\G$.
\item $\G$ is compact, Hausdorff and totally disconnected.
\item The map $\iota : G\to \G$ given by $g \mapsto (gH)_{H\in \I_G}$ is an injective group homomorphism and $\iota(G)$ is dense in $\G$.
\item If $H$ is a profinite group and $\varphi : G\to H$ is a group homomorphism, then there is a unique continuous group homomorphism $\overline{\varphi} : \G\to H$ such that $\overline{\varphi}\circ \iota = \varphi$. 
\item The action $\beta:G\curvearrowright \G$ defined above is both free and minimal (in the sense that $\G$ has no non-trivial closed $\beta$-invariant subsets).
\end{enumerate}
\end{prop}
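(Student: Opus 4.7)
The plan is to prove items (1)--(6) in turn, using standard facts about inverse limits of finite groups together with residual finiteness of $G$. For (1)--(3), everything follows formally from the inverse-limit construction. The map $\pi_K$ is the restriction to $\G$ of the $K$-coordinate projection on $\prod_H G/H$, hence a continuous homomorphism, and $\pi_K(\iota(g)) = gK$ gives surjectivity. The product topology on $\prod_H G/H$ has a basis of cylinder sets with all but finitely many factors equal to $G/H$; since each $G/H$ is finite and discrete, we may refine to singleton factors and intersect with $\G$, yielding the basis in (2). For (3), $\G$ is Hausdorff and totally disconnected as a subspace of a product of finite discrete groups, and is closed in that product since it is cut out by the closed equations $\varphi_{K,H}(g_K K) = g_H H$ for $H \subset K$; Tychonoff then gives compactness.

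For (4), that $\iota$ is a homomorphism is immediate, and injectivity is precisely residual finiteness of $G$. For density, take a non-empty basic open set $U = \bigcap_{i=1}^k \pi_{H_i}^{-1}(\{x_i H_i\})$ and set $H := \bigcap_{i=1}^k H_i \in \I_G$. Picking any $y = (y_L L)_L \in U$, the compatibility condition gives $y_H H_i = y_{H_i} H_i = x_i H_i$ for each $i$ (since $H \subset H_i$), so $\iota(y_H) \in U$. For (5), the plan is to realize the target profinite group $H$ as $\varprojlim_N H/N$ over open normal subgroups $N$ of $H$. For each such $N$, the composition $\varphi_N \colon G \to H \to H/N$ has kernel in $\I_G$ (because $H/N$ is finite), so factors as $\widetilde{\varphi}_N \circ \pi_{\ker \varphi_N}$ with $\widetilde{\varphi}_N$ defined on the finite quotient; the resulting maps $\widetilde{\varphi}_N \circ \pi_{\ker \varphi_N} \colon \G \to H/N$ are compatible across varying $N$, and the universal property of $H = \varprojlim_N H/N$ assembles them into the required $\overline{\varphi} \colon \G \to H$. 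Uniqueness follows from continuity together with density of $\iota(G)$ established in (4).

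For (6), freeness: if $\beta_t(x) = x$ for $x = (g_H H)_H$ and $t \neq e$, then $g_H^{-1} t g_H \in H$ for every $H \in \I_G$, and normality of $H$ forces $t \in H$ for every such $H$, contradicting residual finiteness. For minimality, fix $x = (x_L L)_L \in \G$ and a non-empty basic open $U = \bigcap_{i=1}^k \pi_{H_i}^{-1}(\{x_i H_i\})$; set $H := \bigcap_{i=1}^k H_i$ and pick $y \in G$ with $y H_i = x_i H_i$ for all $i$ (which is possible because $U$ is non-empty). Setting $t := y x_H^{-1}$, a direct calculation gives $t x_{H_i} H_i = y x_H^{-1} x_{H_i} H_i = y H_i = x_i H_i$, where the middle equality uses the compatibility $x_H H_i = x_{H_i} H_i$ coming from $H \subset H_i$; hence $\beta_t(x) \in U$.

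The main delicacies I expect are the factorization step in (5), which requires first identifying the target profinite group as an inverse limit before invoking the universal property, and the minimality computation in (6), which relies essentially on the compatibility relation built into the definition of $\G$. Everything else is a routine verification from the inverse-limit construction and the discreteness of the quotients $G/H$.
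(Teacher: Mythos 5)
Your proof is correct. Note that the paper itself offers no proof of this proposition: it simply points the reader to Ribes--Zalesskii for all six items, so there is nothing to compare against except the standard textbook treatment, which your argument matches. All the key steps check out: closedness of $\G$ in $\prod_H G/H$ plus Tychonoff for (3); the compatibility relation $y_HH_i = y_{H_i}H_i$ for $H = \bigcap_i H_i$ driving both the density argument in (4) and the minimality argument in (6); the factorization of $\varphi$ through finite quotients $G/\varphi^{-1}(N)$ for (5); and normality of the subgroups in $\I_G$ converting the fixed-point condition $g_H^{-1}tg_H \in H$ into $t \in H$ for freeness. One small point worth making explicit: the paper's definition of residual finiteness only provides, for each $g \neq e$, a finite-index subgroup (not necessarily normal) avoiding $g$, whereas injectivity of $\iota$ and the freeness argument need $\bigcap_{H \in \I_G} H = \{e\}$ over \emph{normal} finite-index subgroups; this is repaired by passing to the normal core, which still has finite index. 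With that standard remark added, the proof is complete.
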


Now consider the commutative C*-algebra $C(\G)$. Since $\G = \varprojlim (G/H, \varphi_{K,H})$, it follows that $C(\G)$ is the inductive limit of the system $\{C(G/H), \varphi_{K,H}^{\ast}\}$. For each $H\in \I_G$, let $\sigma^H : G\to \Aut(C(G/H))$ be the action induced by $\beta^H$ (in other words, $\sigma^H_t(f)(gH) := f(t^{-1}gH)$). Similarly, let $\sigma : G\to \Aut(C(\G))$ be the action induced by $\beta$. Then the maps $\pi_K^{\ast} : C(G/K)\to C(\G)$ and $\varphi_{K,H}^{\ast} : C(G/H)\to C(G/K)$ are all $G$-equivariant. Therefore, we obtain the following fact.
\begin{lemma}\label{lem_inductive_limit_profinite_completion}
If $G$ is a discrete, residually finite group, then
\[
(C(\G), \sigma) \cong \lim_{\I_G} (C(G/H), \sigma^H).
\]
\end{lemma}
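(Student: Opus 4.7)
The plan is to use Gelfand duality together with the universal property of the C*-algebraic inductive limit, carrying the $G$-action along at every step. Write $\pi_K : \G \to G/K$ for the projections of \cref{prop_profinite_completion}(1). These are continuous surjections, equivariant with respect to $\beta^K$ and $\beta$ directly from the coordinate-wise definition of $\beta$, and they satisfy $\varphi_{K,H}\circ \pi_K = \pi_H$ whenever $H \leq K$. Dualizing, each $\pi_K^{\ast} : C(G/K)\to C(\G)$ is an injective $\ast$-homomorphism intertwining $\sigma^K$ and $\sigma$, and the relation $\pi_K^{\ast}\circ \varphi_{K,H}^{\ast} = \pi_H^{\ast}$ holds. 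By the universal property of the inductive limit in the category of $G$-C*-algebras, this compatible equivariant family produces a canonical $G$-equivariant $\ast$-homomorphism
\[
\Phi : \varinjlim_{\I_G}\bigl(C(G/H),\sigma^H\bigr) \longrightarrow (C(\G),\sigma)
\]
whose restriction to the canonical image of $C(G/K)$ agrees with $\pi_K^{\ast}$.

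It then remains to show that $\Phi$ is bijective. Since each $\varphi_{K,H}$ is surjective, every connecting map $\varphi_{K,H}^{\ast}$ is injective (hence isometric), so the canonical maps into the inductive limit are themselves isometric; combined with the fact that each $\pi_K^{\ast}$ is isometric, this forces $\Phi$ to be isometric on a dense $\ast$-subalgebra of the limit, and therefore globally isometric and injective. For surjectivity I would appeal to the Stone--Weierstrass theorem applied to
\[
A := \bigcup_{K \in \I_G}\pi_K^{\ast}\bigl(C(G/K)\bigr),
\]
which is a unital $\ast$-subalgebra of $C(\G)$ because $\I_G$ is upward directed under reverse inclusion (the intersection $H_1\cap H_2$ dominates both $H_i$). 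By \cref{prop_profinite_completion}(2), distinct points $x\neq y$ in $\G$ are separated at some finite level, i.e.\ $\pi_K(x)\neq \pi_K(y)$ for some $K\in \I_G$; any $f\in C(G/K)$ distinguishing these cosets pulls back to an element of $A$ separating $x$ and $y$. Stone--Weierstrass then yields $\overline{A} = C(\G)$, so $\Phi$ has dense image and is therefore surjective.

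The only real subtlety is keeping track of the equivariance; without it, the statement reduces to Gelfand duality applied to the inverse limit description of $\G$ in \cref{prop_profinite_completion}. Since the actions $\sigma^H$ and $\sigma$ are obtained from $\beta^H$ and $\beta$ by contravariant functoriality, and since both the bonding maps $\varphi_{K,H}$ and the projections $\pi_K$ are $G$-equivariant by construction, equivariance of $\Phi$ follows automatically from the universal property without any additional verification.
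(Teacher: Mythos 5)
Your proof is correct and takes essentially the same route as the paper, which derives the lemma directly from Gelfand duality applied to $\G = \varprojlim_{\I_G} G/H$ together with the observation that the maps $\pi_K^{\ast}$ and $\varphi_{K,H}^{\ast}$ are $G$-equivariant. Your argument merely supplies the standard details (directedness of $\I_G$, isometry of the connecting maps, and Stone--Weierstrass for surjectivity) that the paper leaves implicit.
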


Now if $G$ is a finitely generated group and $n\in \N$ is fixed, then by a theorem of Hall \cite{hall}, $G$ has finitely many subgroups of index $n$. In other words, $\I_G$ is countable. Since $\G$ is a subspace of $\prod_{H \in \I_G} G/H$, it is metrizable. We record this fact for later use.

\begin{lemma}\label{lem_finitely_generated_metrizable}
If $G$ is a discrete, finitely generated, residually finite group, then $\G$ is metrizable, and therefore $C(\G)$ is separable.
\end{lemma}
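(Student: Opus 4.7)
The statement has two conclusions: $\G$ is metrizable, and $C(\G)$ is separable. My plan is to reduce both to the countability of $\I_G$.

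First I would establish that $\I_G$ is countable. The paragraph preceding the lemma already invokes Hall's theorem to conclude that $G$ has only finitely many subgroups of each finite index $n$. Every normal subgroup of finite index has some specific index $n \in \N$, so $\I_G = \bigcup_{n \in \N} \{H \in \I_G : [G:H] = n\}$ is a countable union of finite sets, hence countable.

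Next I would use this to metrize $\G$. By \cref{prop_profinite_completion}, $\G$ is a subspace of $\prod_{H \in \I_G} G/H$, which is a countable product of finite (hence metrizable) spaces. A countable product of metrizable spaces is metrizable (e.g.\ with the standard formula
\[
d\bigl((x_H), (y_H)\bigr) = \sum_{n=1}^\infty 2^{-n}\, d_{H_n}(x_{H_n}, y_{H_n}),
\]
after enumerating $\I_G = \{H_1, H_2, \ldots\}$ and endowing each finite quotient $G/H_n$ with the discrete metric bounded by $1$). Therefore $\G$, being a subspace, is metrizable.

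For the second conclusion, I would combine this with the compactness of $\G$ from \cref{prop_profinite_completion}(3): a compact metric space is second countable, and $C(X)$ is separable for any compact metric space $X$ (for instance, by Stone--Weierstrass applied to the countable subalgebra generated by a countable dense subset of a separating family of continuous functions). Hence $C(\G)$ is separable.

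No step here is genuinely hard; the only non-trivial input is Hall's theorem, which the paper already cites. The ``main obstacle'' is really just bookkeeping: being explicit about why the countability of $\I_G$ is enough, since $\G$ sits inside the full product rather than being equal to it, and why metrizability plus compactness of $\G$ forces separability of $C(\G)$.
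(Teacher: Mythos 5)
Your proposal is correct and follows the same route as the paper: Hall's theorem gives countability of $\I_G$, metrizability of $\G$ then follows from its being a subspace of the countable product $\prod_{H\in\I_G} G/H$, and separability of $C(\G)$ follows from compactness plus metrizability. You have merely filled in details the paper leaves implicit.
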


\subsection{The Topological Join}

Given two compact Hausdorff spaces $X$ and $Y$, the topological join of $X$ and $Y$ is defined as
\[
X\ast Y := ([0,1]\times X \times Y)/\sim
\]
where $\sim$ is the equivalence relation defined by $(0,x,y) \sim (0,x',y)$ and $(1,x,y)\sim (1,x,y')$ for all $x,x'\in X$ and $y,y'\in Y$. Elements of $X\ast Y$ are denoted by the symbol $[t,x,y]$ for the equivalence class containing $(t,x,y)$. \\

Given three compact Hausdorff spaces $X, Y$ and $Z$, we may also define $(X\ast Y)\ast Z$ and $X\ast (Y\ast Z)$ as above. Since all spaces are compact and Hausdorff, these two spaces are naturally homeomorphic, so the join operation is associative. Thus if $X_1, X_2, \ldots, X_n$ are compact Hausdorff spaces, then $X_1\ast X_2\ast \ldots \ast X_n$ may be defined unambiguously. If $X_i = X$ for all $1\leq i\leq n$, we denote the space $X_1\ast X_2\ast \ldots \ast X_n$ by $X^{\ast(n)}$. \\

Now suppose $G$ is a discrete group that acts on both spaces $X$ and $Y$, then there is a natural diagonal action of $G$ on $X\ast Y$ given by $g\cdot [t,x,y] := [t,gx, gy]$. Moreover, this action is free if each individual action is free. In particular, if $G$ acts freely on a compact Hausdorff space $X$ by an action $\gamma : G\curvearrowright X$, then there is an induced action $\gamma^{(n)} : G\curvearrowright X^{\ast(n)}$ that is also free. \\

Our immediate goal is to give an estimate for the covering dimension of the join of two spaces. Henceforth, we write $\dim(Z)$ to denote the covering dimension of a compact Hausdorff space $Z$. Observe that if $X$ is a compact Hausdorff space, then $\mathcal{C}X := X\ast \{p\}$ is the cone over $X$. We denote the points in $\mathcal{C}X$ by $[t,x]$ (by suppressing the $p$).

\begin{prop}\label{prop_dimension_join}
For any two compact Hausdorff spaces $X$ and $Y$,
\[
\dim(X\ast Y) \leq 1 + \dim(X) + \dim(Y).
\]
In particular, if $\dim(X) = 0$, then $\dim(X^{\ast(n)}) \leq n-1$.
\end{prop}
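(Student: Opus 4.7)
The plan is to cover $X \ast Y$ by two closed halves meeting along a copy of $X \times Y$, bound the dimension of each half by a countable-union argument, and combine via the finite sum theorem for covering dimension. I would appeal to three standard inputs from dimension theory: the finite closed sum theorem $\dim(F_1 \cup F_2) \le \max(\dim F_1, \dim F_2)$, its countable version, and the product bound $\dim(P \times Q) \le \dim P + \dim Q$, each of which is classical in the compact metric setting that is most relevant for this paper (in light of \autoref{lem_finitely_generated_metrizable}).

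Concretely, set $A := \{[t,x,y] \in X \ast Y : 0 \le t \le 1/2\}$ and $B := \{[t,x,y] \in X \ast Y : 1/2 \le t \le 1\}$; these are closed and cover $X \ast Y$. Inside $A$, let $A_0 := \{[0,x,y] : x \in X,\, y \in Y\}$, which is homeomorphic to $Y$ since the only identifications at $t = 0$ collapse $X$. For each $n \ge 1$, let $A_n := \{[t,x,y] \in A : 1/(n+1) \le t \le 1/2\}$; no identifications occur in this range, so $A_n \cong [1/(n+1),\, 1/2] \times X \times Y$, and $\dim A_n \le 1 + \dim X + \dim Y$ by the product theorem. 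Since $A_0$ and each $A_n$ is closed in $A$ and $A = A_0 \cup \bigcup_{n \ge 1} A_n$, the countable sum theorem yields $\dim A \le 1 + \dim X + \dim Y$. The symmetric argument for $B$ (collapsing $Y$ at $t = 1$) gives the same bound, and then the finite sum theorem applied to $X \ast Y = A \cup B$ completes the proof of the main inequality.

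The corollary follows by induction on $n$. The base case $n = 1$ reads $\dim X \le 0$, which is the hypothesis. For the inductive step, assuming $\dim X^{\ast(n)} \le n - 1$, the main inequality applied to $X^{\ast(n+1)} = X^{\ast(n)} \ast X$ gives $\dim X^{\ast(n+1)} \le 1 + (n-1) + 0 = n$, as required. The main obstacle I foresee is essentially bookkeeping: the countable closed sum theorem is standard for compact metric (more generally, perfectly normal) spaces but is more delicate in the fully general compact Hausdorff setting. To sidestep this one could, in full generality, argue directly by refining an arbitrary finite open cover of $X \ast Y$ — splitting off a small collar neighborhood $\{t < \varepsilon\}$ of the cone-point fiber over $Y$, refining on the compact product region $\{t \in [\varepsilon, 1-\varepsilon]\} \cong [\varepsilon, 1-\varepsilon] \times X \times Y$, and a symmetric collar at $t = 1$ — but this is a cosmetic variation on the argument above.
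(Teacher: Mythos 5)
Your argument is correct and is essentially the paper's: both proofs slice the join into pieces homeomorphic to $[a,b]\times X\times Y$ (plus degenerate fibers), bound each piece by the product theorem, and assemble via the countable and finite closed sum theorems, the only cosmetic difference being that the paper first identifies $X\ast Y$ with $\mathcal{C}X\times Y\cup X\times\mathcal{C}Y$ inside $\mathcal{C}X\times\mathcal{C}Y$ while you slice the join directly. Your worry about the countable closed sum theorem in the non-metrizable setting is unfounded: it holds for all normal (in particular compact Hausdorff) spaces, and is exactly the result from Pears that the paper invokes.
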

\begin{proof}
With a slight abuse of notation, we write $\mathcal{C}Y := ([0,1]\times Y)/\sim$ where $\sim$ is the equivalence relation $(1,y)\sim (1,y')$ for all $y,y'\in Y$. In other words, $\mathcal{C}X = X\ast \{p\}$ while $\mathcal{C}Y := \{p\}\ast Y$. Let $Z := \mathcal{C}X\times Y \cup X\times \mathcal{C}Y \subset \mathcal{C}X\times \mathcal{C}Y$ and define $g : [0,1]\times X\times Y \to Z$ by
\[
g(t,x,y) := \begin{cases}
([2t,x],y) &: \text{ if } 0\leq t\leq \frac{1}{2} \\
(x, [2t-1,y]) &: \text{ if } \frac{1}{2} \leq t \leq 1.
\end{cases}
\]
Note that $g$ is well-defined and continuous, and descends to a continuous map $f:X\ast Y \to Z$. That $f$ is bijective is easy to check. Since $X\ast Y$ is compact and $Z$ is Hausdorff, $f$ is a homeomorphism. Hence, it suffices to estimate $\dim(Z)$.

For each $n\in \N$, let $A_n = \{[t,x] \in \mathcal{C}X : 1/n\leq t\leq 1, x\in X\}$. Then $A_n$ is homeomorphic to $[1/n,1]\times X$ so $\dim(A_n) \leq \dim(X) + 1$ by the product theorem \cite[Proposition III.2.6]{pears}. Since
\[
\mathcal{C}X = \{p\}\sqcup\left( \bigcup_{n=1}^{\infty} A_n\right),
\]
it follows that $\dim(\mathcal{C}X) \leq \dim(X) + 1$ by \cite[Proposition III.5.3]{pears}. By the product theorem, $\dim(\mathcal{C}X\times Y) \leq 1 + \dim(X) + \dim(Y)$. By symmetry, $\dim(X\times \mathcal{C}Y) \leq 1 + \dim(X) + \dim(Y)$. Moreover, both $A = \mathcal{C}X\times Y$ and $B = X\times \mathcal{C}Y$ are $F_{\sigma}$-sets in $\mathcal{C}X\times \mathcal{C}Y$. By \cite[Proposition III.5.3]{pears}, $\dim(Z) \leq \dim(X) + \dim(Y) + 1$. 
\end{proof}

\subsection{Notational Conventions}
For convenience, we now fix some notation that we will use frequently through the rest of the paper. Henceforth, all groups (denoted $G, H, K$, etc.) will be countable and discrete, and we will write $e$ for the identity of the group. We write $H\lf G$ if $H$ is a subgroup of $G$ of finite index, we write $H\< G$ if $H$ is a normal subgroup of $G$, and we write $H\nf G$ if $H$ is a normal subgroup of finite index.  \\

If $Z$ is a topological space, we write $\dim(Z)$ for its Lebesgue covering dimension. If a group $G$ acts on $Z$ by homeomorphisms, we denote this by $G\curvearrowright Z$ or $\beta : G\curvearrowright Z$ if $\beta : G\to \text{Homeo}(Z)$ denotes the corresponding homomorphism. If $K$ is a set, then we write $F\ssubset K$ if $F$ is a finite subset of $K$. If $A$ is a C*-algebra and $a,b\in A$, we write $[a,b] := ab-ba$, and we write $a\approx_{\epsilon} b$ if $\|a-b\| < \epsilon$. If $A$ is unital, we write $1_A$ for the unit of $A$.

\section{Rokhlin Dimension for actions of Residually Finite groups}\label{sec_definition}

In this section, we recall from \cite{szabo_wu_zacharias} the definition of Rokhlin dimension for actions of residually finite groups. We then give an alternate definition using the profinite completion of the group, and we discuss the notion of Rokhlin dimension with commuting towers. Given a natural number $d\in \N$, we describe explicitly a universal space for actions with Rokhlin dimension $d$ with commuting towers. This last result is a natural analogue of the corresponding result for compact groups due to Gardella \cite[Lemma 4.3]{gardella_compact}. We begin by describing the central sequence algebra relative to a C*-subalgebra, a notion due to Kirchberg \cite{kirchberg}.

\begin{definition}
Given a C*-algebra $A$, let $\ell^{\infty}(\N,A)$ be the space of all bounded sequences in $A$ and $c_0(\N,A)$ be the subspace of sequences that vanish at infinity. If $A_{\infty} := \ell^{\infty}(\N,A)/c_0(\N,A)$, then $A$ embeds in $A_{\infty}$  as the set of all constant sequences so we identify $A$ with its image in $A_{\infty}$. For a C*-subalgebra $D\subset A$, we define 
\begin{eqsplit}
A_\infty\cap D' &= \{ x\in A_{\infty} : xd=dx\text{ for all }d\in D \} \text{ and }\\
\Ann(D,A_\infty) &= \{ x\in A_{\infty} : xd=dx=0\text{ for all }d\in D\}.
\end{eqsplit}
$\Ann(D,A_{\infty})$ is an ideal in $A_{\infty}\cap D'$, so we write
\[
F(D,A) := (A_{\infty} \cap D')/\Ann(D, A_{\infty})
\]
and $\kappa_{D,A} : A_{\infty}\cap D'\to F(D,A)$ for the corresponding quotient map. When $D = A$, we write $F(A)$ for $F(A,A)$ and $\kappa_A$ for $\kappa_{A,A}$. Note that $F(D,A)$ is unital if $D$ is $\sigma$-unital.
\end{definition}

Let $G$ be a discrete group and $\alpha : G\to \Aut(A)$ be an action of $G$ on a C*-algebra $A$. If $D$ is an $\alpha$-invariant subalgebra of $A$, there is a natural induced action of $G$ on $A_{\infty}$ and on $F(D,A)$ which we denote by $\alpha_{\infty}$ and $\widetilde{\alpha}_{\infty}$ respectively. Moreover, there is a $G$-equivariant $\ast$-homomorphism $(F(D,A)\otimes_{\max} D, \widetilde{\alpha}_{\infty}\otimes \alpha\lvert_D) \to (A_{\infty}, \alpha_{\infty})$ given on elementary tensors by $\kappa_{D,A}(x)\otimes a \to x\cdot a$. Under this $\ast$-homomorphism $1_{F(D,A)}\otimes a$ is mapped to $a$ for all $a\in D$, so we think of it as a way to multiply elements of $F(D,A)$ with elements of $D$ to obtain elements of $A_{\infty}$ (in a way that respects the action of $G$). \\

We need one last piece of terminology. Two elements $a$ and $b$ in a C*-algebra are said to be orthogonal (in symbols, we write $a\perp b$) if $ab = ba = a^{\ast}b = b^{\ast}a=0$. A contractive, completely positive map $\varphi : A\to B$ between two C*-algebras is said to have order zero if $\varphi(a)\perp \varphi(b)$ whenever $a\perp b$. As is customary, we will use the abbreviation `c.c.p' for `contractive and completely positive'. \\

With all this in place, we are now in a position to define Rokhlin dimension for actions of residually finite groups. Following \cite{szabo_wu_zacharias}, we first define Rokhlin dimension relative to a subgroup of finite index before defining the full Rokhlin dimension for an action.

\begin{definition}\cite[Definition 5.4]{szabo_wu_zacharias}
Let $A$ be a C*-algebra, $G$ be a discrete, countable group, $H$ be a subgroup of $G$ of finite index, and $\alpha:G\to \Aut(A)$ be an action of $G$ on $A$. We say that $\alpha$ has Rokhlin dimension $d$ relative to $H$ if $d$ is the least integer such that for any separable, $\alpha$-invariant C*-subalgebra $D\subset A$, there exist $(d+1)$ equivariant c.c.p. order zero maps
\[
\varphi_0, \varphi_1, \ldots, \varphi_d :(C(G/H),\sigma^H)\to (F(D,A),\widetilde{\alpha}_{\infty})
\]
such that $\sum_{\ell=0}^{d}\varphi_{\ell}(1_{C(G/H)})=1_{F(D,A)}$. We denote the Rokhlin dimension of $\alpha$ relative to $H$ by $\dr(\alpha, H)$. If no such integer $d$ exists, then we write $\dr(\alpha, H) = +\infty$.
\end{definition}

The following lemma is contained in \cite[Proposition 5.5]{szabo_wu_zacharias}.

\begin{lemma}\label{lem_rokhlin_system_finite_index}
Let $A$ be a C*-algebra, $G$ be a discrete, countable group, $H$ be a subgroup of $G$ of finite index, and $\alpha:G\to \Aut(A)$ be an action of $G$ on $A$. Then $\dr(\alpha, H)\leq d$ if and only if for each $F\ssubset A, M\ssubset G, S\ssubset C(G/H)$ and $\epsilon>0$, there exist $(d+1)$ c.c.p. maps 
\[
\psi_0, \psi_1, \ldots, \psi_d : C(G/H) \to A
\]
satisfying the following properties:
\begin{enumerate}
\item $[\psi_{\ell}(f),a] \approx_{\epsilon} 0$ for all $a\in F, f\in S$ and $0\leq \ell\leq d$.
\item $\psi_{\ell}(\sigma^H_g(f))a \approx_{\epsilon} \alpha_g(\psi_{\ell}(f))a$ for all $a\in F, f\in S, g\in M$ and $0\leq \ell\leq d$.
\item $\psi_{\ell}(f_1)\psi_{\ell}(f_2)a \approx_{\epsilon} 0$ for all $a\in F$ and $f_1, f_2\in S$ such that $f_1\perp f_2$ and for all $0\leq \ell\leq d$.
\item $\sum_{\ell=0}^d \psi_{\ell}(1_{C(G/H)})a \approx_{\epsilon} a$ for all $a\in F$.
\end{enumerate}
\end{lemma}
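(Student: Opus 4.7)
The plan is to prove both implications by the standard reindexing argument that translates between exact central-sequence maps into $F(D,A)$ and approximate relations at the level of $A$.

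For the forward direction, assume $\dr(\alpha,H)\leq d$ and let finite data $F\ssubset A$, $M\ssubset G$, $S\ssubset C(G/H)$ and $\epsilon>0$ be given. I would let $D$ be the separable, $\alpha$-invariant C*-subalgebra of $A$ generated by $F$, and obtain equivariant c.c.p.\ order zero maps $\varphi_0,\dots,\varphi_d:C(G/H)\to F(D,A)$ with $\sum_\ell\varphi_\ell(1)=1_{F(D,A)}$. Since $C(G/H)\cong\C^{|G/H|}$ is finite-dimensional (in particular nuclear and projective as a c.c.p.\ map source), the Choi--Effros lifting theorem gives a c.c.p.\ lift $\widetilde{\varphi}_\ell:C(G/H)\to A_\infty\cap D'$ of $\varphi_\ell$ through $\kappa_{D,A}$, and $\widetilde{\varphi}_\ell$ is in turn represented by a sequence of c.c.p.\ maps $\psi_\ell^{(n)}:C(G/H)\to A$. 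The commutation-with-$D$ condition, the equivariance, the order zero property, and the unitality all hold \emph{exactly} in $F(D,A)$, hence \emph{approximately modulo} $\Ann(D,A_\infty)$ at the level of $A_\infty\cap D'$; multiplying by $a\in F\subset D$ annihilates the error, so a sufficiently large index $n$ furnishes maps satisfying conditions (1)--(4).

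For the reverse direction, suppose the approximate conditions hold and fix any separable $\alpha$-invariant $D\subset A$. I would enumerate countable dense subsets of $D$ and of $C(G/H)$ (the latter is finite-dimensional, so in fact $S_n$ can be taken to be the finite basis $\{\mathbf{1}_{gH}\}_{gH\in G/H}$) and an exhaustion $M_n\ssubset G$, and for each $n$ invoke the hypothesis with tolerance $\epsilon_n=1/n$ on data $F_n,M_n,S_n$ to produce c.c.p.\ maps $\psi_0^{(n)},\dots,\psi_d^{(n)}:C(G/H)\to A$. Setting
\[
\varphi_\ell(f):=\kappa_{D,A}\bigl([(\psi_\ell^{(n)}(f))_n]\bigr),
\]
condition (1) ensures $[(\psi_\ell^{(n)}(f))_n]$ commutes with every element of $D$ in $A_\infty$ (so the sequence lies in $A_\infty\cap D'$), while conditions (2), (3), (4) translate, after passing to the quotient by $\Ann(D,A_\infty)$, to the equivariance $\varphi_\ell\circ\sigma^H_g=\widetilde{\alpha}_{\infty,g}\circ\varphi_\ell$, the order zero relation $\varphi_\ell(f_1)\varphi_\ell(f_2)=0$ for $f_1\perp f_2$, and $\sum_\ell\varphi_\ell(1)=1_{F(D,A)}$ respectively. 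Complete positivity and contractivity of $\varphi_\ell$ are inherited from those of the $\psi_\ell^{(n)}$.

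The only genuinely delicate step is the lifting in the forward direction: one must know that an equivariant c.c.p.\ order zero map into $F(D,A)$ can actually be represented by a sequence of c.c.p.\ maps $\psi_\ell^{(n)}:C(G/H)\to A$. This is where the finite-dimensionality of $C(G/H)$ matters: the Choi--Effros theorem applies in this setting and allows us to produce the representing sequence, with the order zero and equivariance properties automatically becoming approximate at the sequence level. Everything else is bookkeeping with the ideal $\Ann(D,A_\infty)$ and the observation that multiplication by an element of $D$ converts "equality modulo $\Ann(D,A_\infty)$" into genuine norm approximation in $A$.
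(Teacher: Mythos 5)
Your argument is correct and is essentially the standard reindexing/lifting argument that the paper itself defers to by citing \cite[Proposition 5.5]{szabo_wu_zacharias} without giving a proof: your two directions (lifting the exact relations in $F(D,A)$ to approximate relations in $A$ using liftability of c.c.p.\ maps out of the finite-dimensional algebra $C(G/H)$, with multiplication by $a\in D$ converting ``equality modulo $\Ann(D,A_\infty)$'' into norm approximation, and conversely assembling an element of $A_\infty\cap D'$ by a diagonal sequence over exhaustions $F_n, M_n$ with tolerance $1/n$) are exactly what that reference does. One cosmetic remark: invoking the Choi--Effros theorem is more than is needed here, since c.c.p.\ maps out of a finite-dimensional C*-algebra lift through any surjection of C*-algebras by an elementary argument, and, as you note, checking the order zero condition on the basis $\{\delta_{\overline{s}}\}$ suffices.
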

The set of maps $\{\psi_0, \psi_1, \ldots, \psi_d\}$ satisfying the conditions of this lemma is called an $(H, d, F, M, S, \epsilon)$-Rokhlin system.

\begin{definition}\cite[Definition 5.8]{szabo_wu_zacharias}
Let $A$ be a C*-algebra, $G$ be a residually finite group, and $\alpha:G\to \Aut(A)$ be an action of $G$ on $A$. We define the Rokhlin dimension of $\alpha$ as
\[
\dr(\alpha) = \sup\{ \dr(\alpha, H):  H\lf G\}.
\]
\end{definition}

The next simple proposition will be used repeatedly throughout the rest of the paper. As such, it introduces the profinite completion of $G$ into the picture, thereby allowing us to treat discrete groups and compact groups on an (almost) equal footing.

\begin{prop}\label{prop_rokhlin_system}
Let $A$ be a C*-algebra, $G$ be a residually finite group and $\alpha:G\to \Aut(A)$ be an action of $G$ on $A$. Then $\dr(\alpha)\leq d$ if and only if for any $F\ssubset A, M\ssubset G, S\ssubset C(\G)$ and any $\epsilon>0$ there exist $(d+1)$ c.c.p. maps
\[
\psi_0, \psi_1, \ldots, \psi_d :C(\G)\to A
\]
satisfying the following properties:
\begin{enumerate}
\item $[\psi_{\ell}(f),a]\approx_{\epsilon} 0$ for all $a\in F, f\in S$ and $0\leq \ell\leq d$.
\item $\psi_{\ell}(\sigma_g(f))a \approx_{\epsilon} \alpha_g(\psi_{\ell}(f))a$ for all $a\in F, f\in S, g\in M$ and $0\leq \ell\leq d$.
\item $\psi_{\ell}(f_1)\psi_{\ell}(f_2)a\approx_{\epsilon} 0$ for all $a\in F$ and $f_1, f_2\in S$ such that $f_1\perp f_2$ and all $0\leq \ell\leq d$.
\item $\sum_{\ell=0}^d \psi_{\ell}(1_{C(\bar{G})})a\approx_{\epsilon} a$ for all $a\in F$.
\end{enumerate}
\end{prop}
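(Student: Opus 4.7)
The proof transfers Rokhlin systems between the full profinite completion $\G$ and the individual quotients $G/H$, using \cref{lem_inductive_limit_profinite_completion} to identify $C(\G) = \varinjlim_{\I_G} C(G/H)$ and \cref{lem_rokhlin_system_finite_index} as the finite-index version of the characterization.

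For the ``if'' direction, fix $H \lf G$, possibly non-normal. Set $N := \bigcap_{g \in G} gHg^{-1}$, which is normal in $G$ of finite index and satisfies $N \subset H$, so $N \nf G$. The $G$-equivariant surjections $\G \to G/N \to G/H$ induce a unital $G$-equivariant $*$-embedding $\iota : C(G/H) \hookrightarrow C(\G)$. Given data $(F, M, \tilde{S}, \epsilon)$ for \cref{lem_rokhlin_system_finite_index}, apply the hypothesis to $S := \iota(\tilde{S}) \subset C(\G)$ to obtain $\psi_0, \ldots, \psi_d : C(\G) \to A$, and set $\psi_\ell' := \psi_\ell \circ \iota$. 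The four conditions transfer since $\iota$ is a unital $G$-equivariant $*$-homomorphism (preserving orthogonality, equivariance, and the unit), yielding $\dr(\alpha, H) \leq d$ for every $H \lf G$, hence $\dr(\alpha) \leq d$.

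For the ``only if'' direction, given $(F, M, S, \epsilon)$, choose $\delta > 0$ small. By uniform continuity of the finite set $S$ on the compact group $\G$, together with the fact that $\{\ker(\pi_H)\}_{H \in \I_G}$ is a neighborhood base at $e \in \G$ (\autoref{prop_profinite_completion}), there exists $H \in \I_G$ such that each $f \in S$ varies by less than $\delta$ on every fiber of $\pi_H : \G \to G/H$. Choose representatives $y_x \in \pi_H^{-1}(\{xH\})$ for each $x \in G/H$ and define the unital $*$-homomorphism
\[
Q : C(\G) \to C(G/H), \qquad Q(f)(x) := f(y_x).
\]
As a $*$-homomorphism, $Q$ preserves the unit and the orthogonality of elements of $S$ exactly. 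Furthermore $\|\pi_H^*(Q(f)) - f\| < \delta$, and for $f \in S, g \in M$ one has $\|Q(\sigma_g(f)) - \sigma_g^H(Q(f))\| < \delta$, since for each $x \in G/H$ both $g^{-1}y_x$ and $y_{g^{-1}x}$ lie in the fiber of $\pi_H$ over $g^{-1}x$.

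Applying $\dr(\alpha, H) \leq d$ through \cref{lem_rokhlin_system_finite_index} with data $(F, M, Q(S), \epsilon')$ for $\epsilon'$ small enough, we obtain c.c.p. maps $\psi_\ell' : C(G/H) \to A$; set $\psi_\ell := \psi_\ell' \circ Q$, which is c.c.p. Conditions (1), (3), and (4) for $\psi_\ell$ follow directly from the corresponding conditions for $\psi_\ell'$ together with $Q(1_{C(\G)}) = 1_{C(G/H)}$ and the orthogonality preservation. Condition (2) acquires an extra error of $\delta \cdot \max_{a \in F}\|a\|$ from the approximate equivariance of $Q$, so choosing $\delta$ and $\epsilon'$ small enough makes all total errors $< \epsilon$. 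The main subtle point is the trade-off in constructing $Q$: point-evaluation gives a $*$-homomorphism that preserves orthogonality exactly (needed for condition (3)) but only approximately intertwines the actions; the natural alternative, the conditional expectation $E_H : C(\G) \to C(G/H)$, is genuinely $G$-equivariant but destroys exact orthogonality and would require an extra perturbation step in $C(G/H)$.
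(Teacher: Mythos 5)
Your proof is correct. The ``if'' direction is essentially the paper's: both pull back a Rokhlin system along a unital $G$-equivariant $\ast$-homomorphism $C(G/H)\to C(\G)$ (you are more careful than the paper in routing through the normal core when $H$ is not normal, which is a worthwhile clarification since $\pi_H$ is only defined in \autoref{prop_profinite_completion} for $H\in\I_G$). The ``only if'' direction, however, takes a genuinely different route. The paper approximates $S$ into $\pi_H^{\ast}(C(G/H))$, builds the Rokhlin system on $C(G/H)$, and then \emph{extends} the maps to all of $C(\G)$ by factoring them approximately through matrix algebras (nuclearity of $C(G/H)$) and applying Arveson's extension theorem. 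You instead \emph{precompose}: you construct a unital $\ast$-homomorphism $Q : C(\G)\to C(G/H)$ by point evaluation on fibers of $\pi_H$, chosen so that $Q$ moves each $f\in S$ by at most $\delta$ and intertwines $\sigma$ and $\sigma^H$ up to $\delta$ on $S$, and then set $\psi_{\ell} := \psi_{\ell}'\circ Q$. Your approach is more elementary (no nuclearity, no Arveson) and is cleaner on one point the paper glosses over: since $Q$ is an exact $\ast$-homomorphism, orthogonal pairs $f_1\perp f_2$ in $S$ map to exactly orthogonal pairs in $Q(S)$, so condition (3) transfers with no fuss, whereas the paper's initial reduction ``we may assume $S\subset\pi_H^{\ast}(C(G/H))$'' replaces orthogonal functions by merely approximately orthogonal ones and leaves that repair implicit. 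The price you pay is the extra $\delta$-error in the equivariance condition (2), which you correctly identify and absorb. Both arguments are valid; yours is self-contained and arguably preferable for this particular statement, though the paper's extension technique is the one that generalizes when one needs genuinely equivariant maps into $F(D,A)$ as in \autoref{cor_rokhlin_dim_equiv_defn}.
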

The set of maps $\{\psi_0, \psi_1, \ldots, \psi_d\}$ satisfying the conditions of this proposition is called a $(d, F,M,S,\epsilon)$-Rokhlin system.
\begin{proof}
Assume that $\dr(\alpha) \leq d$, and let $F\ssubset A, M\ssubset G, S\ssubset C(\G)$ and $\epsilon>0$ be given. By \autoref{lem_inductive_limit_profinite_completion}, $C(\G) \cong \lim_{\I_G} C(G/H)$, so by approximating, we may assume that $S\subset \pi_H^{\ast}(C(G/H))$ for some $H \in \I_G$. Write $S = \pi_H^{\ast}(S')$ for some finite set $S'\subset C(G/H)$ and we may further assume that $1_{C(G/H)} \in S'$, that $e\in M$ and that $\|a\| \leq 1$ for all $a\in F$. By \autoref{lem_rokhlin_system_finite_index}, we obtain $(d+1)$ c.c.p. maps
\[
\widetilde{\psi}_0, \widetilde{\psi}_1, \ldots, \widetilde{\psi}_d : C(G/H) \to A
\]
which form an $(H, d, F, M, S', \epsilon/3)$-Rokhlin system. For each $0\leq \ell\leq d$, since $C(G/H)$ is nuclear, there is a sequence of c.c.p. maps $\rho^i_{\ell} : C(G/H)\to M_{k(i)}(\C)$ and $\theta^i_{\ell} : M_{k(i)}(\C)\to A$ such that
\[
\lim_{i\to \infty} \theta^i_{\ell}\circ \rho^i_{\ell}(f) = \widetilde{\psi}_{\ell}(f)
\]
for all $f\in C(G/H)$. Since $\pi_H^{\ast}$ is injective, we may use Arveson's extension theorem to obtain c.c.p. maps $\overline{\rho}^i_{\ell} : C(\G) \to M_{k(i)}(\C)$ such that $\overline{\rho}^i_{\ell}\circ \pi_H^{\ast} = \rho^i_{\ell}$. Then there exists $i_0\in \N$ such that
\[
\left\|\theta^{i_0}_{\ell}(\overline{\rho}^{i_0}_{\ell}((\pi_H^{\ast}(\sigma^{H}_g(f))))) - \widetilde{\psi}_{\ell}(\sigma_g(f))\right\| < \frac{\epsilon}{3(d+1)}
\]
for all $f\in S'$ and $g\in M$. If $\psi_{\ell} : C(\G) \to A$ is given by $\psi_{\ell} := \theta^{i_0}_{\ell}\circ \overline{\rho}^{i_0}_{\ell}$, then we claim that the maps $\{\psi_0, \psi_1, \ldots, \psi_d\}$ satisfy the required conditions. For the sake of brevity, we verify only condition (4) as the others are similar: If $a\in F$, then
\[
\sum_{\ell=0}^d \psi_{\ell}(1_{C(\G)})a = \sum_{\ell=0}^d \theta^{i_0}_{\ell}\circ \overline{\rho}^{i_0}_{\ell} \circ \pi_H^{\ast}(1_{C(G/H)})a \approx_{\frac{\epsilon}{3}} \sum_{\ell=0}^d \widetilde{\psi}_{\ell}(1_{C(G/H)})a \approx_{\frac{\epsilon}{3}} a.
\]
With the other conditions verified, we conclude that $\{\psi_0, \psi_1, \ldots,\psi_d\}$ form a $(d,F,M,S,\epsilon)$-Rokhlin system. \\

For the converse, choose a subgroup $H\lf G$ and let $\pi_H : \G\to G/H$ be the natural map from \autoref{prop_profinite_completion}. Then $\pi_H^{\ast} : C(G/H)\to C(\G)$ is a unital $G$-equivariant $\ast$-homomorphism. If $F\ssubset A, M\ssubset G, S\ssubset C(G/H)$ and $\epsilon > 0$ are given, then by hypothesis, there exist $(d+1)$ c.c.p. maps
\[
\psi_0, \psi_1, \ldots, \psi_d : C(\G) \to A
\]
satisfying the four conditions listed above for the tuple $(d, F, M, \pi_H^{\ast}(S), \epsilon)$. Then, $\{\psi_{\ell} \circ \pi_H^{\ast} : 0\leq \ell \leq d\}$ clearly forms a $(H,d, F,M,S, \epsilon)$-Rokhlin system as in \autoref{lem_rokhlin_system_finite_index}, proving that $\dr(\alpha, H)\leq d$. This is true for each $H\lf G$, so $\dr(\alpha) \leq d$.
\end{proof}

In order to turn the previous proposition into a global statement (without reference to finite sets and $\epsilon$'s), we need to assume that $G$ is finitely generated to ensure that $C(\G)$ is separable.

\begin{cor}\label{cor_rokhlin_dim_equiv_defn}
Let $A$ be a C*-algebra, $G$ be a finitely generated, residually finite group and $\alpha:G\to \Aut(A)$ be an action of $G$ on $A$. Then $\dr(\alpha) \leq d$ if and only if, for each separable, $\alpha$-invariant C*-subalgebra $D\subset A$, there exist $(d+1)$ equivariant, order zero, c.c.p. maps
\[
\varphi_0, \varphi_1, \ldots, \varphi_d : (C(\G), \sigma) \to (F(D,A),\widetilde{\alpha}_{\infty})
\]
such that $\sum_{\ell=0}^d \varphi_{\ell}(1_{C(\G)}) = 1_{F(D,A)}$.
\end{cor}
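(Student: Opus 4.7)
My strategy is to interpret the equivariant order-zero maps into $F(D, A)$ as coherent sequences of approximate Rokhlin systems into $A$, thereby deriving the corollary from \autoref{prop_rokhlin_system} applied in the sequence algebra. The hypothesis that $G$ be finitely generated enters only through \autoref{lem_finitely_generated_metrizable}, which guarantees that $C(\G)$ is separable; separability together with nuclearity of $C(\G)$ is what makes the Choi--Effros lifting theorem available in the reverse direction.

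\medskip

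For the forward direction, fix a separable $\alpha$-invariant subalgebra $D \subset A$. Choose countable dense sequences $(d_n)_n$ in $D$ and $(f_n)_n$ in $C(\G)$, and enumerate $G = \{g_n\}_{n \in \N}$. For each $n$, apply \autoref{prop_rokhlin_system} with $F = \{d_1, \ldots, d_n\}$, $M = \{g_1, \ldots, g_n\}$, $S = \{f_1, \ldots, f_n\}$, and $\epsilon = 1/n$ to obtain c.c.p. maps $\psi_0^{(n)}, \ldots, \psi_d^{(n)} : C(\G) \to A$ forming a $(d, F, M, S, 1/n)$-Rokhlin system. Define c.c.p. maps $\Psi_\ell : C(\G) \to \ell^\infty(\N, A)$ by $\Psi_\ell(f)_n := \psi_\ell^{(n)}(f)$, and let $\overline{\Psi}_\ell : C(\G) \to A_\infty$ be their compositions with the quotient $\ell^\infty(\N, A) \to A_\infty$. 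The four conditions of the Rokhlin system translate into: (1) the commutators $[\overline{\Psi}_\ell(f), d]$ vanish for every $d \in D$, so $\overline{\Psi}_\ell(C(\G)) \subset A_\infty \cap D'$; (2) the elements $\overline{\Psi}_\ell(\sigma_g(f)) - \alpha_\infty(g)(\overline{\Psi}_\ell(f))$ lie in $\Ann(D, A_\infty)$ for every $g \in G$; (3) the products $\overline{\Psi}_\ell(f_1)\overline{\Psi}_\ell(f_2)$ with $f_1 \perp f_2$ lie in $\Ann(D, A_\infty)$; and (4) $\sum_\ell \overline{\Psi}_\ell(1_{C(\G)}) - 1$ lies in $\Ann(D, A_\infty)$. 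Composing with $\kappa_{D, A}$ then yields equivariant order-zero c.c.p. maps $\varphi_\ell := \kappa_{D, A} \circ \overline{\Psi}_\ell : C(\G) \to F(D, A)$ with $\sum_\ell \varphi_\ell(1_{C(\G)}) = 1_{F(D, A)}$.

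\medskip

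For the reverse direction, given $F \ssubset A$, $M \ssubset G$, $S \ssubset C(\G)$, and $\epsilon > 0$, I take $D$ to be any separable $\alpha$-invariant subalgebra of $A$ containing $F$ and invoke the hypothesis to obtain the $\varphi_\ell$. Since $C(\G)$ is nuclear and separable, the Choi--Effros lifting theorem first produces a c.c.p. lift $\widehat{\varphi}_\ell : C(\G) \to A_\infty \cap D'$ with $\kappa_{D, A} \circ \widehat{\varphi}_\ell = \varphi_\ell$, and then a further c.c.p. lift $\Psi_\ell : C(\G) \to \ell^\infty(\N, A)$ of $\widehat{\varphi}_\ell$ through the quotient $\ell^\infty(\N, A) \to A_\infty$. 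Writing $\Psi_\ell(f) = (\psi_\ell^{(n)}(f))_n$, the exact identities satisfied by $\varphi_\ell$ (equivariance, order zero, and unit sum) translate, after multiplying by elements of $D \supset F$, into approximate identities for $\psi_\ell^{(n)}$ that hold eventually in $n$; picking $n$ large enough produces a $(d, F, M, S, \epsilon)$-Rokhlin system, so \autoref{prop_rokhlin_system} yields $\dr(\alpha) \leq d$. The main delicate point is the double Choi--Effros lift, where one must verify that equivariance and the order-zero condition, which need not be preserved exactly under c.c.p. lifting, nevertheless survive in the weaker form of holding modulo $\Ann(D, A_\infty)$; this survival is automatic because $F(D, A)$ is defined as the quotient precisely by this annihilator ideal.
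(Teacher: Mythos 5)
Your route is the same as the paper's: the forward direction assembles a diagonal sequence of $(d,F_n,M_n,S_n,1/n)$-Rokhlin systems from \autoref{prop_rokhlin_system} into maps $C(\G)\to A_\infty\cap D'$ and pushes them down through $\kappa_{D,A}$, and the reverse direction is the Choi--Effros lifting argument (which the paper delegates to \cite[Lemma 3.7]{gardella_rokhlin}). Your reverse direction is sound, including the key observation that the exact identities in $F(D,A)$ need only survive modulo $\Ann(D,A_\infty)$, hence only after multiplication by elements of $D\supset F$, which is an asymptotic statement about the lifted sequences.

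There is, however, one step in your forward direction that fails as written: the claim that condition (3) of the approximate systems yields $\overline{\Psi}_{\ell}(f_1)\overline{\Psi}_{\ell}(f_2)\in\Ann(D,A_\infty)$ for \emph{all} orthogonal $f_1,f_2\in C(\G)$, i.e.\ that $\varphi_{\ell}$ is order zero. The Rokhlin systems control orthogonal pairs only inside your chosen dense set $\bigcup_n S_n$, and orthogonality is not stable under small perturbations: an arbitrary countable dense subset of $C(\G)$ may contain no nontrivial orthogonal pairs at all (perturb each function to be nowhere vanishing), in which case condition (3) is vacuous and gives no control on $\varphi_{\ell}$. One must choose the dense set so that every positive $f$ admits approximants $f_0$ in the set with $\text{supp}(f_0)\subset\text{supp}(f)$; then arbitrary orthogonal pairs can be approximated by orthogonal pairs from the dense set, and order zero passes to the limit. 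The paper builds exactly this property into its dense set $\mathcal{A}$, citing \cite[Lemma 1.4]{self_rokhlin}. The remaining conditions (commutation with $D$, equivariance, the unit sum) are closed under norm limits in both variables, so plain density suffices for them; orthogonality is the one place where the choice of dense set matters, and it is the one place your argument skips.
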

\begin{proof}
Suppose that $\dr(\alpha)\leq d$. Choose a separable, $\alpha$-invariant C*-subalgebra $D\subset A$ and choose an increasing sequence of finite sets $(F_n)$ such that $T := \bigcup_{n=1}^{\infty} F_n$ is dense in $D$. Since $\G$ is compact and metrizable, there is an increasing sequence $(S_n)$ of finite subsets of $C(\G)$ such that $\mathcal{A} := \bigcup_{n=1}^{\infty} S_n$ is dense in $C(\G)$. Furthermore, we may arrange it so that for each positive function $f\in C(\G)$ and each $\delta > 0$, there exists $f_0\in \mathcal{A}$ such that $\|f_0-f\| < \delta$ and $\text{supp}(f_0) \subset \text{supp}(f)$ (\cite[Lemma 1.4]{self_rokhlin}). Finally, since $G$ is countable, there is an increasing sequence $(M_n)$ of finite subsets of $G$ such that $G = \bigcup_{n=1}^{\infty} M_n$. For each $n\in \N$, let
\[
\psi^{(n)}_0, \psi^{(n)}_1, \ldots, \psi^{(n)}_d : C(\G)\to A
\]
be a $(d, F_n, M_n, S_n, 1/n)$-Rokhlin system. For $0\leq \ell\leq d$, define $\widetilde{\varphi}_{\ell} : C(\G)\to A_{\infty}$ by
\[
\widetilde{\varphi}_{\ell}(f) := \eta_A((\psi^{(n)}_{\ell}(f)))
\]
where $\eta_A : \ell^{\infty}(\N,A)\to A_{\infty}$ is the natural quotient map. If $f\in \mathcal{A}$ and $x\in T$, then there exists $N\in \N$ such that
\[
\|[\psi^{(n)}_{\ell}(f), x]\| < \frac{1}{n}
\]
for all $n\geq N$. Therefore, $\widetilde{\varphi}_{\ell}(f)$ commutes with $x$ in $A_{\infty}$. Since $\mathcal{A}$ is dense in $C(\G)$ and $T$ is dense in $D$, it follows that $\widetilde{\varphi}_{\ell}(f) \in D'$ for all $f\in C(\G)$. We thus get well-defined maps
\[
\varphi_0, \varphi_1, \ldots, \varphi_d : C(\G) \to F(D,A)
\]
given by $\varphi_{\ell} := \kappa_{D,A}\circ \widetilde{\varphi}_{\ell}$. Following the argument of \cite[Lemma 1.5]{self_rokhlin}, one can show that these maps satisfy the required conditions. \\

For the converse, the proof of \cite[Lemma 3.7]{gardella_rokhlin} applies verbatim, except that $\G$ plays the role of $G$ here.
\end{proof}

In \cite{szabo_wu_zacharias}, the authors introduce a notion that is somewhat finer than Rokhlin dimension, which we now describe. Let $\alpha : G\to \Aut(A)$ be an action of a residually finite group on a C*-algebra $A$. A decreasing sequence $\tau = (G_n)$ of finite index subgroups of $G$ is said to be a regular approximation if for each non-identity element $g\in G$, there exists $n\in \N$ such that $G_n\cap g^GG_n = \emptyset$ where $g^G$ denotes the conjugacy class of $g$ (see \cite[Definition 3.5]{szabo_wu_zacharias}). Given such a sequence $\tau$, the Rokhlin dimension of $\alpha$ with respect to $\tau$ is defined as 
\[
\dr(\alpha, \tau) := \sup_{n \in \N} \dr(\alpha, G_n).
\]
It is clear that $\dr(\alpha, \tau) \leq \dr(\alpha)$ for any regular approximation $\tau$ and there are examples where strict inequality holds. However, if $G$ is a finitely generated group, then there is a regular approximation $\tau$ such that $\dr(\alpha) = \dr(\alpha, \tau)$. Now, the condition $\dr(\alpha, \tau) \leq d$ also admits an equivalent formulation along the lines of \autoref{cor_rokhlin_dim_equiv_defn}. Indeed, the role of $\G$ in that result would now be played by $\G_{\tau} := \varprojlim G/G_n$, and $C(\G_{\tau})$ also carries a natural action of $G$ which we denote by $\sigma_{\tau}$. Since $\G_{\tau}$ is a subspace of $\prod_{n=1}^{\infty} G/G_n$, it is metrizable, so the following result holds even when the group is not finitely generated.

\begin{cor}
Let $A$ be a C*-algebra, $G$ be a residually finite group, $\tau$ be a regular approximation of $G$ and $\alpha : G\to \Aut(A)$ be an action of $G$ on $A$. Then $\dr(\alpha, \tau) \leq d$ if and only if, for each separable, $\alpha$-invariant C*-subalgebra $D\subset A$, there exist $(d+1)$ equivariant, order zero, c.c.p. maps
\[
\varphi_0, \varphi_1, \ldots, \varphi_d : (C(\G_{\tau}), \sigma_{\tau}) \to (F(D,A),\widetilde{\alpha}_{\infty})
\]
such that $\sum_{\ell=0}^d \varphi_{\ell}(1_{C(\G_{\tau})}) = 1_{F(D,A)}$.
\end{cor}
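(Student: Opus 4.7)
The plan is to mirror the two-step strategy used to establish \autoref{cor_rokhlin_dim_equiv_defn}, with $\G_{\tau} = \varprojlim_n G/G_n$ playing the role of $\G$. The crucial feature is that $\G_{\tau}$ is by definition an inverse limit of a countable sequence of finite groups and is therefore metrizable (regardless of whether $G$ is finitely generated), so $C(\G_{\tau})$ is separable. This is precisely the ingredient that let us drop the finite-generation hypothesis from the statement.

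As a preliminary step, I would first establish a local analogue of \autoref{prop_rokhlin_system}: namely that $\dr(\alpha,\tau)\leq d$ is equivalent to the existence, for every $F\ssubset A$, $M\ssubset G$, $S\ssubset C(\G_{\tau})$ and $\epsilon>0$, of $(d+1)$ c.c.p. maps $\psi_0,\ldots,\psi_d:C(\G_{\tau})\to A$ satisfying the four approximate conditions of that proposition (with $\G$ replaced by $\G_{\tau}$). The argument transcribes verbatim. The analogue of \autoref{lem_inductive_limit_profinite_completion} gives $(C(\G_{\tau}),\sigma_{\tau})\cong\varinjlim (C(G/G_n),\sigma^{G_n})$, so by density we may assume $S\subset\pi_{G_n}^{\ast}(C(G/G_n))$ for some $n\in\N$; the hypothesis $\dr(\alpha,G_n)\leq d$ then yields, via \autoref{lem_rokhlin_system_finite_index}, an $(G_n,d,F,M,S',\epsilon/3)$-Rokhlin system on $C(G/G_n)$, which is lifted to $C(\G_{\tau})$ using nuclearity of $C(G/G_n)$ together with Arveson's extension theorem exactly as in the proof of \autoref{prop_rokhlin_system}. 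The converse direction is immediate: precomposing a Rokhlin system on $C(\G_{\tau})$ with the unital equivariant embedding $\pi_{G_n}^{\ast}:C(G/G_n)\to C(\G_{\tau})$ delivers the required data for each $G_n$ in the approximation.

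With this local version in hand, the forward direction of the corollary follows by the diagonal-sequence argument of \autoref{cor_rokhlin_dim_equiv_defn}. Metrizability of $\G_{\tau}$ supplies an increasing sequence $(S_n)$ of finite subsets with dense union in $C(\G_{\tau})$, chosen so supports can be controlled as in \cite[Lemma 1.4]{self_rokhlin}; separability of $D$ and countability of $G$ supply analogous exhaustions $(F_n)$ and $(M_n)$. Choosing a $(d,F_n,M_n,S_n,1/n)$-Rokhlin system for each $n$ and setting $\widetilde{\varphi}_{\ell}(f):=\eta_A((\psi_{\ell}^{(n)}(f)))$ produces c.c.p. maps $C(\G_{\tau})\to A_{\infty}$ landing in $A_{\infty}\cap D'$ by condition~(1); composing with $\kappa_{D,A}$ and checking the order zero, equivariance, and unit conditions on dense subsets (as in \cite[Lemma 1.5]{self_rokhlin}) gives the required $\varphi_{\ell}:C(\G_{\tau})\to F(D,A)$. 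For the converse, given such $\varphi_{\ell}$ and any $G_n$ from the regular approximation $\tau$, precomposition with $\pi_{G_n}^{\ast}$ yields equivariant order zero c.c.p. maps $C(G/G_n)\to F(D,A)$ summing to $1_{F(D,A)}$, whence $\dr(\alpha,G_n)\leq d$; taking the supremum over $n$ gives $\dr(\alpha,\tau)\leq d$.

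There is no real obstacle here; the entire argument is a transcription of the proofs of \autoref{prop_rokhlin_system} and \autoref{cor_rokhlin_dim_equiv_defn} with $\G_{\tau}$ in place of $\G$. The only point worth flagging is why the finite-generation assumption can be dropped: we need $C(\G_{\tau})$ to be separable so that the diagonal construction of the $\widetilde{\varphi}_{\ell}$ can be carried out, and this separability is guaranteed by the countability of the regular approximation sequence $\tau$, not by any hypothesis on $G$.
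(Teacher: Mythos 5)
Your proposal is correct and follows exactly the route the paper intends: the paper gives no written proof for this corollary, only the preceding remark that the arguments of \autoref{prop_rokhlin_system} and \autoref{cor_rokhlin_dim_equiv_defn} transcribe with $\G_{\tau}$ in place of $\G$, which is precisely what you carry out. You also correctly identify the one substantive point, namely that metrizability of $\G_{\tau}$ (hence separability of $C(\G_{\tau})$) comes from the countability of the sequence $\tau$ rather than from finite generation of $G$, which is exactly the paper's stated reason for dropping that hypothesis.
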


\subsection{Rokhlin Dimension with Commuting Towers}

The notion of Rokhlin dimension with commuting towers has proved to be very useful, particularly in the context of compact groups (see, for instance, \cite{gardella_hirshberg_santiago}). For the purpose of this paper, it is here merely for completeness. Almost all results we prove for Rokhlin dimension with commuting towers have analogous statements for Rokhlin dimension without commuting towers.

\begin{definition}\cite[Definition 5.4]{szabo_wu_zacharias}
Let $A$ be a C*-algebra, $G$ be a discrete, countable group, $H$ be a subgroup of $G$ of finite index, and $\alpha:G\to \Aut(A)$ be an action of $G$ on $A$. We say that $\alpha$ has Rokhlin dimension $d$ with commuting towers relative to $H$ if $d$ is the least integer such that for any separable, $\alpha$-invariant C*-subalgebra $D\subset A$, there exist $(d+1)$ equivariant c.c.p. order zero maps
\[
\varphi_0, \varphi_1, \ldots, \varphi_d :(C(G/H),\sigma^H)\to (F(D,A),\widetilde{\alpha}_{\infty})
\]
with pairwise commuting ranges such that $\sum_{\ell=0}^{d}\varphi_{\ell}(1_{C(G/H)})=1_{F(D,A)}$. We denote the Rokhlin dimension of $\alpha$ with commuting towers relative to $H$ by $\dr^c(\alpha, H)$. If no such integer $d$ exists, then we write $\dr^c(\alpha, H) = +\infty$. We define the Rokhlin dimension of $\alpha$ with commuting towers as
\[
\dr^c(\alpha) = \sup\{ \dr^c(\alpha, H):  H\lf G\}.
\]
\end{definition}

Each result from the previous section has an analogous version for Rokhlin dimension with commuting towers. For brevity, we state (without proof) the two that we will need in the future.

\begin{prop}\label{prop_rokhlin_system_commuting}
Let $A$ be a C*-algebra, $G$ be a residually finite group and $\alpha:G\to \Aut(A)$ be an action of $G$ on $A$. Then $\dr^c(\alpha)\leq d$ if and only if for any $F\ssubset A, M\ssubset G, S\ssubset C(\G)$ and any $\epsilon>0$ there exist $(d+1)$ c.c.p. maps
\[
\psi_0, \psi_1, \ldots, \psi_d :C(\G)\to A
\]
satisfying the following properties:
\begin{enumerate}
\item $[\psi_{\ell}(f),a]\approx_{\epsilon} 0$ for all $a\in F, f\in S$ and $0\leq \ell\leq d$.
\item $\psi_{\ell}(\sigma_g(f))a \approx_{\epsilon} \alpha_g(\psi_{\ell}(f))a$ for all $a\in F, f\in S, g\in M$ and $0\leq \ell\leq d$.
\item $\psi_{\ell}(f_1)\psi_{\ell}(f_2)a\approx_{\epsilon} 0$ for all $a\in F$ and $f_1, f_2\in S$ such that $f_1\perp f_2$ and all $0\leq \ell\leq d$.
\item $\sum_{\ell=0}^d \psi_{\ell}(1_{C(\bar{G})})a\approx_{\epsilon} a$ for all $a\in F$.
\item $[\psi_k(f_1), \psi_{\ell}(f_2)]a \approx_{\epsilon} 0$ for all $f_1, f_2\in S, 0\leq k, \ell\leq d$ and all $a\in F$.
\end{enumerate}
\end{prop}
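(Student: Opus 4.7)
The plan is to imitate the proof of \autoref{prop_rokhlin_system} almost verbatim, adding one additional approximation to carry the new commutation condition~(5) through the argument. The first ingredient is the commuting-towers analogue of \autoref{lem_rokhlin_system_finite_index}, which follows from \cite[Proposition 5.5]{szabo_wu_zacharias} by the same lifting argument used there: $\dr^c(\alpha,H) \leq d$ is equivalent to the existence, for each $(H,d,F,M,S',\epsilon)$, of c.c.p.\ maps $\widetilde{\psi}_0,\ldots,\widetilde{\psi}_d : C(G/H) \to A$ satisfying properties~(1)--(4) of \autoref{lem_rokhlin_system_finite_index} together with the commutation estimate $[\widetilde{\psi}_k(f_1),\widetilde{\psi}_\ell(f_2)]a \approx_{\epsilon} 0$ for all $a \in F$, $f_1,f_2 \in S'$ and $0\leq k,\ell\leq d$.

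For the forward direction, given $F \ssubset A$, $M \ssubset G$, $S \ssubset C(\G)$ and $\epsilon > 0$, use \autoref{lem_inductive_limit_profinite_completion} to assume $S = \pi_H^{\ast}(S')$ for some $H \in \I_G$ and finite $S' \subset C(G/H)$, arranging as before that $1_{C(G/H)}\in S'$, $e\in M$, and $\|a\|\leq 1$ for $a\in F$. Apply the commuting-towers finite-index lemma above to produce maps $\widetilde{\psi}_\ell : C(G/H) \to A$ forming an $(H,d,F,M,S',\epsilon/3)$-Rokhlin system with commuting towers. As in the proof of \autoref{prop_rokhlin_system}, factor each $\widetilde{\psi}_\ell$ pointwise-in-norm through a matrix algebra as $\theta^i_\ell \circ \rho^i_\ell$ using nuclearity of $C(G/H)$, extend each $\rho^i_\ell$ via Arveson's theorem to $\overline{\rho}^i_\ell : C(\G) \to M_{k(i)}(\C)$ with $\overline{\rho}^i_\ell \circ \pi_H^{\ast} = \rho^i_\ell$, and set $\psi_\ell := \theta^{i_0}_\ell \circ \overline{\rho}^{i_0}_\ell$ for a sufficiently large common index $i_0$. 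The verification of conditions~(1)--(4) is identical to that in \autoref{prop_rokhlin_system}.

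The only new estimate is condition~(5). For $f_1,f_2 \in S$, write $f_j = \pi_H^{\ast}(f'_j)$ with $f'_j \in S'$; then $\psi_\ell(f_j) = \theta^{i_0}_\ell(\rho^{i_0}_\ell(f'_j))$, which is norm-close to $\widetilde{\psi}_\ell(f'_j)$ once $i_0$ is large. Since each map is contractive and $\|a\|\leq 1$, it follows that $[\psi_k(f_1),\psi_\ell(f_2)]a$ is norm-close to $[\widetilde{\psi}_k(f'_1),\widetilde{\psi}_\ell(f'_2)]a$, which in turn lies within $\epsilon/3$ of zero by the commuting-towers property of the $\widetilde{\psi}_\ell$. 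Choosing $i_0$ large enough to handle the finitely many pairs $(f_1,f_2) \in S \times S$ and elements $a \in F$ simultaneously bounds the total error by $\epsilon$.

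The converse is immediate: for any $H \lf G$, apply the $C(\G)$ hypothesis to the tuple $(d,F,M,\pi_H^{\ast}(S),\epsilon)$ to obtain maps $\psi_\ell : C(\G) \to A$ satisfying (1)--(5), and set $\widetilde{\psi}_\ell := \psi_\ell \circ \pi_H^{\ast}$. Since $\pi_H^{\ast}$ is a unital, $G$-equivariant $\ast$-homomorphism, all five conditions transfer verbatim to the $\widetilde{\psi}_\ell$, so $\dr^c(\alpha,H) \leq d$ by the finite-index lemma, and taking the supremum over $H \lf G$ gives $\dr^c(\alpha) \leq d$. The main obstacle is not conceptual but one of bookkeeping: ensuring that condition~(5) is preserved under the nuclearity--Arveson lifting step. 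This works because norm convergence preserves approximate commutation on the finitely many test elements, which is all we need.
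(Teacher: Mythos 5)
The paper states this proposition without proof, noting only that each result of the preceding subsection has an analogous commuting-towers version; your argument is precisely that analogue, carrying out the same approximation--nuclearity--Arveson scheme as in \autoref{prop_rokhlin_system} and correctly observing that the new condition~(5) survives the norm-convergent lifting step on the finitely many test elements. Your proposal is correct and matches the approach the paper intends.
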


The set of maps $\{\psi_0, \psi_1, \ldots, \psi_d\}$ satisfying the conditions of this proposition is called a $(d, F,M,S,\epsilon)$-commuting Rokhlin system.

\begin{prop}
Let $A$ be a C*-algebra, $G$ be a finitely generated, residually finite group and $\alpha:G\to \Aut(A)$ be an action of $G$ on $A$. Then $\dr^c(\alpha) \leq d$ if and only if, for each separable, $\alpha$-invariant C*-subalgebra $D\subset A$, there exist $(d+1)$ equivariant, order zero, c.c.p. maps
\[
\varphi_0, \varphi_1, \ldots, \varphi_d : (C(\G), \sigma) \to (F(D,A),\widetilde{\alpha}_{\infty})
\]
with pairwise commuting ranges such that $\sum_{\ell=0}^d \varphi_{\ell}(1_{C(\G)}) = 1_{F(D,A)}$.
\end{prop}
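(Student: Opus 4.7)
The plan is to mimic the proof of \autoref{cor_rokhlin_dim_equiv_defn} with the single extra condition (the pairwise commuting ranges) threaded through each step. Both directions are parallel to the non-commuting-towers case; the only new input is that property (5) of \autoref{prop_rokhlin_system_commuting} passes to the limit exactly as properties (1)--(4) do.

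For the forward direction, assume $\dr^c(\alpha)\leq d$ and fix a separable $\alpha$-invariant subalgebra $D\subset A$. Since $G$ is finitely generated, \autoref{lem_finitely_generated_metrizable} makes $C(\G)$ separable, so I can fix increasing exhausting finite sets $F_n\ssubset D$, $M_n\ssubset G$ and $S_n\ssubset C(\G)$ with dense union, chosen as in the proof of \autoref{cor_rokhlin_dim_equiv_defn} (in particular so that every positive $f\in C(\G)$ is approximated by elements of $\mathcal{A}:=\bigcup S_n$ with support contained in $\operatorname{supp}(f)$). For each $n$, \autoref{prop_rokhlin_system_commuting} supplies a $(d,F_n,M_n,S_n,1/n)$-commuting Rokhlin system $\psi^{(n)}_0,\ldots,\psi^{(n)}_d$. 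Define $\widetilde{\varphi}_\ell : C(\G)\to A_\infty$ by $\widetilde{\varphi}_\ell(f):=\eta_A((\psi^{(n)}_\ell(f)))$ and $\varphi_\ell:=\kappa_{D,A}\circ\widetilde{\varphi}_\ell$. The verification that $\varphi_\ell$ lands in $F(D,A)$, is equivariant, order zero, c.c.p., and that $\sum_\ell\varphi_\ell(1_{C(\G)})=1_{F(D,A)}$ is identical to \cite[Lemma 1.5]{self_rokhlin} (as invoked in the proof of \autoref{cor_rokhlin_dim_equiv_defn}). The new ingredient is the commuting ranges: for $f_1,f_2\in\mathcal{A}$ and $x\in T:=\bigcup F_n$, condition (5) of the commuting Rokhlin system gives $\|[\psi^{(n)}_k(f_1),\psi^{(n)}_\ell(f_2)]x\|<1/n$ for all sufficiently large $n$, hence $[\widetilde{\varphi}_k(f_1),\widetilde{\varphi}_\ell(f_2)]\cdot x=0$ in $A_\infty$ for every $x\in D$. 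This means $[\widetilde{\varphi}_k(f_1),\widetilde{\varphi}_\ell(f_2)]\in\operatorname{Ann}(D,A_\infty)$, so the images under $\kappa_{D,A}$ commute. Density of $\mathcal{A}$ in $C(\G)$ extends pairwise commutativity to all of $C(\G)$.

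For the converse, the argument of \cite[Lemma 3.7]{gardella_rokhlin} goes through verbatim with $\G$ in place of $G$, just as in the converse half of \autoref{cor_rokhlin_dim_equiv_defn}. Given $F,M,S,\epsilon$, one lifts the maps $\varphi_\ell$ through $\kappa_{D,A}$ to bounded sequences of c.c.p. maps $(\psi^{(n)}_\ell)$; the approximate conditions (1)--(4) of \autoref{prop_rokhlin_system_commuting} for some sufficiently large $n$ follow from the defining properties of $\varphi_\ell$, and the pairwise commutativity of the $\varphi_\ell$ translates into property (5) via the fact that $\widetilde{\varphi}_k(f_1)\widetilde{\varphi}_\ell(f_2)-\widetilde{\varphi}_\ell(f_2)\widetilde{\varphi}_k(f_1)\in\operatorname{Ann}(D,A_\infty)$, which means this element annihilates $F$ in the sense needed, yielding the required approximate commutativity.

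The main obstacle, such as it is, is purely bookkeeping: making sure that the Ann-ideal interpretation is compatible with the approximation step used on the other four properties. Once one observes that property (5) of a commuting Rokhlin system is formally of the same shape as properties (1) and (3) (a commutator or product, multiplied by elements of $F$, is $\epsilon$-small), it slots into the arguments of \cite[Lemma 1.5]{self_rokhlin} and \cite[Lemma 3.7]{gardella_rokhlin} without modification.
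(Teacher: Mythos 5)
Your proof is correct and is exactly the argument the paper intends: the paper states this proposition without proof, remarking only that it is the commuting-towers analogue of \autoref{cor_rokhlin_dim_equiv_defn}, and your write-up supplies precisely that adaptation, with condition (5) handled in the same way as conditions (1) and (3). The only point worth making explicit is why $[\widetilde{\varphi}_k(f_1),\widetilde{\varphi}_\ell(f_2)]$ lies in $\Ann(D,A_\infty)$ rather than merely left-annihilating $D$: since both $\widetilde{\varphi}_k(f_1)$ and $\widetilde{\varphi}_\ell(f_2)$ already lie in $A_\infty\cap D'$, their commutator commutes with $D$, so left annihilation implies two-sided annihilation.
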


The reason we have isolated this result is that it leads directly to the following theorem. This was proved in the context of compact groups by Gardella \cite[Lemma 4.3]{gardella_rokhlin}. Since we wish to identify the universal space explicitly (and also estimate its covering dimension), we repeat the proof below with appropriate modifications. Recall that if $G$ is a finitely generated, residually finite group, then $\G$ is a compact metric space. For $n\in \N$, we write $\G^{\ast(n)}$ for the $n$-fold join of $\G$ with itself. Moreover, if $\sigma : G\to \Aut(C(\G))$ is the natural action described earlier, we write $ \sigma^{(n)} : G\to \Aut(C(\G^{\ast(n)}))$ for the induced action of $G$ on $C(\G^{\ast(n)})$.

\begin{theorem}
Let $A$ be a C*-algebra, $G$ be a finitely generated, residually finite group and $\alpha:G\to \Aut(A)$ be an action of $G$ on $A$. Then, $\dr^c(\alpha) \leq d$ if and only if, for each separable, $\alpha$-invariant C*-subalgebra $D\subset A$, there is a unital, $G$-equivariant $\ast$-homomorphism
\[
\varphi : (C(\G^{\ast(d+1)}), \sigma^{(d+1)}) \to (F(D,A), \widetilde{\alpha}_{\infty}).
\]
Moreover, $\dim(\G^{\ast(d+1)}) \leq d$.
\end{theorem}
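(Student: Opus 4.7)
First, the dimension bound is immediate: since $\G$ is profinite, it is compact and totally disconnected, so $\dim(\G)=0$, and the second part of \autoref{prop_dimension_join} yields $\dim(\G^{\ast(d+1)}) \leq d$. The main equivalence is a $G$-equivariant adaptation of \cite[Lemma 4.3]{gardella_rokhlin}, combined with the commuting-tower reformulation (the proposition immediately preceding this theorem), which is why the argument is repeated here only with appropriate modifications.

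For the ``if'' direction, for each $0\le \ell\le d$ I would consider the natural inclusion $\iota_\ell : C(\G) \to C(\G^{\ast(d+1)})$ given by $\iota_\ell(f)([t_0,x_0,\ldots,t_d,x_d]) := t_\ell\, f(x_\ell)$, which is well defined on equivalence classes in the join. A direct check shows that each $\iota_\ell$ is a c.c.p.\ order zero map, that the family $\{\iota_\ell\}$ is $G$-equivariant for the pair $(\sigma, \sigma^{(d+1)})$, that the images pairwise commute (they lie in a commutative C*-algebra), and that $\sum_\ell \iota_\ell(1_{C(\G)}) = 1$. Composing with the given unital $\ast$-homomorphism $\varphi$ produces exactly the $(d+1)$ equivariant commuting order zero towers required to invoke the previous proposition and conclude $\dr^c(\alpha) \leq d$.

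For the converse, fix a separable, $\alpha$-invariant C*-subalgebra $D\subset A$ and apply the previous proposition to obtain equivariant c.c.p.\ order zero maps $\varphi_0, \ldots, \varphi_d : (C(\G), \sigma) \to (F(D,A), \widetilde{\alpha}_\infty)$ with pairwise commuting ranges satisfying $\sum_\ell \varphi_\ell(1_{C(\G)}) = 1$. By the Wolff--Winter structure theorem for order zero maps, each $\varphi_\ell$ lifts uniquely to a $\ast$-homomorphism $\hat\varphi_\ell : C_0((0,1]) \otimes C(\G) \to F(D,A)$ determined by $\hat\varphi_\ell(\mathrm{id}_{(0,1]} \otimes f) = \varphi_\ell(f)$, and this lift inherits $G$-equivariance. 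Commutativity of ranges upgrades to commutativity of the $\hat\varphi_\ell$, while the normalization $\sum_\ell \varphi_\ell(1) = 1$ says that the ``heights'' $h_\ell := \hat\varphi_\ell(\mathrm{id} \otimes 1)$ form a commuting partition of unity. Viewing $C(\G^{\ast(d+1)})$ as generated by $d+1$ commuting copies of $C_0((0,1]) \otimes C(\G)$ glued along the relation $\sum_\ell t_\ell = 1$, these data assemble into a unital $\ast$-homomorphism $\varphi : C(\G^{\ast(d+1)}) \to F(D,A)$ with $\varphi \circ \iota_\ell = \varphi_\ell$; equivariance for $\sigma^{(d+1)}$ follows because each $\hat\varphi_\ell$ is equivariant and the action on the join is diagonal.

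The principal technical point, and the main obstacle, is the universal property of the iterated join: one must verify that the commutative C*-algebra universally generated by $d+1$ pairwise commuting order zero copies of $C(\G)$ whose unit images sum to $1$ is canonically isomorphic to $C(\G^{\ast(d+1)})$, and that the canonical isomorphism intertwines the diagonal $G$-action with the componentwise $\sigma$-actions. Once this structural fact is isolated (as Gardella does in the compact case), both directions reduce to essentially formal verifications, and the only new ingredient is that $\G$ is now a profinite group rather than a compact Lie group, which does not affect the argument.
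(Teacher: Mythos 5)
Your proposal is correct and follows essentially the same route as the paper: the dimension bound via \autoref{prop_dimension_join}, the easy direction by composing the given $\ast$-homomorphism with the coordinate order zero maps into the join, and the hard direction via the structure theorem for order zero maps. The one step you explicitly defer --- identifying $C(\G^{\ast(d+1)})$ with the universal commutative C*-algebra generated by $d+1$ pairwise commuting order zero copies of $C(\G)$ whose unit images sum to $1$ --- is precisely where the paper's proof does its work: it realizes the join algebra as the quotient of $J = \ker(\omega) \subset \bigotimes_{\ell=0}^d C(\mathcal{C}\G)$ (the functions vanishing at the tuple of cone points) by the $\delta$-invariant ideal generated by $\{ef - f : f \in J\}$ with $e = \sum_{\ell} s_{\ell}$, using \cite[Lemma 5.2]{hirshberg} to assemble the individual cone homomorphisms $\rho_{\ell}$ into a single $\ast$-homomorphism on $J$ and then checking that the quotient is $C(Y)$ with $Y$ equivariantly homeomorphic to $\G^{\ast(d+1)}$. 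So the approach is sound, but a complete write-up would need to carry out (or explicitly cite) that identification rather than leave it as a verification to be done.
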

\begin{proof}
Suppose that $\dr^c(\alpha)\leq d$ and let $D\subset A$ be a separable, $\alpha$-invariant C*-subalgebra of $A$. Let $\varphi_0, \varphi_1, \ldots, \varphi_d : (C(\overline{G}),\sigma) \to (F(D,A),\widetilde{\alpha}_{\infty})$ be $G$-equivariant, order zero c.c.p. maps with pairwise commuting ranges such that $\sum_{\ell=0}^d \varphi_{\ell}(1_{C(\G)}) = 1_{F(D,A)}$. We then follow the line of reasoning from \cite[Lemma 4.3]{gardella_compact}. Let $t \in C_0(0,1]$ denote the identity function. Then for each $0\leq \ell\leq d$, there exists a $\ast$-homomorphism $\rho_{\ell} : C_0(0,1]\otimes C(\G)\to F(D,A)$ such that
\[
\rho_{\ell}(t\otimes f) = \varphi_{\ell}(f)
\]
for all $f\in C(\G)$. Let $\mathcal{C}\G$ denote the cone on $\G$, whose elements we denote by $[t,x]$ for $t\in [0,1]$ and $x\in \G$. Note that $C(\mathcal{C}\G)$ is the minimal unitization of $C_0(0,1]\otimes C(\G)$. In other words, $C(\mathcal{C}\overline{G}) = \{f : [0,1]\to C(\G) \text{ continuous}: f(0) \in \C1_{C(\G)}\}$. Then $G$ acts on $C(\mathcal{C}\G)$ by  $\gamma : G\to \text{Aut}(C(\mathcal{C}\G))$ given by $\gamma_g\cdot f ([t,x]) := f([t,\beta_{g^{-1}}(x)])$. Each $\rho_{\ell}$ is $G$-equivariant as well. Let
\[
E := \bigotimes_{\ell=0}^d C(\mathcal{C}\G)
\]
and let $\delta : G\to \Aut(E)$ be the action induced by $\gamma$. Let $\omega : E\to \C$ denote the $\ast$-homomorphism give on simple tensors by $\omega(f_0\otimes f_1\otimes \ldots \otimes f_d) := \prod_{\ell=0}^d f_{\ell}(0)$, and let $J := \ker(\omega)$. Observe that
\[
J = \{f : (\mathcal{C}\G)^{d+1} \to \C \text{ continuous} : f([0,x_0], [0, x_1], \ldots, [0, x_d]) = 0 \text{ for all } x_i \in \G\}
\]
By \cite[Lemma 5.2]{hirshberg}, there is a unique $\ast$-homomorphism $\rho : J\to F(D,A)$ induced by the tuple $(\rho_0, \ldots, \rho_d)$. Also note that $\delta_g(J)\subset J$ for each $g\in G$, so we get an induced action $\delta : G\to \text{Aut}(J)$ which is realized as
\[
\delta_g(f)([s_0,x_0], [s_1, x_1], \ldots, [s_d, x_d]) := f([s_0, \beta_{g^{-1}}(x_0)], \ldots, [s_d, \beta_{g^{-1}}(x_d)]). 
\]
Let $e \in J$ denote the function $e([s_0,x_0], \ldots, [s_d, x_d]) := \sum_{\ell=0}^d s_{\ell}$. Then $\delta_g(e) = e$ for all $g\in G$, and $\rho(e) = \sum_{\ell=0}^d \varphi_{\ell}(1_{C(\overline{G})}) = 1_{F(D,A)}$. If $I$ denotes the $\delta$-invariant ideal in $J$ generated by the set $\{ef - f : f\in J\}$ and $C := J/I$, then $\rho$ induces a unital, $G$-equivariant $\ast$-homomorphism
\[
\overline{\rho} : C \to (F(D,A),\widetilde{\alpha}_{\infty}).
\] 
Now observe that $C = C(Y)$ where
\[
Y = \left\{([s_0,x_0], [s_1, x_1], \ldots, [s_d, x_d]) \in (\mathcal{C} \G)^{d+1}: \sum_{\ell=0}^d s_{\ell} =  1\right\}
\]
with the action of $G$ on $Y$ given by $g\cdot ([s_0, x_0], \ldots, [s_d, x_d]) := ([s_0, \beta_g(x_0)], \ldots, [s_d, \beta_g(x_d)])$. Hence there is a $G$-equivariant homeomorphism $Y \cong \overline{G}^{\ast(d+1)}$ and the result is proved. \\

The converse follows by simply reversing the above argument. Finally, observe that $\dim(\G) = 0$ because $\G$ is totally disconnected, so the inequality $\dim(\G^{\ast(d+1)}) \leq d$ follows from \autoref{prop_dimension_join}.
\end{proof}

\section{Permanence Properties}\label{sec_permanence}
We now wish to prove some permanence properties for actions with finite Rokhlin dimension along the lines of \cite[Section 3]{gardella_compact} and \cite{hirshberg_phillips}. The next lemma is stated as \cite[Lemma 3.3]{hirshberg_phillips}. We give a slightly more detailed proof here because we will seek to generalize it later in the section. 

\begin{lemma}\label{lem_positive_G_invariant_hereditary}
Let $A$ be a C*-algebra, $G$ be an amenable group, and $\beta : G\to \text{Aut}(A)$ be an action of $G$ on $A$. Let $D \subset A$ be a $\beta$-invariant hereditary subalgebra and let $\alpha : G\to \text{Aut}(D)$ denote the restriction of $\beta$ to $D$. Then for any $M \ssubset G, F\ssubset D$ and any $\eta > 0$, there exists a positive contraction $q\in D$ satisfying the following conditions:
\begin{enumerate}
\item $\|\alpha_s(q) - q\| < \eta$ for all $s\in M$.
\item $\|qa - a\| < \eta, \|aq-a\| < \eta$ and $\|qa - aq\| < \eta$ for all $a\in F$.
\end{enumerate}
\end{lemma}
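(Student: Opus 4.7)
My plan is a standard Day-type averaging argument: produce $q$ by averaging a tail of an approximate unit of $D$ over a Følner set in $G$, exploiting amenability to get approximate $G$-invariance and hereditariness to get that $q$ absorbs $F$.

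First, since $G$ is amenable, I choose a non-empty finite set $K \ssubset G$ such that
\[
\frac{|sK \triangle K|}{|K|} < \eta \quad \text{for every } s\in M.
\]
Next, since $D$ is a C*-algebra, it admits an approximate unit $(e_{\lambda})_{\lambda \in \Lambda}$ consisting of positive contractions in $D$. Because $D$ is $\beta$-invariant, each $\alpha_t$ is a $\ast$-automorphism of $D$, so for each fixed $t\in K$ the net $(\alpha_t(e_{\lambda}))_{\lambda}$ is again a positive approximate unit for $D$. In particular, for every $a\in F$ (and there are only finitely many $t\in K$ to worry about), we can choose a single index $\lambda_0$ large enough that
\[
\|\alpha_t(e_{\lambda_0})a - a\| < \eta/2 \quad \text{and} \quad \|a\alpha_t(e_{\lambda_0}) - a\| < \eta/2
\]
for every $t\in K$ and every $a\in F$.

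Now I would set
\[
q := \frac{1}{|K|}\sum_{t\in K} \alpha_t(e_{\lambda_0}) \in D.
\]
This is a convex combination of positive contractions in $D$, hence itself a positive contraction in $D$. To verify (1), observe that
\[
\alpha_s(q) - q = \frac{1}{|K|}\left(\sum_{u\in sK\setminus K} \alpha_u(e_{\lambda_0}) - \sum_{u\in K\setminus sK} \alpha_u(e_{\lambda_0})\right),
\]
so by the triangle inequality and the fact that each $\alpha_u(e_{\lambda_0})$ is a contraction, $\|\alpha_s(q)-q\| \leq |sK\triangle K|/|K| < \eta$ for every $s\in M$. To verify (2), the triangle inequality gives
\[
\|qa - a\| \leq \frac{1}{|K|}\sum_{t\in K}\|\alpha_t(e_{\lambda_0})a - a\| < \eta/2 < \eta,
\]
and symmetrically $\|aq - a\| < \eta/2$, whence $\|qa - aq\| \leq \|qa - a\| + \|a - aq\| < \eta$.

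The ``main obstacle'' is really only a bookkeeping one: guaranteeing that one can fix a single index $\lambda_0$ that simultaneously makes $\alpha_t(e_{\lambda_0})$ approximately absorb every $a\in F$ for every $t\in K$. This is handled by the finiteness of both $K$ and $F$ combined with the observation above that $(\alpha_t(e_\lambda))_\lambda$ is an approximate unit for $D$ for each $t$; the genuine ingredients are amenability (for the Følner estimate) and $\beta$-invariance of $D$ (so that the averaged element stays in $D$ and each $\alpha_t(e_{\lambda})$ is again an approximate unit).
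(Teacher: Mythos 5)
Your proposal is correct and follows essentially the same route as the paper: average an approximate unit of $D$ over a F\o lner set for $M$ and take the index large enough to handle the finite sets $F$ and $K$. The only difference is that you spell out the bookkeeping (simultaneous choice of $\lambda_0$ and the $sK\triangle K$ cancellation) that the paper leaves implicit.
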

\begin{proof}
Since the sets in question are finite, we may assume that $\|a\| \leq 1$ for all $a\in F$. Let $(e_{\lambda})_{\lambda \in \Lambda} \subset D$ be an approximate unit in $D$. Let $K \subset G$ be a F\o lner set such that $M\subset K$ and
\[
\frac{|sK\Delta K|}{|K|} < \eta
\]
for all $s\in M$. Define
\[
q_{\lambda} := \frac{1}{|K|}\sum_{t\in K} \alpha_t(e_{\lambda}).
\]
Observe that $(q_{\lambda})_{\lambda \in \Lambda}$ is an approximate unit for $D$ with the property that $\|\alpha_s(q_{\lambda}) - q_{\lambda}\| < \eta$ for all $s\in M$. Then $q := q_{\lambda}$ satisfies the required conditions for $\lambda$ large enough.
\end{proof}

Notice that this is the first time we have needed to assume that $G$ is an amenable group. Indeed, amenability is a crucial assumption in many of the results to follow. We should mention that amenability is implicitly an assumption in many of the results of \cite{szabo_wu_zacharias} as well: There, one would like groups to have box spaces with finite asymptotic dimension for a variety of results to be useful, and this condition automatically implies amenability (see \cite[Remark 3.12]{szabo_wu_zacharias}). We now prove a number of permanence properties analogous to those proved by Gardella \cite[Theorem 3.8]{gardella_compact} for compact groups.

\begin{theorem}\label{thm_permanence}
Let $A$ be a C*-algebra, $G$ be an amenable, residually finite group and $\alpha: G\to\ \text{Aut}(A)$ be an action of $G$ on $A$. 
\begin{enumerate}
\item Let $B$ be a C*-algebra and $\beta : G\to \text{Aut}(B)$ be an action. Let $A\otimes B$ be any C*-algebra completion of $A\odot B$ for which the tensor product action $g \mapsto \alpha_g\otimes \beta_g$ is defined and continuous. Then
\begin{eqsplit}
\dr(\alpha\otimes \beta) &\leq \min\{\dr(\alpha), \dr(\beta)\} \text{ and } \\
\dr^c(\alpha\otimes \beta) &\leq \min\{\dr^c(\alpha), \dr^c(\beta)\}
\end{eqsplit}
\item Let $I$ be an $\alpha$-invariant ideal in $A$, and let $\overline{\alpha} \in \text{Aut}(A/I)$ be the induced action on the quotient. Then
\begin{eqsplit}
\dr(\overline{\alpha}) &\leq \dr(\alpha) \text{ and }\\
\dr^c(\overline{\alpha}) &\leq \dr^c(\alpha)
\end{eqsplit}
\item Let $D \subset A$ an $\alpha$-invariant hereditary subalgebra of $A$ and let $\alpha\lvert_D : G\to \text{Aut}(D)$ be the induced action on $D$. Then,
\begin{eqsplit}
\dr(\alpha\lvert_D) &\leq \dr(\alpha) \text{ and }  \\
\dr^c(\alpha\lvert_D) &\leq \dr^c(\alpha).
\end{eqsplit}
\item Let $(A_n, \iota_n)$ be a direct system of C*-algebras, and let $\alpha^{(n)} \in \text{Aut}(A_n)$ be automorphisms such that $\iota_n \circ \alpha^{(n)} = \alpha^{(n+1)}$ for all $n \in \N$. Let $\alpha = \lim_{n\to \infty} \alpha^{(n)}$ denote the induced action on $A := \lim_{n\to \infty} (A_n, \iota_n)$. Then,
\begin{eqsplit}
\dr(\alpha) &\leq \liminf \dr(\alpha^{(n)}) \text{ and} \\
\dr^c(\alpha) &\leq \liminf \dr^c(\alpha^{(n)}).
\end{eqsplit}
\end{enumerate}
\end{theorem}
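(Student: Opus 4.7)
The plan is to reduce each of the four items to the finite-set characterisation of Rokhlin systems provided by \autoref{prop_rokhlin_system} (respectively \autoref{prop_rokhlin_system_commuting} for the commuting-towers variants). That is, for each $F \ssubset A, M \ssubset G, S \ssubset C(\G)$ and $\epsilon > 0$, I want to produce a $(d, F, M, S, \epsilon)$-Rokhlin system (or commuting Rokhlin system) directly in the relevant target algebra, where $d$ is the bound being claimed. Working at this level sidesteps the central sequence algebra $F(D, A)$ entirely and reduces each proof to a concrete construction of c.c.p.\ maps satisfying four (or five) approximate identities.

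Items (1), (2) and (4) are essentially routine. For (1), fix a finite set of elementary tensors in $A \otimes B$ (dense in any completion, so enlarging $F$ reduces to this case); use \autoref{lem_positive_G_invariant_hereditary} applied with $D = B$ to produce a positive contraction $b \in B$ that is approximately $\beta$-invariant on $M$ and an approximate two-sided unit on the $B$-coordinates of $F$; then given a Rokhlin system $\{\psi_\ell\}$ for $\alpha$ refined to accommodate the $A$-coordinates, set $\tilde{\psi}_\ell(f) := \psi_\ell(f) \otimes b$. Each condition of \autoref{prop_rokhlin_system} then follows from its counterpart for $\{\psi_\ell\}$ together with the defining properties of $b$. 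A symmetric argument using $\beta$ gives the other half of the minimum, and the commuting-towers version is automatic because the ranges stay commuting. For (2), simply lift finite subsets of $A/I$ arbitrarily to $A$, apply a Rokhlin system for $\alpha$, and compose with the quotient map $\pi : A \to A/I$. For (4), given $F \ssubset A$ approximate each element within $\epsilon/2$ by an element of $\iota_n(A_n)$ for some $n$ with $\dr(\alpha^{(n)}) \leq d := \liminf \dr(\alpha^{(n)})$, produce a Rokhlin system in $A_n$ against a lift of this finite set, and push it forward by the $G$-equivariant $\ast$-homomorphism $\iota_n$ (which preserves c.c.p.\ and order zero).

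The main obstacle is (3), where the maps must land inside the hereditary subalgebra $D$ rather than in $A$, and this is exactly why \autoref{lem_positive_G_invariant_hereditary} was isolated. Given $F \ssubset D$, $M \ssubset G$, $S \ssubset C(\G)$ and $\epsilon > 0$, first invoke that lemma with a small tolerance $\eta$ (to be fixed in terms of $\epsilon, |S|, d$) to obtain a positive contraction $q \in D$ that is $\eta$-invariant under $\alpha_s$ for $s \in M$, and an $\eta$-approximate two-sided unit on $F$ that $\eta$-commutes with $F$. Next, apply \autoref{prop_rokhlin_system} to $\alpha$ on $A$ with the enlarged finite set $F \cup \{q\}$ and a sufficiently small tolerance $\epsilon'$, producing a $(d, F \cup \{q\}, M, S, \epsilon')$-Rokhlin system $\{\psi_0, \ldots, \psi_d\}$. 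Define $\tilde{\psi}_\ell(f) := q^{1/2} \psi_\ell(f) q^{1/2}$; since $D$ is hereditary and $q \in D$, each $\tilde{\psi}_\ell$ is a c.c.p.\ map $C(\G) \to D$. The approximate commutation with elements of $F$, the approximate equivariance, and the approximate orthogonality transfer from $\{\psi_\ell\}$ to $\{\tilde{\psi}_\ell\}$ by standard C*-estimates using $\|q\| \leq 1$, the near-invariance of $q$ under $\alpha_s$ (for equivariance), and the near-commutation of $\psi_\ell(f)$ with $q$. For the sum condition, for $a \in F$,
\[
\sum_{\ell=0}^d \tilde{\psi}_\ell(1_{C(\G)}) a = \sum_{\ell=0}^d q^{1/2} \psi_\ell(1_{C(\G)}) q^{1/2} a \approx q \sum_{\ell=0}^d \psi_\ell(1_{C(\G)}) a \approx q a \approx a,
\]
where the three approximations invoke respectively the near-commutation of $\psi_\ell(1_{C(\G)})$ with $q^{1/2}$, condition (4) for the ambient Rokhlin system applied to $a$, and the fact that $q$ is an approximate unit on $F$. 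The commuting-towers variant is preserved because two-sided conjugation by $q^{1/2}$ does not disturb the pairwise near-commutation among the $\psi_\ell$'s. The delicate point — and the reason the preparatory lemma is needed — is that without amenability one cannot simultaneously arrange $q \in D$ to be approximately $G$-invariant and to act as an approximate unit on $F$, so this is precisely where the amenability hypothesis enters substantively.
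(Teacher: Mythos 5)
Your proposal is correct and follows essentially the same route as the paper: both reduce everything to the finite-set Rokhlin-system characterisation (\autoref{prop_rokhlin_system} / \autoref{prop_rokhlin_system_commuting}), use \autoref{lem_positive_G_invariant_hereditary} to produce the approximately invariant positive contraction (tensored in for item (1), used as the compressing element $q^{1/2}\,\cdot\,q^{1/2}$ for item (3)), and handle (2) and (4) by lifting finite sets and pushing Rokhlin systems through the quotient map or the connecting maps. The only cosmetic difference is that in (1) you place the approximate unit in $B$ and the towers in $A$ while the paper does the mirror image before invoking symmetry.
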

\begin{proof}
In each case, the argument for $\dr^c(\cdot)$ is similar to that of $\dr(\cdot)$, so we omit the former.
~\begin{enumerate}
\item This proof is, in principle, identical to that of \cite[Theorem 3.8]{gardella_compact}, but we no longer need to assume that $B$ is unital. We first assume that $d:= \dr(\beta) < \infty$. Let $F\ssubset A\otimes B, M\ssubset G, S\ssubset C(\G)$ and let $\epsilon >0$ be given. We fix $\eta > 0$ to be chosen later. By approximating, we may assume that $F \subset A\odot B$. By further decomposing, we may assume that $F$ consists of elementary tensors, say $F = \{a_i\otimes b_i : 1\leq i\leq n\}$. Let $F_A := \{a_i : 1\leq i\leq n\}$ and $F_B := \{b_i : 1\leq i\leq n\}$. Then there are $(d+1)$ c.c.p. maps
\[
\varphi_0, \varphi_1, \ldots, \varphi_d : C(\overline{G})\to B
\]
which forms a $(d, F_B, M, S, \eta)$-Rokhlin system. Choose $q \in A$ by \autoref{lem_positive_G_invariant_hereditary} corresponding to the triple $(F_A, M, \eta)$. Then define $\theta : B\to A\otimes B$ by $b \mapsto q\otimes b$, and let $\psi_{\ell} : C(\G)\to A\otimes B$ be the c.c.p. map given by $\psi_{\ell} := \theta\circ \varphi_{\ell}$ for $0\leq \ell\leq d$. To verify that $\{\psi_0, \ldots, \psi_d\}$ forms a $(d, F, M, S, \epsilon)$-Rokhlin system, we must check the four conditions of \autoref{prop_rokhlin_system}. Since the arguments are similar, we only verify condition (2): If $g\in M, f \in S, x = a_i\otimes b_i \in F$ and $0\leq \ell\leq d$, then
\begin{eqsplit}
(\alpha\otimes \beta)_g(\psi_{\ell}(f))x &= \alpha_g(q) a_i \otimes \beta_g(\varphi_{\ell}(f))b_i \\
&\approx_{\eta} qa_i\otimes \beta_g(\varphi_{\ell}(f))b_i \\
&\approx_{\eta} qa_i\otimes \varphi_{\ell}(\sigma_g(f))b_i = \psi_{\ell}(\sigma_g(f))x.
\end{eqsplit}
We may verify the other conditions in a similar manner. One sees that if $\eta := \epsilon/(d+2)$, then the maps $\{\psi_0, \ldots, \psi_d\}$ forms a $(d, F, M, S, \epsilon)$-Rokhlin system. We conclude that $\dr(\alpha\otimes \beta) \leq \dr(\beta)$. The argument is symmetric, so it follows that
\[
\dr(\alpha\otimes \beta) \leq \min\{\dr(\alpha), \dr(\beta)\}.
\]
\item The argument here is identical to that of \cite[Proposition 3.4]{hirshberg_phillips}. Since it is omitted there, we give a sketch here. Assume that $d := \dr(\alpha) < \infty$. Let $F\ssubset A/I, S\ssubset C(\G), M\ssubset G$ and $\epsilon > 0$ be given. Let $\widetilde{F}\ssubset A$ be a finite set of preimages of elements in $F$ under the quotient map $Q : A\to A/I$. We may arrange it so that the norms of the lifts are the same as the norms of their images in $F$. By \autoref{prop_rokhlin_system}, we may choose c.c.p. maps
\[
\psi_0, \psi_1, \ldots, \psi_d : C(\G) \to A
\]
which form a $(d, \widetilde{F}, M, S, \epsilon)$-Rokhlin system. It is then clear that if $\varphi_{\ell} := Q\circ \psi_{\ell}$, then $\{\varphi_0, \varphi_1, \ldots, \varphi_d\}$ forms a $(d, F, M, S, \epsilon)$-Rokhlin system for the action $\overline{\alpha}$. Hence, $\dr(\overline{\alpha}) \leq d$. 
\item The argument is identical to \cite[Theorem 3.8]{gardella_compact} with $\G$ playing the role of $G$ (one also needs to appeal to \autoref{lem_positive_G_invariant_hereditary} to obtain the positive element $x$ in that proof).
\item Again, the argument is identical.
\end{enumerate}
\end{proof}

In \cite[Theorem 2.10]{hirshberg_phillips}, the authors have shown that if $\alpha : G\to \Aut(A)$ is an action of a finite group $G$ on a C*-algebra $A$ and if $J$ is an $\alpha$-invariant ideal of $A$, then $\dr^c(\alpha) \leq \dr^c(\alpha\lvert_J) + \dr^c(\overline{\alpha}) + 1$ where $\alpha\lvert_J : G\to \Aut(J)$ and $\overline{\alpha} : G\to \Aut(A/J)$ are the natural actions induced by $\alpha$ on $J$ and $A/J$ respectively. We wish to prove that the same estimates also hold for actions of amenable, residually finite groups. The next lemma isolates one part of the proof, and may be thought of along the lines of \autoref{lem_positive_G_invariant_hereditary}.

\begin{lemma}\label{lem_extension_approx_unit}
Let $G$ be an amenable group, and let
\[
0 \to (J,\alpha) \xrightarrow{\iota} (A,\beta)\xrightarrow{\pi} (B,\gamma) \to 0
\]
be a short exact sequence of $G$-algebras. If $d\in \N, F \ssubset A, M\ssubset G$ and $\eta > 0$, then there exists $q\in J$ such that $0\leq q\leq 1$ and if $\delta := \eta/(d+6)$, then the following conditions are satisfied:
\begin{enumerate}
\item If $a, b_1,b_2 \in F$ are such that $\pi(b_1b_2a)\approx_{\delta} 0$, then
\[
(1-q)^{1/2}b_1(1-q)b_2(1-q)^{1/2}a\approx_{\eta} 0
\]
\item If $a, b_0, b_1, \ldots, b_d \in F$ are such that $\sum_{j=0}^d \pi(b_ja) \approx_{\delta} \pi(a)$, then
\[
(1-q)^{1/2}(b_0 + b_1 + \ldots + b_d)(1-q)^{1/2}a \approx_{\eta} (1-q)a
\]
\item If $a, b\in F$ are such that $[\pi(b), \pi(a)] \approx_{\delta} 0$, then
\[
[(1-q)^{1/2}b(1-q)^{1/2}, a] \approx_{\eta} 0
\]
\item If $a, b_1, b_2\in F$ are such that $[\pi(b_1),\pi(b_2)]\pi(a) \approx_{\delta} 0$, then
\[
[(1-q)^{1/2}b_1(1-q)^{1/2}, (1-q)^{1/2}b_2(1-q)^{1/2}]a \approx_{\eta} 0
\]
\item If $b_1, b_2 \in F$ are such that $\pi(b_1)\approx_{\delta} \pi(b_2)$, then
\[
(1-q)^{1/2}b_1(1-q)^{1/2} \approx_{\eta} (1-q)^{1/2}b_2(1-q)^{1/2}
\]
\item  For every $b\in F$,
\[
[b, q] \approx_{\eta} 0, [b,q^{1/2}] \approx_{\eta} 0 \text{ and } [b, (1-q)^{1/2}] \approx_{\eta} 0.
\]
\item $\alpha_s(q) \approx_{\eta} q$ whenever $s\in M$.
\end{enumerate}
\end{lemma}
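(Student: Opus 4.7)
The plan is to construct a single element $q \in J$ that simultaneously serves as an approximate unit for $J$, quasi-centralizes $A$, and is almost $G$-invariant; all seven conditions will then follow from standard manipulations with $q$ and its functional-calculus square roots.

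\emph{Construction of $q$.} We begin with any approximate unit $(e_\lambda)$ of $J$ and first apply the F\o lner-averaging argument from the proof of \autoref{lem_positive_G_invariant_hereditary}, carried out inside $J$, to obtain an approximate unit $(q_\lambda)$ of $J$ that is almost $G$-invariant to any prescribed tolerance. We next apply the Kasparov/Arveson quasi-central approximate unit theorem to pass to convex combinations of $(q_\lambda)$, producing an approximate unit for $J$ whose commutators with elements of $A$ tend to zero. Convex combinations preserve the F\o lner estimate, and continuous functional calculus transfers quasi-centrality from $q$ to $q^{1/2}$ and to $(1-q)^{1/2}$. Choosing $q$ far enough along this sequence, we arrange that for every $b \in F$ the commutators $[q,b]$, $[q^{1/2},b]$, $[(1-q)^{1/2},b]$, the F\o lner defects $\alpha_s(q) - q$ for $s \in M$, and the quantities $\|(1-q)^{1/2} j (1-q)^{1/2}\|$ for $j$ in a prescribed finite set of lifts (specified below) are all bounded by $\delta = \eta/(d+6)$. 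Conditions (6) and (7) then hold immediately.

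\emph{Verification of (1)--(5).} Each hypothesis has the form $\pi(x) \approx_{\delta} 0$ for some element $x$ built from $F$, and there are only finitely many such $x$'s to consider (one per tuple drawn from $F$). By the quotient norm formula $\|\pi(x)\| = \inf_{j \in J}\|x - j\|$, we fix a decomposition $x = j_x + r_x$ with $j_x \in J$ and $\|r_x\| < \delta$ for each such $x$, and include each $j_x$ in the finite set used when choosing $q$. Each of (1)--(5) then follows the same recipe: use quasi-centrality of $(1-q)^{1/2}$ to commute it past the relevant $b$'s (each commutation costing at most $\delta$), split $x = j_x + r_x$, kill the $J$-piece via the estimate $\|(1-q)^{1/2} j_x (1-q)^{1/2}\| < \delta$, and bound the lift remainder directly by $\|r_x\| < \delta$.

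\emph{Main obstacle.} The principal bookkeeping challenge is condition (2), whose left-hand side is a sum of $d+1$ terms, each requiring a separate commutation of $(1-q)^{1/2}$ past a $b_j$. This is precisely what forces the factor $d+6$ in the choice of $\delta$: the total error in (2) accumulates as roughly $(d+1)\delta$ from commutations, together with a bounded number of further $\delta$-contributions from the lift remainder, the approximate-unit estimate on the $J$-piece, and from identifying $(1-q)^{1/2}(1-q)^{1/2}$ with $(1-q)$. The remaining conditions contribute only a bounded, dimension-independent number of $\delta$-errors, and so pose no further difficulty once (2) can be accommodated.
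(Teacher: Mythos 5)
Your proposal is correct and follows essentially the same route as the paper: a quasi-central, F\o lner-averaged approximate unit $(e_\lambda)$ of $J$, with each hypothesis $\pi(x)\approx_{\delta}0$ exploited through the fact that $1-e_\lambda$ asymptotically annihilates a fixed finite subset of $J$ --- your explicit decomposition $x=j_x+r_x$ is just an unpacking of the paper's key observation that $\|\pi(c)\|<\delta$ forces $\|c(1-e_\lambda)\|<\delta$ eventually. The only caution concerns condition (2), where commuting $(1-q)^{1/2}$ past $d+1$ terms \emph{and} moving $a$ past powers of $1-q$ can cost on the order of $2(d+1)\delta$ if every commutator is only controlled by $\delta$; you should demand the quasi-centrality (and F\o lner) tolerance to be much smaller than $\delta$ --- which costs nothing, since those defects can be made arbitrarily small while $\delta$ is fixed by the strict inequalities in the hypotheses.
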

\begin{proof}
Let $(e_{\lambda})_{\lambda \in \Lambda}$ be a quasi-central approximate unit for $J$. By the argument in \autoref{lem_positive_G_invariant_hereditary}, we may assume that $\|\alpha_s(e_{\lambda}) - e_{\lambda}\| < \eta$ for all $\lambda \in \Lambda$ and all $s\in M$. Let $\delta := \eta/(d+6)$ and assume without loss generality that $\|b\| \leq 1$ for all $b\in F$. Replacing $A$ by its unitalization, we may assume that $A$ is unital. Let $f:[0,1]\to \R$ and $g:[0,1]\to \R$ be the functions $f(t) := t^{1/2}$ and $g(t) := (1-t)^{1/2}$. By \cite[Exercise 3.9.6]{higson_roe}, there is $\rho > 0$ such that $0 < \rho < \delta$ and for any $b\in F$ and $0 \leq x\leq 1$,
\[
\|f(x)b - bf(x)\| < \delta \text{ and } \|g(x)b - bg(x)\| < \delta.
\]
hold whenever $\|xb - bx\| < \rho$. Replacing $(e_{\lambda})$ by a subnet, we assume that $\|e_{\lambda}b - be_{\lambda}\| < \rho$ for all $b\in F$ and all $\lambda \in \Lambda$. We wish to choose $q := e_{\lambda}$ for $\lambda$ large enough. To do this, we make repeated use of the fact that if $c\in A$ is such that $\|\pi(c)\| < \delta$, then there exists $\lambda' \in \Lambda$ such that $\|c(1-e_{\lambda})\| < \delta$ for all $\lambda \geq \lambda'$.
\begin{enumerate}
\item If $a, b_1, b_2\in F$ are such that $\pi(b_1b_2a) \approx_{\delta} 0$, then by the previous remark, there exists $\lambda_0 \in \Lambda$ such that if $q := e_{\lambda}$ for any $\lambda \geq \lambda_0$, then $b_1(1-q)b_2(1-q)a \approx_{\delta} 0$. In that case,
\[
(1-q)^{1/2}b_1(1-q)b_2(1-q)^{1/2}a \approx_{2\delta} b_1(1-q)b_2(1-q)a \approx_{\delta} 0.
\]
\item If $a, b_0, b_1, \ldots, b_d\in F$ are such that $\sum_{j=0}^d \pi(b_ja) \approx_{\delta} \pi(a)$, then there exists $\lambda_1 \in \Lambda$ such that if $q := e_{\lambda}$ for any $\lambda \geq \lambda_1$, then
\[
\sum_{j=0}^d b_j(1-q)a \approx_{\delta} (1-q)a.
\]
In that case,
\[
(1-q)^{1/2}\left(\sum_{j=0}^d b_j\right)(1-q)^{1/2}a \approx_{(d+1)\delta} \left(\sum_{j=0}^d b_j\right)(1-q)a \approx_{\delta} (1-q)a
\]
\item If $a,b\in F$ are such that $[\pi(b), \pi(a)] \approx_{\delta} 0$, then there exists $\lambda_2 \in \Lambda$ such that if $q := e_{\lambda}$ for any $\lambda \geq \lambda_2$, then $[b(1-q),a] \approx_{\delta} 0$. Hence,
\[
[(1-q)^{1/2}b(1-q)^{1/2}, a] \approx_{2\delta} [b(1-q),a] \approx_{\delta} 0.
\]
\item If $a, b_1, b_2 \in F$ are such that $[\pi(b_1),\pi(b_2)]\pi(a) \approx_{\delta} 0$, then there exists $\lambda_3 \in \Lambda$ such that if $q := e_{\lambda}$ for any $\lambda \geq \lambda_3$, then $[b_1(1-q), b_2(1-q)]a \approx_{\delta} 0$. Hence,
\[
[(1-q)^{1/2}b_1(1-q)^{1/2}, (1-q)^{1/2}b_2(1-q)^{1/2}]a \approx_{4\delta} [b_1(1-q),b_2(1-q)] \approx_{\delta} 0.
\]
\item If $b_1, b_2 \in F$ are such that $\pi(b_1) \approx_{\delta} \pi(b_2)$, then there exists $\lambda_4 \in \Lambda$ such that if $q := e_{\lambda}$ for any $\lambda \geq \lambda_4$, then $b_1(1-q) \approx_{\delta} b_2(1-q)$. Thus,
\[
(1-q)^{1/2}b_1(1-q)^{1/2} \approx_{\delta} b_1(1-q) \approx_{\delta} b_2(1-q) \approx_{\delta} (1-q)^{1/2}b_2(1-q)^{1/2}.
\]
\item If $b\in F$, then if $q := e_{\lambda}$ for any $\lambda \in \Lambda$, we have $[b,e_{\lambda}] \approx_{\delta} 0$ by construction. Moreover, $[b,e_{\lambda}^{1/2}] \approx_{\delta} 0$ and $[b, (1-e_{\lambda})^{1/2}] \approx_{\delta} 0$ hold because of our choice of $\rho$.
\item Once again, if $q := e_{\lambda}$ for any $\lambda \in \Lambda$, then we have $\alpha_s(q)\approx_{\eta} q$ for any $s\in M$.
\end{enumerate}
Therefore, if we choose $\lambda \in \Lambda$ such that $\lambda \geq \lambda_j$ for all $0\leq j\leq 4$, then $q := e_{\lambda}$ satisfies the required conditions.
\end{proof}

\begin{theorem}\label{thm_extensions}
Let $A$ be a C*-algebra and $G$ be an amenable, finitely generated, residually finite group. Let
\[
0 \to (J,\alpha) \xrightarrow{\iota} (A,\beta)\xrightarrow{\pi} (B,\gamma) \to 0
\]
be a short exact sequence of $G$-algebras. Then
\begin{eqsplit}
\dr(\beta) &\leq \dr(\alpha) + \dr(\gamma) + 1, \text{ and } \\
\dr^c(\beta) &\leq \dr^c(\alpha) + \dr^c(\gamma) + 1
\end{eqsplit}
\end{theorem}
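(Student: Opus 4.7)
The plan is to construct, for given data $(F, M, S, \epsilon)$ with $F \ssubset A$, $M \ssubset G$, $S \ssubset C(\G)$ and $\epsilon > 0$, a $(d_1 + d_2 + 1, F, M, S, \epsilon)$-Rokhlin system for $\beta$, where $d_1 := \dr(\alpha)$ and $d_2 := \dr(\gamma)$ are assumed finite. Following the blueprint of \cite[Theorem 2.10]{hirshberg_phillips}, I would splice together a lifted Rokhlin system for $\gamma$ and a Rokhlin system for $\alpha$ using the separating positive element $q \in J$ furnished by \autoref{lem_extension_approx_unit}. Fix at the outset a tolerance $\eta$ small relative to $\epsilon$, $d_1+d_2$, and $|F|$ (concretely, $\eta$ of order $\epsilon/(d_1+d_2+c)$ for some absolute constant $c$) so that all accumulated errors fit inside $\epsilon$.

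The construction proceeds in three stages. First, apply \autoref{prop_rokhlin_system} to $\gamma$ with data $(\pi(F), M, S, \eta)$ to obtain c.c.p.\ maps $\varphi_0, \ldots, \varphi_{d_2} : C(\G) \to B$. Since $C(\G)$ is nuclear, the Choi--Effros lifting theorem produces c.c.p.\ maps $\tilde{\varphi}_j : C(\G) \to A$ with $\pi \circ \tilde{\varphi}_j = \varphi_j$. Second, invoke \autoref{lem_extension_approx_unit} on the enlarged finite set $F' := F \cup \{\tilde{\varphi}_j(f) : f \in S,\ 0 \leq j \leq d_2\}$ with parameters $M$ and $\eta$, producing a positive contraction $q \in J$ satisfying properties (1)--(7) of that lemma. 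Third, apply \autoref{prop_rokhlin_system} to $\alpha : G \curvearrowright J$ with the finite set $F_J \subset J$ obtained by sandwiching each element of $F'$ by $q^{1/2}$ on both sides; this yields a Rokhlin system $\psi_0^J, \ldots, \psi_{d_1}^J : C(\G) \to J$ for $\alpha$ at tolerance $\eta$.

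The assembled family is defined by
\[
\psi_i(f) := q^{1/2} \psi_i^J(f) q^{1/2} \text{ for } 0 \leq i \leq d_1, \qquad \psi_{d_1+1+j}(f) := (1-q)^{1/2}\tilde{\varphi}_j(f)(1-q)^{1/2} \text{ for } 0 \leq j \leq d_2.
\]
Each of the four conditions of \autoref{prop_rokhlin_system} for this family decomposes into a $q$-block contribution and a $(1-q)$-block contribution. The $q$-block estimates follow from the $F_J$-Rokhlin properties of the $\psi_i^J$ together with quasi-centrality of $q^{1/2}$ (item (6) of the lemma), while items (1)--(5) of \autoref{lem_extension_approx_unit} are exactly engineered to transfer the quotient-level estimates for $\varphi_j$ into $A$-level estimates for $(1-q)^{1/2}\tilde{\varphi}_j(\cdot)(1-q)^{1/2}$. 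The sum condition telescopes via $q + (1-q) = 1_A$ modulo $\eta$-errors, giving $\sum_\ell \psi_\ell(1)a \approx a$ for $a \in F$.

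The main obstacle is not any single estimate but the bookkeeping of the nested choices: $F_J$ depends on $q$, which depends on the lifts $\tilde{\varphi}_j$, which in turn depend on the original $\varphi_j$ chosen for $\pi(F)$. The commuting-towers variant $\dr^c(\beta) \leq \dr^c(\alpha) + \dr^c(\gamma) + 1$ follows the same scheme: take both input Rokhlin systems to have commuting towers, and note that $[q^{1/2}, (1-q)^{1/2}] = 0$, so cross-commutators between a $q$-block tower and a $(1-q)$-block tower reduce (modulo $\eta$) to commutators of elements lying in $J$, which are controlled once $F_J$ is further enlarged to include the appropriate $\tilde{\varphi}_j$-sandwiches.
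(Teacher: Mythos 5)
Your proposal follows essentially the same route as the paper's proof: lift a commuting Rokhlin system for $\gamma$ via Choi--Effros, use the quasi-central positive element $q \in J$ supplied by \autoref{lem_extension_approx_unit} to sandwich the lifts with $(1-q)^{1/2}$ and a Rokhlin system for $\alpha$ in $J$ with $q^{1/2}$, and splice the two blocks into a $(d_J+d_B+1)$-tower system whose unit sum telescopes via $q+(1-q)$. The only divergence is bookkeeping --- the paper's $F'$ also contains the translates $\widetilde{\psi}_k(\sigma_g(f))$ for $g\in M$ (needed for the equivariance estimate on the $(1-q)$-block), and its $F_J$ consists of $q$, $q^{1/2}$, $qa$ and $\widetilde{\psi}_k(f)q$ rather than $q^{1/2}$-sandwiches --- which is exactly the nested-choices issue you flag.
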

\begin{proof}
We prove the second inequality as the proof of the first inequality is similar (indeed, it is subsumed in the former). Assume that $d_J := \dr^c(\alpha) < \infty$ and $d_B := \dr^c(\gamma) < \infty$. Fix finite sets $F \ssubset A, S\ssubset C(\G), M\ssubset G$ and $\epsilon > 0$. Moreover, we assume without loss of generality that $e\in M$, $\|a\|\leq 1$ for all $a\in F$, $\|f\|\leq 1$ for all $f\in S$ and that $1_{C(\G)}\in S$. Let $\eta > 0$ be fixed to be chosen later.

By hypothesis, there exist $(d_B+1)$ c.c.p. maps
\[
\overline{\psi}_0, \overline{\psi}_1, \ldots, \overline{\psi}_{d_B} : C(\G)\to B
\]
that form a $(d_B, \pi(F), M, S, \eta/(d_B+6))$-commuting Rokhlin system. Since $G$ is finitely generated, $C(\G)$ is both separable and nuclear. By the Choi-Effros theorem, there exist c.c.p. maps $\widetilde{\psi}_0, \widetilde{\psi}_1, \ldots, \widetilde{\psi}_{d_B} : C(\G) \to A$ such that $\pi\circ \widetilde{\psi}_k = \overline{\psi}_k$ for all $0\leq k\leq d_B$. Let
\[
F' := F \cup\{\widetilde{\psi}_k(\sigma_g(f)) : f\in S, g\in M, 0\leq k\leq d_B\}
\]
and choose an element $q \in J$ satisfying the conditions of \autoref{lem_extension_approx_unit} for the tuple $(d_B, F', M, \eta)$. Define $\psi_0, \psi_1, \ldots, \psi_{d_B} : C(\G)\to A$ by
\[
\psi_k(f) := (1-q)^{1/2}\widetilde{\psi}_k(f)(1-q)^{1/2}
\]
Then each $\psi_k$ is a c.c.p. map. Moreover, by \autoref{lem_extension_approx_unit}, we see that
\begin{enumerate}
\item $\psi_k(f_1)\psi_k(f_2)a \approx_{\eta} 0$ for all $a\in F$ and $f_1, f_2 \in S$ such that $f_1\perp f_2$ and for all $0\leq k\leq d_B$.
\item $\sum_{k=0}^{d_B} \psi_k(1_{C(\G)})a \approx_{\eta} (1-q)a$ for any $a\in F$.
\item $[\psi_k(f), a] \approx_{\eta} a$ for all $a\in F, f\in S$ and $0\leq k\leq d_B$.
\item $[\psi_j(f_1),\psi_k(f_2)]a \approx_{\eta} 0$ for any $f_1, f_2 \in S, 0\leq j,k\leq d_B$ and any $a\in F$.
\item If $f\in S, a \in F$ and $g\in M$ and $0\leq k\leq d_B$, then
\begin{eqsplit}
\beta_g(\psi_k(f))a &\approx_{\eta} \beta_g(\widetilde{\psi}_k(f)(1-q))a \\
&\approx_{2\eta} \widetilde{\psi}_k(\sigma_g(f))(1-q)a \\
&\approx_{\eta} (1-q)^{1/2}\widetilde{\psi}_k(\sigma_g(f))(1-q)^{1/2}a \\
&= \psi_k(\sigma_g(f))a
\end{eqsplit}
\end{enumerate}
Now set
\[
F_J := \{q, q^{1/2}\}\cup\{qa : a\in F\}\cup\{\widetilde{\psi}_k(f)q : f\in S, 0\leq k\leq d_B\} \ssubset J,
\]
and $S_J := \{\sigma_g(f) : f\in S, g\in M\} \ssubset C(\G)$. Then, there exist $(d_J+1)$ c.c.p. maps
\[
\widetilde{\varphi}_0, \widetilde{\varphi}_1, \ldots, \widetilde{\varphi}_{d_J} : C(\G) \to J
\]
which form a $(d_J, F_J, M, S_J, \eta)$-commuting Rokhlin system. Define $\varphi_0, \varphi_1, \ldots, \varphi_{d_J} : C(\G)\to A$ by
\[
\varphi_j(f) := q^{1/2}\widetilde{\varphi}_j(f)q^{1/2}.
\]
Then each $\varphi_j$ is a c.c.p. map. Moreover, the following hold:
\begin{enumerate}
\item If $f_1, f_2 \in S$ are such that $f_1\perp f_2$ and $a\in F$, then
\[
\varphi_j(f_1)\varphi_j(f_2)a \approx_{\eta} q^{3/2}\widetilde{\varphi}_j(f_1)\widetilde{\varphi}_j(f_2)q^{1/2}a \approx_{\eta} 0.
\]
\item If $a\in F$, then
\[
\sum_{j=0}^{d_J} \varphi_j(1_{C(\G)})a \approx_{(d_J+1)\eta} \sum_{j=0}^{d_J} \widetilde{\varphi}_j(1_{C(\G)})qa \approx_{\eta} qa.
\]
\item If $f\in S$ and $a\in F$, then
\[
[\varphi_j(f), a] \approx_{2\eta} \widetilde{\varphi}_j(f)qa - aq\widetilde{\varphi}_j(f) \approx_{\eta} [\widetilde{\varphi}_j(f), qa] \approx_{\eta} 0.
\]
\item If $g\in M, f\in S$ and $a\in F$, then
\begin{eqsplit}
\alpha_g(\varphi_j(f))a &\approx_{\eta} \alpha_g(\widetilde{\varphi}_j(f)q)a \\
&\approx_{\eta} \alpha_g(\widetilde{\varphi}_j(f))qa \\
&\approx_{\eta} \widetilde{\varphi}_j(\sigma_g(f))qa \\
&\approx_{\eta} q^{1/2}\widetilde{\varphi}_j(\sigma_g(f))q^{1/2}a = \varphi_j(\sigma_g(f))a.
\end{eqsplit}
\item If $f_1, f_2\in S$ and $0\leq k,j\leq d_J$ and $a\in F$, then
\[
[\varphi_j(f_1),\varphi_k(f_2)]a \approx_{6\eta} [\widetilde{\varphi}_j(f_1),\widetilde{\varphi}_k(f_2)]q^2a \approx_{\eta} 0
\]
\item Finally, if $f_1, f_2\in S$ and $0\leq j\leq d_J, 0\leq k\leq d_B$ and $a\in F$, then
\begin{eqsplit}
[\varphi_j(f_1),\psi_k(f_2)]a &\approx_{6\eta} \widetilde{\varphi}_j(f_1)\widetilde{\psi}_k(f_2)q(1-q)a - \widetilde{\psi}_k(f_2)\widetilde{\varphi}_j(f_1)q(1-q)a  \\
&\approx_{\eta} \widetilde{\varphi}_j(f_1)\widetilde{\psi}_k(f_2)q(1-q)a - \widetilde{\psi}_k(f_2)q\widetilde{\varphi}_j(f_1)(1-q)a \\
&= [\widetilde{\varphi}_j(f_1), \widetilde{\psi}_k(f_2)q](1-q)a \\
&\approx_{\eta} 0.
\end{eqsplit}
\end{enumerate}
Thus, if $a\in F$, then
\[
\left(\sum_{k=0}^{d_B} \psi_k(1_{C(\G)}) + \sum_{j=0}^{d_J} \varphi_j(1_{C(\G)})\right)a \approx_{(d_J+2)\eta} a.
\]
Therefore, if $\eta > 0$ is chosen as $\eta := \epsilon/(d_J+8)$, then the system $\{\psi_0, \psi_1, \ldots, \psi_{d_B}, \varphi_0, \varphi_1, \ldots, \varphi_{d_J}\}$ forms a $(d_B+d_J+1, F, M, S, \epsilon)$-commuting Rokhlin system for the action $\beta$. By \autoref{prop_rokhlin_system_commuting}, we conclude that $\dr^c(\beta) \leq d_B + d_J + 1$.
\end{proof}

In the above theorem, we assume that $G$ was finitely generated to ensure that $C(\G)$ is separable so that the Choi-Effros theorem may be used. However, if we use the \autoref{lem_rokhlin_system_finite_index} (or its analogue for commuting towers) instead of \autoref{prop_rokhlin_system_commuting}, we can avoid this requirement. We have presented this proof here to highlight the role of $\G$ and also because the other proof is notationally even more cumbersome. Moreover, this proof also works mutatis mutandis for second countable compact groups, thus answering a question of Gardella \cite[Question 5.1]{gardella_compact}. \\

From the arguments given in both \autoref{thm_permanence} and \autoref{thm_extensions}, it is evident that many results that are true for actions of compact groups are also true for discrete, residually finite groups. Indeed, for a discrete group $G$, the profinite completion $\G$ plays the same role that the group does in the compact case, thanks to \autoref{prop_rokhlin_system}.\\

However, we now arrive at an apparent point of departure between the two theories. In \cite[Theorem 3.9]{gardella_compact}, Gardella has shown that for actions of finite dimensional, compact groups, the restriction of an action with finite Rokhlin dimension to a subgroup also has finite Rokhlin dimension. In trying to extend this result to discrete groups, we arrived at an impasse. We were unable to prove the theorem in full generality, but we were able to prove it for a large class of groups (including all finitely generated, virtually abelian groups). It is to these ideas that we now turn and we begin with a lemma that shows that subgroups of finite index play a crucial role in this context.

\begin{lemma}\label{lem_finite_index_product}
Let $G$ be a residually finite group and $H\lf G$. Then there is a finite set $Y$ and an $H$-equivariant homeomorphism
\[
\theta : Y\times \overline{H} \to \overline{G}
\]
where $H$ acts on $Y$ trivially and by left-translation on both $\overline{H}$ and $\overline{G}$.
\end{lemma}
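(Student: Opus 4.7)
My plan is to realize $Y$ as the set of right cosets $H\backslash G$ and rewrite both $\G$ and $\overline{H}$ as inverse limits over a common cofinal family of normal subgroups. The crucial observation is that if a normal finite-index subgroup $K\triangleleft G$ happens to lie inside $H$, then $G/K$ decomposes as an $H$-set into a disjoint union of copies of $H/K$ indexed by $H\backslash G$, with $H$ acting trivially on the indexing set. Passing to the inverse limit will then give the claim.

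The first step is to fix right coset representatives $g_1, g_2, \ldots, g_n$ for $H$ in $G$, set $Y := \{g_1, \ldots, g_n\}$ with the discrete topology, and verify a cofinality statement. Let $\J := \{K\in \I_G : K\subset H\}$, partially ordered by reverse inclusion as in \autoref{prop_profinite_completion}. Using the normal core $H_G = \bigcap_{g\in G} gHg^{-1}$, which lies in $\I_G$ because $[G:H] < \infty$, I can show that $\J$ is cofinal in $\I_G$: for any $K\in \I_G$ the intersection $K\cap H_G$ lies in $\J$ and is contained in $K$. A parallel argument — replacing $H$ by $L$ and using that $[G:L]=[G:H][H:L]<\infty$ for any $L\in \I_H$ — shows $\J$ is also cofinal in $\I_H$. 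Consequently $\G = \varprojlim_{K\in \J} G/K$ and $\overline{H} = \varprojlim_{K\in \J} H/K$.

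Next, for each $K\in \J$ I define $\theta_K : Y \times H/K \to G/K$ by $\theta_K(g_i, hK) := hg_i K$. Well-definedness and bijectivity follow from the normality of $K$ in $G$ (so that $g_i K g_i^{-1} = K\subset H$), together with the uniqueness of the factorization $g = hg_i$ with $h\in H$ and $g_i\in Y$. The formula makes it immediate that $\theta_K$ intertwines the action that is trivial on $Y$ and left translation on $H/K$ with the left translation action on $G/K$. It is also immediate from the formula that the maps $\theta_K$ are compatible with the bonding maps of the two inverse systems: for $K_1 \subset K_2$ in $\J$, the quotient map $G/K_1 \to G/K_2$ simply restricts to the corresponding quotient map $H/K_1 \to H/K_2$ on the $H$-factor, and acts as the identity on the $Y$-factor.

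Assembling these, the family $\{\theta_K\}_{K\in \J}$ passes to an $H$-equivariant bijection $\theta : Y \times \overline{H} \to \G$; since both sides carry the inverse limit topology and each $\theta_K$ is a homeomorphism of finite discrete spaces, $\theta$ is automatically a homeomorphism. I do not expect a serious obstacle here. The one point that requires care — and where a naive attempt can fail — is the use of \emph{right} rather than \emph{left} cosets: with left cosets, the action of $H$ on $G/H$ would permute the cosets non-trivially and the product decomposition would fail to be $H$-equivariant in the required sense.
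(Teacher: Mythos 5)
Your argument is correct, and the key decomposition is the same as the paper's (the map is, coordinatewise, $(y,hK)\mapsto hyK$ over the cofinal family $\{K\nf G: K\subset H\}$), but your execution differs in two ways worth noting. First, you work entirely at the level of the finite quotients, establishing an isomorphism of inverse systems $\{Y\times H/K\}\to\{G/K\}$ and then invoking functoriality of $\varprojlim$, whereas the paper defines $\theta$ directly on $Y\times\overline{H}$ and verifies continuity, injectivity and surjectivity by hand on the profinite completions; your route makes the topological part automatic. Second, the paper deliberately imports Ore's theorem to choose $Y$ that is \emph{simultaneously} a set of left and right coset representatives, using right cosets for injectivity and left cosets for surjectivity; your proof shows this is unnecessary, since with $G=\bigsqcup_i Hg_i$ the factorization $g=hg_i$ already gives surjectivity of each $\theta_K$ directly in the form $hg_iK$, and normality of $K$ in $G$ (so $g_i^{-1}Kg_i=K\subset H$) handles well-definedness and injectivity. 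Your closing remark about why right rather than left cosets are forced by $H$-equivariance is exactly the right point of caution. The only step you should spell out when writing this up is the cofinality of $\J$ in $\I_H$ (needed to identify $\varprojlim_{K\in\J}H/K$ with $\overline{H}$): given $L\in\I_H$, one passes to the normal core of $L$ in $G$, which lies in $\J$ and is contained in $L$; your sketch gestures at this correctly but compresses it.
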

\begin{proof}
Let $\J_G := \{K \nf G : K\subset H\}$, then we claim that $\J_G$ is cofinal in $\mathcal{I}_G$. To see this, choose $L \in \mathcal{I}_G$. Since $H$ has finite index in $G$, there is a subgroup $L'\subset H$ such that $L'\nf G$. Then $K := L\cap L'\in \J_G$ and $K\subset L$. Therefore $\J_G$ is cofinal in $\I_G$ and we conclude from \cite[Lemma 1.1.9]{ribes} that
\[
\overline{G} \cong \varprojlim_{\J_G} G/K \text{ and } \overline{H} \cong \varprojlim_{\J_G} H/K.
\]
We need one more important ingredient: By a theorem of Ore \cite[Theorem 4.3]{ore}, there is a finite set $Y$ of left-coset representatives of $H$ in $G$ such that $Y$ is also a set of right-coset representatives. In other words,
\[
G = \bigsqcup_{y\in Y} yH = \bigsqcup_{y\in Y} Hy.
\]
We fix one such set $Y$ and define $\theta : Y\times \overline{H} \to \overline{G}$ by
\[
\theta(w, (h_KK)_{K\in \J_G}) := (h_KwK)_{K\in \J_G}
\]
To see that $\theta$ is well-defined, let $(w, (h_KK))\in Y\times \overline{H}$ and fix $K, L\in \J_G$ with $K\subset L$. Then $h_KwL = h_KLw = h_LLw = h_LwL$. Therefore $\theta(w, (h_KK)) \in \G$. Also, if $(w, (h_KK)) = (v,(g_KK)) \in Y\times \overline{H}$ are equal, then $w=v$ and $h_KK = g_KK$ for all $K \in \J_G$. Hence,
\[
h_KwK = h_KKw = g_KKw = g_KwK = g_KvK.
\]
Therefore, $\theta(w, (h_KK)) = \theta(v, (g_KK))$. We claim that $\theta$ is the homeomorphism we are looking for.
\begin{enumerate}
\item $\theta$ is continuous: Let $(w, (h_KK)) \in Y\times \overline{H}$ and let $\overline{g} = (g_KK) = \theta(w, (h_KK)) \in \G$. Let $U$ be an open set in $\G$ containing $\overline{g}$, then there is a finite set $F\subset \J_G$ such that
\[
V = \bigcap_{L\in F} (\pi^G_L)^{-1}(\{g_LL\}) = \bigcap_{L\in F}(\pi^G_L)^{-1}(\{h_LwL\}) \subset U.
\]
(Note that we write $\pi^G_L : \G\to G/L$ and $\pi^H_L : \overline{H}\to H/L$ for the natural maps.) Let $(v,(t_KK))\in \theta^{-1}(V)$ and fix $L \in F$. Then, $t_LvL = h_LwL$. Since $L\< G$, this implies that $t_LLv = h_LLw$. Since $L\subset H$ and $t_L, h_L \in H$, it follows that $Hv = Hw$. Since $Y$ is a set of right coset representatives of $H$, we conclude that $v=w$. Hence,
\[
t_LwL = h_LwL.
\]
Since $L\< G$, $h_LL  =t_LL$. This is true for each $L \in F$, so $(t_KK) \in W$ where
\[
W = \bigcap_{L\in F} (\pi^H_L)^{-1}(\{h_LL\}).
\]
Therefore, $\{w\} \times W$ is an open set in $Y\times \overline{H}$, it is contained in $\theta^{-1}(V) \subset \theta^{-1}(U)$ and $\overline{g} \in \{w\}\times W$. This is true for any $(w,(h_KK))\in Y\times \overline{H}$, so $\theta$ is continuous.
\item $\theta$ is injective: If $\theta(w, (h_KK)) = \theta(v,(t_KK))$, then for any $L \in \J_G$, $h_LwL = t_LvL$. As above, this implies that $Hw = Hv$. Since $Y$ is a set of right coset reprentatives, it follows that $w=v$. Then,
\[
h_LwL = t_LwL
\]
holds for all $L \in \J_G$. Since each $L\in \J_G$ is normal in $G$, we conclude that $h_LL = t_LL$, so that $(h_KK) = (t_KK)$. 
\item $\theta$ is surjective: If $\overline{g} = (g_LL) \in \overline{G}$, then for each $L\in \J_G$, there exists $w_L \in Y$ and $h_L \in H$ such that $g_L = w_Lh_L$. Since $\overline{g} \in \G$, it follows that whenever $K,L \in \J_G$ with $K\subset L$, we have
\[
w_Kh_KL = w_Lh_LL.
\]
Once again, this implies that $w_KH = w_LH$. Since $Y$ is a set of left coset representatives of $H$ in $G$, we conclude that $w_K = w_L$ whenever $K\subset L$. Now suppose $L_1, L_2\in \J_G$, then there exists $K\in \J_G$ such that $K\subset L_1\cap L_2$. Then it follows that
\[
w_{L_1} = w_K = w_{L_2}.
\]
Therefore, there is a commmon value $w := w_L$ for all $L\in \J_G$. Now consider $\overline{h} = (h_LL) \in \prod_{L\in \J_G} H/L$. The fact that $\overline{g}\in \G$ implies that whenever $K,L\in \J_G$ with $K\subset L$, one has
\[
wh_KL = wh_LL.
\]
Once again, since $L\< G$, this implies that $h_KL = h_LL$. We conclude that $\overline{h} \in \overline{H}$ and thus $\overline{g} = \theta(w, \overline{h})$. 
\end{enumerate}
Since $Y\times \overline{H}$ is compact and $\overline{G}$ is Hausdorff, $\theta$ is a homeomorphism. That $\theta$ is $H$-equivariant is easily verified. This completes the proof.

\end{proof}

\begin{definition}
Let $G$ be a discrete group and $H$ be a subgroup of $G$. We say that $H$ is a retract of $G$ if there is a group homomorphism $\rho : G\to H$ which restricts to the identity map on $H$. We say that $H$ is a virtual retract of $G$ (denoted by $H\leq_{vr} G$) if there exists $K\lf G$ such that $H$ is a retract of $K$.
\end{definition}

Suppose $H$ is a retract of a group $K$ and $\rho : K\to H$ is a group homomorphism such that $\rho\lvert_H = \mathrm{id}_H$. For each $L \<_{fin} H$, $\rho^{-1}(L) \<_{fin} K$ and there is a natural isomorphism $\rho_L : K/\rho^{-1}(L) \to H/L$. Since $\rho\lvert_H = \text{id}_H$, this map respects the left-translation action of $H$ on either group. Moreover, the universal property of the profinite completion (part (5) of \autoref{prop_profinite_completion}) allows us to build a continuous group homomorphism $\overline{\rho} : \overline{K} \to \overline{H}$ such that $\overline{\rho}\lvert_K = \rho$. In particular, $\overline{\rho}$ also respects the action of $H$ on both $\overline{K}$ and $\overline{H}$. It is this property that will be crucial to us in the following proof.

\begin{theorem}\label{thm_restriction_subgroup}
Let $G$ be a residually finite group and $H$ be a subgroup of $G$. Let $\alpha : G\to \Aut(A)$ be an action of $G$ on a C*-algebra $A$ and $\alpha_H : H\to \Aut(A)$ be the restricted action of $H$ on $A$. If $H\leq_{vr} G$, then $\dr(\alpha_H) \leq \dr(\alpha)$ and $\dr^c(\alpha_H) \leq \dr^c(\alpha)$.
\end{theorem}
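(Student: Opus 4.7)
The plan is to decompose the virtual retract hypothesis into two reductions: first reduce to the case where $H \leq_{fin} G$ (using the finite index subgroup provided by the definition of virtual retract) and then handle the retract case separately. In both cases the goal is to construct a unital, $H$-equivariant $*$-homomorphism $\iota : C(\overline{H}) \to C(\overline{K})$ (with $K$ either $G$ or the intermediate finite-index subgroup) and to use it to pull back a Rokhlin system for the larger action to one for $\alpha_H$. Specifically, given any $(d, F, M, S', \epsilon)$-Rokhlin system $\psi_0, \ldots, \psi_d : C(\overline{K}) \to A$ for the restricted action of the intermediate group, where $S' = \iota(S)$, I would set $\varphi_\ell := \psi_\ell \circ \iota$ and verify all four conditions of \autoref{prop_rokhlin_system}: orthogonality and the sum-to-unit condition are automatic because $\iota$ is a unital $*$-homomorphism; the approximate commutation and equivariance conditions pass through because $\iota$ is $H$-equivariant.

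For the finite-index reduction, I apply \autoref{lem_finite_index_product} to get the $H$-equivariant homeomorphism $\theta : Y \times \overline{H} \to \overline{G}$ with $H$ acting trivially on the finite set $Y$. The map $\operatorname{pr}_2 \circ \theta^{-1} : \overline{G} \to \overline{H}$ is continuous, surjective and $H$-equivariant, so its pullback $\iota := (\operatorname{pr}_2 \circ \theta^{-1})^* : C(\overline{H}) \to C(\overline{G})$ is a unital, injective, $H$-equivariant $*$-homomorphism. For the retract reduction, if $\rho : K \to H$ is a retraction ($\rho|_H = \mathrm{id}_H$), then $\rho$ is surjective, and the universal property of the profinite completion (part (5) of \autoref{prop_profinite_completion}) extends $\rho$ to a continuous $H$-equivariant homomorphism $\overline{\rho} : \overline{K} \to \overline{H}$. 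Surjectivity of $\overline{\rho}$ follows because $\iota(H) \subset \overline{\rho}(\iota(K))$ is dense and $\overline{\rho}(\overline{K})$ is closed by compactness; therefore $\iota := \overline{\rho}^* : C(\overline{H}) \to C(\overline{K})$ is again a unital, injective, $H$-equivariant $*$-homomorphism.

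Combining the two cases: by definition of virtual retract there is $K \leq_{fin} G$ such that $H$ is a retract of $K$. The finite-index reduction gives $\dr(\alpha_K) \leq \dr(\alpha)$, and then the retract reduction gives $\dr(\alpha_H) \leq \dr(\alpha_K)$, so $\dr(\alpha_H) \leq \dr(\alpha)$. The commuting-towers variant is immediate from the same construction, since the ranges of the $\varphi_\ell$ are contained in the ranges of the $\psi_\ell$: if the latter pairwise commute (approximately, on $F$), so do the former. Thus $\dr^c(\alpha_H) \leq \dr^c(\alpha)$ by \autoref{prop_rokhlin_system_commuting}.

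I expect the only subtle point to be verifying that the pullback $\iota$ really is $H$-equivariant in each case; in the retract case this requires care because one must check that $\overline{\rho} \circ \beta^{\overline{K}}_h = \beta^{\overline{H}}_h \circ \overline{\rho}$ for $h \in H$, which follows from the density of $\iota(K)$ in $\overline{K}$ together with $\rho|_H = \mathrm{id}_H$. Once $H$-equivariance of $\iota$ is established the rest is a routine computation with approximations and the fact that $*$-homomorphisms send orthogonal elements to orthogonal elements and preserve units.
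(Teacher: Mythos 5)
Your proposal is correct and follows essentially the same route as the paper: the paper also uses \autoref{lem_finite_index_product} to get the $K$-equivariant projection $\mu:\G\to\overline{K}$ and the universal property of the profinite completion to get $\overline{\rho}:\overline{K}\to\overline{H}$, the only cosmetic difference being that the paper composes these into a single $H$-equivariant map $p=\overline{\rho}\circ\mu:\G\to\overline{H}$ and pulls back once, whereas you factor the inequality through the intermediate claim $\dr(\alpha_K)\leq\dr(\alpha)$. Both versions verify the Rokhlin system conditions identically, so there is no gap.
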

\begin{proof}
We prove the first inequality since the argument for the second is similar. Assume without loss of generality that $d := \dr(\alpha) < \infty$. Choose $K\lf G$ and a homomorphism $\rho : K\to H$ that is identity on $H$. By \autoref{lem_finite_index_product}, there is a finite set $Y$ and a $K$-equivariant homeomorphism $\theta : \G\to Y\times \overline{K}$. Projecting onto the second component gives us a continuous, surjective, $K$-equivariant map $\mu : \G\to \overline{K}$. Moreover, as mentioned above, there is a continuous group homomorphism $\overline{\rho} : \overline{K} \to \overline{H}$ that respects the left-translation action of $H$ on both $\overline{K}$ and $\overline{H}$. Therefore, $p := \overline{\rho}\circ \mu : \G\to \overline{H}$ is a continuous $H$-equivariant map, which induces a unital $\ast$-homomorphism
\[
p^{\ast} : C(\overline{H}) \to C(\G)
\]
that is equivariant with respect to the action of $H$ on both algebras. To prove that $\dr(\alpha_H) \leq d$, choose finite sets $F\ssubset A, S\ssubset C(\overline{H}), M\ssubset H$ and $\epsilon > 0$. By hypothesis, there exist $(d+1)$ c.c.p. maps
\[
\psi_0, \psi_1, \ldots, \psi_d : C(\G) \to A
\]
which form a $(d,F, M, p^{\ast}(S),\epsilon)$-Rokhlin system (by \autoref{prop_rokhlin_system}). Hence, the system $\{\psi_{\ell}\circ p^{\ast} : 0\leq \ell\leq d\}$ forms a $(d,F, M, S,\epsilon)$-Rokhlin system for the action $\alpha_H$. We conclude that $\dr(\alpha_H) \leq d$.
\end{proof}

To understand how \autoref{thm_restriction_subgroup} may be used, we describe a few notions from geometric group theory. These have been explored in \cite{minasyan} and the reader will find a wealth of information concerning these ideas in that article.

\begin{definition}\label{defn_vrc_lr}
Let $G$ be a group. We say that $G$ has property (VRC) (for `virtual retractions onto cyclic subgroups') if every cyclic subgroup is a virtual retract of $G$. We say that $G$ has property (LR) (for `local retractions') if every finitely generated subgroup is a virtual retract of $G$.
\end{definition}

\begin{rem}\label{rem_vrc_lr}
We list a number of facts concerning these properties. Once again, the reader is referred to \cite{minasyan} for proofs of all of these.
~\begin{enumerate}
\item If $G$ satisfies (VRC), then it is residually finite.
\item If $G$ is a residually finite group and $H < G$ is a finite subgroup, then $H\leq_{vr} G$.
\item If $K, H$ are subgroups of $G$ such that $K \leq_{vr} H$ and $H\leq_{vr} G$, then $K\leq_{vr} G$.
\item Clearly, if $G$ satisfies (LR), then it satisfies (VRC), but the converse is false. However, if $G$ satisfies (VRC), then every finitely generated virtually abelian subgroup is a virtual retract of $G$.
\item If $G$ is a finitely generated, virtually abelian group, then every subgroup $H$ of $G$ is a virtual retract of $G$.
\item Free groups satisfy (LR) and virtually free groups satisfy (VRC).
\item If $K$ is a finite index subgroup of $G$ and $K$ satisfies (VRC), then $G$ satisfies (VRC).
\item The Heisenberg group $H$ (the group of all $3\times 3$ unitriangular matrices with integer coefficients) does not satisfy (VRC).
\end{enumerate}
\end{rem}

The following consequence of part (5) of \autoref{rem_vrc_lr} and \autoref{thm_restriction_subgroup} bears repeating.

\begin{cor}
Let $G$ be a finitely generated, virtually abelian group and $H$ be a subgroup of $G$. Let $\alpha : G\to \Aut(A)$ be an action of $G$ on a C*-algebra $A$ and $\alpha_H : H\to \Aut(A)$ be the restricted action of $H$ on $A$. Then $\dr(\alpha_H) \leq \dr(\alpha)$ and $\dr^c(\alpha_H) \leq \dr^c(\alpha)$.
\end{cor}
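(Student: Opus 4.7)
The plan is essentially to observe that this is an immediate combination of two results already established in the excerpt. The hypothesis on $G$ is tailored exactly to produce the input required by \autoref{thm_restriction_subgroup}, so there is no new estimate to prove; the work has already been done.

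First I would invoke part (5) of \autoref{rem_vrc_lr}: since $G$ is a finitely generated, virtually abelian group, every subgroup $H$ of $G$ is a virtual retract of $G$, that is, $H\leq_{vr} G$. This is the only nontrivial step, and it is cited rather than reproved (the argument is given in \cite{minasyan}). I would then apply \autoref{thm_restriction_subgroup} to the action $\alpha:G\to\Aut(A)$ and the subgroup $H$, which immediately yields
\[
\dr(\alpha_H) \leq \dr(\alpha) \quad \text{and} \quad \dr^c(\alpha_H) \leq \dr^c(\alpha).
\]

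The only possible subtlety is to confirm that $G$ is in fact residually finite, as required by \autoref{thm_restriction_subgroup}. But a finitely generated, virtually abelian group contains a finite index subgroup isomorphic to $\Z^n$ for some $n$, and such a group is residually finite (this also follows from (VRC) together with \autoref{rem_vrc_lr}(1) and (7)). So no additional hypothesis is needed, and the corollary follows in one line once the two cited facts are in hand. There is no principal obstacle; the corollary is included to highlight a concrete and widely applicable class of groups to which \autoref{thm_restriction_subgroup} applies.
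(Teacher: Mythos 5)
Your proposal is correct and is exactly the paper's argument: the corollary is stated as an immediate consequence of \autoref{rem_vrc_lr}(5) and \autoref{thm_restriction_subgroup}, with no further proof given. Your additional check that a finitely generated, virtually abelian group is residually finite is a reasonable (and correct) piece of due diligence that the paper leaves implicit.
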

\section{Actions on $C_0(X)$-algebras and Commutative C*-algebras}\label{sec_cx_algebras}

Given a group $G$ and a locally compact metric space $X$, every action $\widetilde{\alpha} : G\curvearrowright X$ of $G$ on $X$ induces an action $\alpha : G\to \Aut(C_0(X))$ and vice-versa. If $G$ is compact and finite dimensional, then $\widetilde{\alpha}$ is free if and only if $\alpha$ has finite Rokhlin dimension (see \cite[Theorem 4.1]{gardella_compact} and \cite[Lemma 4.1]{self_rokhlin}). However, if $G$ is a discrete group, then the analogous statement is not known in full generality. What we do know is the following result, due to Szab\'{o}, Wu and Zacharias. Note that the version stated here (for locally compact spaces) is more general than the one proved in \cite{szabo_wu_zacharias}. However, Szab\'{o} has proved this more general statement in his PhD thesis (see \cite[Remark 8.7]{szabo_wu_zacharias}).

\begin{theorem}\cite[Corollary 8.5]{szabo_wu_zacharias}
Let $G$ be an infinite, finitely generated, nilpotent group and $X$ be a locally compact metric space of finite covering dimension. If $\widetilde{\alpha} : G\curvearrowright X$ is a free action of $G$ on $X$, then the induced action $\alpha : G\to \Aut(C_0(X))$ has finite Rokhlin dimension.
\end{theorem}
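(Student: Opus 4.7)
The plan is to combine two ingredients via Proposition \ref{prop_rokhlin_system}: a coarse-geometric input (finitely generated nilpotent groups have box spaces of uniformly finite asymptotic dimension) and a topological input (a free action on a finite-dimensional space admits partitions of unity whose orders are controlled by $\dim(X)$).

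First, I appeal to Proposition \ref{prop_rokhlin_system}: it suffices to produce, for every $F \ssubset C_0(X)$, $M \ssubset G$, $S \ssubset C(\G)$ and $\epsilon > 0$, a $(d, F, M, S, \epsilon)$-Rokhlin system for a uniform $d$. By Lemma \ref{lem_inductive_limit_profinite_completion}, I may assume $S = \pi_H^*(S')$ for a finite $S' \subset C(G/H)$, with $H \nf G$ chosen as deep as needed in $\I_G$. Let $K \subset X$ be a compact set containing the supports of every $f \in F$.

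Next, I exploit freeness of $\widetilde{\alpha}$: by compactness of $K$ and continuity, I can shrink $H$ so that the cosets $gH$ with $g \in M \cdot M^{-1}$ act on $K$ with distinct images separated by some definite $\delta > 0$; thus the orbit structure of the $G/H$-action on $K$ faithfully reflects a portion of the coset combinatorics of $G$. Then, using the finite asymptotic dimension of the box space of $G$ (which is uniform in $H$ because $G$ is nilpotent and hence has finite Hirsch length), I obtain a partition of $G/H$ into $d_G + 1$ colour classes such that elements of the same colour are pairwise $M$-separated. For each colour class, the translates of a bump function supported in a small neighbourhood of a "fundamental piece" of $K$ are pairwise disjoint, and their sum is a positive function on $X$. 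Combining these with a partition of unity on $K$ of order $\dim(X)+1$ yields $d+1 := (d_G+1)(\dim(X)+1)$ c.c.p.\ maps $\psi_0, \dots, \psi_d : C(\G) \to C_0(X)$.

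The main obstacle will be verifying conditions (2) and (4) of Proposition \ref{prop_rokhlin_system} simultaneously. Approximate equivariance (2) demands that the supports of the tower pieces be small enough that an $M$-translate of one piece lands exactly within the tower piece indexed by the translated coset, while the covering condition (4) demands that these pieces together sum to $\approx 1$ on $K$. This is achieved by a careful nested choice: first fix $\dim(X)$ and $d_G$, then choose $H$ so that $G/H$ separates $K$ at a scale far larger than the translation set $M$, and finally choose the partition-of-unity supports small compared to this separation scale. Once this is done, orthogonality (3) follows from the disjointness of the tower pieces within each colour class, and approximate commutation (1) is automatic because the target algebra $C_0(X)$ is abelian.
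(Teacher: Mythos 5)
First, note that the paper does not prove this statement at all: it is quoted verbatim from \cite[Corollary 8.5]{szabo_wu_zacharias} (with the locally compact generalization attributed to Szab\'o's thesis), so there is no in-paper argument to compare against. Your proposal is therefore an attempt to reconstruct the external proof, and while it correctly identifies the two quantitative inputs (finite asymptotic dimension of the box space of a finitely generated nilpotent group, and the covering dimension of $X$) and the product-type form of the final bound, it has a genuine gap at its technical core.

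The gap is the construction of the tower functions themselves. You write that the cosets $gH$ with $g\in M\cdot M^{-1}$ ``act on $K$ with distinct images separated by some definite $\delta>0$,'' but $G/H$ does not act on $X$; only $G$ does, and freeness of $\widetilde\alpha$ gives no canonical way to attach to each coset $\overline{s}\in G/H$ an open set $U_{\overline{s}}\subset X$ such that the $U_{\overline{s}}$ are pairwise disjoint (within a colour class), approximately cover $K$, and satisfy $\widetilde\alpha_g(U_{\overline{s}})\approx U_{\overline{gs}}$ for $g\in M$ --- which is exactly what conditions (2), (3), (4) of \autoref{prop_rokhlin_system} demand of $\psi_\ell(\delta_{\overline{s}})$. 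Your ``bump function supported in a small neighbourhood of a fundamental piece of $K$'' presupposes something like an approximate fundamental domain, but a free action of an infinite group on a (locally) compact space has infinite orbits that accumulate, and no fundamental domain exists even approximately without further work; producing such castles/markers is precisely the content of the topological Rokhlin-type lemmas (finite \emph{amenability dimension} of free actions on finite-dimensional spaces) that occupy Sections 7--8 of \cite{szabo_wu_zacharias}, where nilpotency enters a second time beyond the box-space estimate. Likewise, the step where ``$M$-separated'' colour classes in the box space of $G$ are converted into disjointness of translates inside $X$ requires a quantitative link between the coarse geometry of $G/H$ and the dynamics on $K$ that you assert but do not supply. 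Until you either prove a marker/castle lemma for free nilpotent actions on finite-dimensional spaces or import finite amenability dimension as a black box, the argument does not close.
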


Our goal in this section is to investigate this question further, and the first observation is the following.

\begin{prop}\label{prop_frd_implies_free}
Let $X$ be a locally compact Hausdorff space and $G$ be a residually finite group. Let $\widetilde{\alpha} : G\curvearrowright X$ be such that the induced action $\alpha : G\to \Aut(C_0(X))$ has finite Rokhlin dimension. Then $\widetilde{\alpha}$ is free.
\end{prop}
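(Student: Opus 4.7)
The plan is to argue by contradiction: suppose $\widetilde{\alpha}$ is not free, so there exist $g\in G\setminus\{e\}$ and $x\in X$ with $\widetilde{\alpha}_g(x)=x$, and derive a contradiction from the finiteness of $\dr(\alpha)$. The first step uses residual finiteness to produce a \emph{normal} finite-index subgroup that avoids $g$: start with any $K\lf G$ with $g\notin K$, and take the normal core $N:=\bigcap_{t\in G} tKt^{-1}$, which has finite index in $G$ as the kernel of the action $G\curvearrowright G/K$. Since $N\<G$ and $g\notin N$, the translation action of $g$ on $G/N$ is \emph{free}: if $g\cdot yN=yN$ then $y^{-1}gy\in N$, and normality forces $g\in N$, a contradiction. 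In particular, $gy\neq y$ in $G/N$ for every coset $y$.

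Next I would apply \autoref{lem_rokhlin_system_finite_index} with $d:=\dr(\alpha)\geq \dr(\alpha,N)$, feeding it $F=\{a\}$ for a positive contraction $a\in C_0(X)$ with $a(x)=1$ (available by local compactness), $M=\{g\}$, $S=\{1_{C(G/N)}\}\cup\{\chi_{\{yN\}}:y\in G/N\}$, and a small $\epsilon>0$ to be determined. This yields c.c.p.\ maps $\psi_0,\ldots,\psi_d:C(G/N)\to C_0(X)$; condition (1) of the lemma is automatic since $C_0(X)$ is commutative. I would then set $c_\ell(y):=\psi_\ell(\chi_{\{yN\}})(x)\in[0,1]$, a non-negative scalar by positivity of $\psi_\ell$, and evaluate the remaining Rokhlin conditions at $x$. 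Using $a(x)=1$ and the identity $\alpha_g(h)(x)=h(\widetilde{\alpha}_{g^{-1}}(x))=h(x)$ that follows from $\widetilde{\alpha}_g(x)=x$, the conditions collapse to three scalar statements: (i) $\sum_{\ell,y}c_\ell(y)\approx_\epsilon 1$ from condition (4) together with $\sum_y\chi_{\{yN\}}=1$; (ii) $|c_\ell(gy)-c_\ell(y)|\leq\epsilon$ from condition (2), since $\sigma^N_g(\chi_{\{yN\}})=\chi_{\{gyN\}}$; and (iii) $c_\ell(y)c_\ell(y')\leq\epsilon$ whenever $y\neq y'$, from condition (3).

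The contradiction is extracted by applying (iii) to the distinct pair $(y,gy)$ and combining with (ii) to obtain $c_\ell(y)(c_\ell(y)-\epsilon)\leq c_\ell(y)c_\ell(gy)\leq\epsilon$. Since $c_\ell(y)\leq 1$, this forces $c_\ell(y)\leq\sqrt{2\epsilon}$ for every $\ell$ and $y$, whence $\sum_{\ell,y}c_\ell(y)\leq (d+1)\,|G/N|\,\sqrt{2\epsilon}$. For $\epsilon$ chosen small enough relative to $d$ and $|G/N|$, the right-hand side is strictly less than $1-\epsilon$, contradicting (i).

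The only real subtlety is bookkeeping: one has to verify that the operator-norm inequalities coming out of the Rokhlin conditions survive the passage through multiplication by $a$ and evaluation at $x$. Conceptually, the argument pits the approximate $g$-equivariance of the $\psi_\ell$ (which at a fixed point forces $c_\ell$ to be nearly constant along $\langle g\rangle$-orbits) against their approximate orthogonality (which on orbits of size $\geq 2$ forbids any such constant value from being appreciably positive); the two can only be reconciled by having all $c_\ell(y)$ nearly vanish at $x$, which is incompatible with the normalization of condition (4).
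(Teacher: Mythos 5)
Your proposal is correct and follows essentially the same route as the paper: both reduce to a normal finite-index subgroup avoiding $g$, invoke \autoref{lem_rokhlin_system_finite_index} with the point masses $\{\delta_{\overline{s}}\}$ as the test set, evaluate at the fixed point, and play the approximate equivariance of condition (2) against the approximate orthogonality of condition (3) to force every tower element to be of size $O(\sqrt{\epsilon})$ at $x$, contradicting condition (4). The only cosmetic difference is that you spell out the normal-core argument and organize the estimates through the scalars $c_\ell(y)$, whereas the paper works directly with the functions $y^{(\ell)}_{\overline{s}}$.
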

\begin{proof}
Let $d := \dr(\alpha) < \infty$ and suppose $x\in X$ and $g\in G$ are such that $\widetilde{\alpha_g}^{-1}(x) = x$. If $g\neq e$, then we may choose $H\nf G$ such that $g\notin H$. Choose a function $f\in C_0(X)$ such that $f(x) = 1$. Let $k := [G:H], F := \{f\} \ssubset C_0(X), M := \{g\} \ssubset G, S := \{\delta_{\overline{s}} : \overline{s} \in G/H\} \ssubset C(G/H)$ and fix an $\epsilon > 0$. By \autoref{lem_rokhlin_system_finite_index}, choose $(d+1)$ c.c.p. maps $\psi_0, \psi_1, \ldots, \psi_d : C(G/H) \to C_0(X)$ which form a $(H,d,F,M,S,\epsilon)$-Rokhlin system. For $\overline{s} \in G/H$ and $0\leq \ell\leq d$, define $y_{\overline{s}}^{(\ell)} := \psi_{\ell}(\delta_{\overline{s}})$. Then,
\begin{eqsplit}
y_{\overline{s}}^{(\ell)}(x)^2 &= y_{\overline{s}}^{(\ell)}(x)y_{\overline{s}}^{(\ell)}(\widetilde{\alpha_g}^{-1}(x)) \\
&= y_{\overline{s}}^{(\ell)}(x) \alpha_g(y_{\overline{s}}^{(\ell)})(x) \\
&\approx_{\epsilon} y_{\overline{s}}^{(\ell)}(x)y_{\overline{gs}}^{(\ell)}(x) \\
&\approx_{\epsilon} 0.
\end{eqsplit}
Hence $y_{\overline{s}}^{(\ell)}(x) \approx_{\sqrt{2\epsilon}} 0$ for each $\overline{s} \in G/H$ and $0\leq \ell \leq d$. Therefore,
\[
1 \approx_{\epsilon} \sum_{\ell=0}^d \sum_{\overline{s} \in G/H} y_{\overline{s}}^{(\ell)}(x) \approx_{k(d+1)\sqrt{2\epsilon}} 0.
\]
This must hold for each $\epsilon > 0$ which is absurd. Therefore, $g = e$ must hold and we conclude that the action is free.
\end{proof}

To understand the extent to which the converse of \autoref{prop_frd_implies_free} holds for arbitrary residually finite groups (as against just nilpotent ones), we first study actions on $C_0(X)$-algebras. Our goal is to prove an estimate for the Rokhlin dimension of certain actions on a $C_0(X)$-algebra analogous to that of \cite[Theorem 2.3]{self_rokhlin}.

\begin{definition}
Let $X$ be a locally compact Hausdorff space. A C*-algebra $A$ is said to be a $C_0(X)$-algebra if there is a non-degenerate $\ast$-homomorphism $\Theta : C_0(X) \to Z(M(A))$, where $Z(M(A))$ denotes the center of the multiplier algebra of $A$.
\end{definition}

For a function $f\in C_0(X)$ and $a\in A$, we will write $fa := \Theta(f)(a)$. If $Y\subset X$ is a closed subspace, let $C_0(X,Y)$ denote the ideal of functions that vanish on $Y$. Then $C_0(X,Y)A$ is a closed ideal in $A$. We write $A(Y) := A/C_0(X,Y)A$ for the corresponding quotient and $\pi_Y : A\to A(Y)$ for the quotient map. If $Y = \{x\}$ is a singleton set, then the algebra $A(x) := A(\{x\})$ is called the fiber of $A$ at $x$, and we write $\pi_x : A\to A(x)$ for the corresponding quotient map. If $a\in A$, we simply write $a(x) := \pi_x(a) \in A(x)$. For each $a\in A$, we have a map $\Gamma_a : X\to \R$ given by $x \mapsto \|a(x)\|$. This map is upper semicontinuous (by \cite[Proposition 1.2]{rieffel}), a fact we will use crucially in the next proof. \\

Given a $C_0(X)$-algebra $A$, an automorphism $\beta \in \Aut(A)$ is said to be $C_0(X)$-linear if $\beta(fa) = f\beta(a)$ for all $f\in C_0(X)$ and all $a\in A$. We write $\Aut_X(A)$ for the subgroup of $\Aut(A)$ consisting of all $C_0(X)$-linear automorphisms. Given an automorphism $\beta \in \Aut_X(A)$ and a closed subset $Y \subset X$, there is a natural automorphism $\beta_Y \in \Aut(A(Y))$ such that $\beta_Y \circ \pi_Y = \pi_Y \circ \beta$. In particular, if $\alpha : G\to \Aut_X(A)$ is an action of a discrete group $G$ on $A$ by $C_0(X)$-linear automorphisms, then for each closed subspace $Y\subset X$, there is an action $\alpha_Y : G\to \Aut(A(Y))$ such that the quotient map $\pi_Y : A\to A(Y)$ is $G$-equivariant. Once again, if $Y = \{x\}$ is a singleton set, we write $\alpha_x$ for the action on $A(x)$. \\

We begin with a variant of \cite[Theorem 4.26]{gardella_lupini} that we need below.

\begin{lemma}\label{lem_almost_equivariant_section}
Let $A$ be a separable, nuclear C*-algebra and $G$ be an amenable group. Let $\alpha : G\to \Aut(A)$ and $\beta : G\to \Aut(B)$ be two actions, and $\pi : A\to B$ be a surjective $G$-equivariant $\ast$-homomorphism. Then for each $F\ssubset B, M\ssubset G$ and each $\epsilon > 0$, there is a c.c.p. section $\theta:B\to A$ such that
\[
\|\alpha_t(\theta(b)) - \theta(\beta_t(b))\| < \epsilon
\]
for all $b\in F$ and $t \in M$.
\end{lemma}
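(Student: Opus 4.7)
The strategy is to produce the section by first invoking the Choi--Effros lifting theorem and then averaging over a sufficiently large F\o lner set. Since $B$ is a quotient of the separable, nuclear C*-algebra $A$, it is itself separable and nuclear, so the Choi--Effros theorem yields a c.c.p.\ section $\theta_0 : B \to A$ with $\pi \circ \theta_0 = \mathrm{id}_B$. This $\theta_0$ is not equivariant, but amenability of $G$ will allow us to repair this approximately on the finite data $(F, M, \epsilon)$ in a manner directly analogous to the averaging argument used in \autoref{lem_positive_G_invariant_hereditary}.

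Given $F \ssubset B$, $M \ssubset G$ and $\epsilon > 0$, I would pick a F\o lner set $K \ssubset G$ satisfying
\[
\frac{|tK \Delta K|}{|K|} < \frac{\epsilon}{1 + \max_{b \in F}\|b\|} \quad \text{for every } t \in M,
\]
and define $\theta : B \to A$ by
\[
\theta(b) := \frac{1}{|K|} \sum_{s \in K} \alpha_s\bigl(\theta_0(\beta_{s^{-1}}(b))\bigr).
\]
This map is c.c.p.\ as a convex combination of compositions of c.c.p.\ maps with $\ast$-automorphisms, and it remains a section because $\pi$ is $G$-equivariant: each summand satisfies $\pi(\alpha_s(\theta_0(\beta_{s^{-1}}(b)))) = \beta_s(\beta_{s^{-1}}(b)) = b$, and so $\pi \circ \theta = \mathrm{id}_B$.

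For the approximate equivariance, the standard reindexing $u = ts$ gives
\[
\alpha_t(\theta(b)) = \frac{1}{|K|}\sum_{u \in tK} \alpha_u\bigl(\theta_0(\beta_{u^{-1}t}(b))\bigr) \quad \text{and} \quad \theta(\beta_t(b)) = \frac{1}{|K|}\sum_{u \in K} \alpha_u\bigl(\theta_0(\beta_{u^{-1}t}(b))\bigr).
\]
Since the two sums share the same summand and differ only over the symmetric difference $tK \Delta K$, the triangle inequality combined with the contractivity estimate $\|\alpha_u(\theta_0(\beta_{u^{-1}t}(b)))\| \leq \|b\|$ yields
\[
\|\alpha_t(\theta(b)) - \theta(\beta_t(b))\| \leq \frac{|tK \Delta K|}{|K|}\,\|b\| < \epsilon,
\]
by our choice of $K$. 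The only non-routine ingredients are the Choi--Effros theorem, which requires separability and nuclearity of $B$ (both inherited from $A$), and the existence of F\o lner sets in the amenable group $G$; I do not anticipate any serious obstacle, as the construction is a straightforward variant of a well-known averaging trick.
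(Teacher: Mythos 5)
Your proposal is correct and follows essentially the same route as the paper's proof: a Choi--Effros c.c.p.\ lift of $\mathrm{id}_B$ (valid since $B$, as a quotient of the separable nuclear $A$, is itself separable and nuclear), followed by averaging over a F\o lner set and the standard reindexing estimate $\|\alpha_t(\theta(b)) - \theta(\beta_t(b))\| \leq \frac{|tK\triangle K|}{|K|}\|b\|$. The paper phrases the averaging over a F\o lner sequence and takes $n$ large rather than fixing a single F\o lner set up front, but this is only a cosmetic difference.
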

\begin{proof}
That there is a c.c.p. section $\widetilde{\theta} : B\to A$ follows from the Choi-Effros theorem. Let $(F_n)$ be a F\o lner sequence in $G$, and define
\[
\theta_n(b) := \frac{1}{|F_n|}\sum_{s\in F_n} \alpha_s(\widetilde{\theta}(\beta_{s^{-1}}(b)).
\]
Then each $\theta_n$ is c.c.p. (because it is a convex combination of c.c.p. maps). Since $\widetilde{\theta}$ is a section and $\pi\circ \alpha_s = \beta_s\circ \pi$, it follows that $\theta_n$ is a section. Now observe that if $t\in G$ and $b\in F$, then
\begin{equation*}
\begin{split}
\alpha_t(\theta_n(b)) - \theta_n(\beta_t(b)) &= \frac{1}{|F_n|} \left( \sum_{s\in F_n} \alpha_{ts}(\widetilde{\theta}(\beta_{s^{-1}}(b))) - \sum_{s\in F_n} \alpha_s(\widetilde{\theta}(\beta_{s^{-1}t}(b)))\right) \\
&= \frac{1}{|F_n|}\left(\sum_{w\in tF_n} \alpha_{w}(\widetilde{\theta}(\beta_{w^{-1}t}(b))) - \sum_{w\in F_n} \alpha_{w}(\widetilde{\theta}(\beta_{w^{-1}t}(b)))\right)
\end{split}
\end{equation*}
Hence,
\[
\|\alpha_t(\theta_n(b)) - \theta_n(\beta_t(b))\| \leq \frac{|tF_n\triangle F_n|\|b\|}{|F_n|}.
\]
Therefore, $\theta = \theta_n$ does the job for $n$ large enough.
\end{proof}

\begin{theorem}\label{thm_cx_algebra}
Let $X$ be a locally compact Hausdorff space and $A$ be a separable, nuclear $C_0(X)$-algebra. Let $G$ be an amenable, finitely generated, residually finite group and $\alpha : G\to \Aut_X(A)$ be an action of $G$ on $A$ by $C_0(X)$-linear automorphisms. Then,
\[
\dr(\alpha) \leq (\dim(X) + 1)(\sup_{x\in X} \dr(\alpha_x) + 1) - 1.
\]
\end{theorem}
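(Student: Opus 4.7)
The plan is to verify the Rokhlin system formulation of \autoref{prop_rokhlin_system} by producing, for each $F \ssubset A$, $M\ssubset G$, $S\ssubset C(\G)$ and $\epsilon>0$, a family of $(\dim(X)+1)(n+1)$ c.c.p.\ maps $C(\G)\to A$ with the required properties, where $n := \sup_{x\in X}\dr(\alpha_x)$ and $m := \dim(X)$. The argument follows the pattern of \cite[Theorem 2.3]{self_rokhlin}: combine a local Rokhlin system at each fiber with a partition of unity subordinate to a coloured cover coming from the covering dimension of $X$. A decisive simplification is that $\alpha$ is $C_0(X)$-linear, so every function in $C_0(X)\subset Z(M(A))$ is $\alpha$-fixed; consequently a partition of unity pulled from $C_0(X)$ is automatically $G$-invariant and does not have to be averaged.

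First I would trim the problem to a compact subset $K\subset X$ on which the elements of $F$ are concentrated (using upper semi-continuity of $x\mapsto\|a(x)\|$ to make $\|a(x)\|<\epsilon$ off $K$). For each $x\in K$, the hypothesis $\dr(\alpha_x)\leq n$ yields, via \autoref{prop_rokhlin_system} applied to $(A(x),\alpha_x)$, a $(n,\pi_x(F),M,S,\delta)$-Rokhlin system $\varphi_0^x,\ldots,\varphi_n^x:C(\G)\to A(x)$ for a suitably small $\delta$. Nuclearity of $A$ lets one lift these by the Choi--Effros theorem to c.c.p.\ maps $\tilde\varphi_i^x:C(\G)\to A$, and \autoref{lem_almost_equivariant_section} lets us arrange the lifts to be almost $G$-equivariant on $S$ and $M$. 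The Rokhlin conditions now hold at the fibre $x$ with small error; by upper semi-continuity applied to the finitely many elements appearing, they continue to hold with error $2\delta$ on some open neighbourhood $U_x\ni x$.

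Next I would compress $\{U_x\}_{x\in K}$ to a finite subcover, refine it using $\dim(X)=m$ to an open cover $\{V_\alpha^{(j)}\}$ indexed by $j\in\{0,\ldots,m\}$ such that, for each fixed colour $j$, the sets $\{V_\alpha^{(j)}\}_\alpha$ are pairwise disjoint and each $V_\alpha^{(j)}$ lies in some $U_{x(\alpha,j)}$. Choose a subordinate partition of unity $\{\lambda_\alpha^{(j)}\}\subset C_0(X)$ and, for $0\leq i\leq n$, $0\leq j\leq m$, define
\[
\psi_{i,j}(f):=\sum_\alpha (\lambda_\alpha^{(j)})^{1/2}\,\tilde\varphi_i^{x(\alpha,j)}(f)\,(\lambda_\alpha^{(j)})^{1/2}.
\]
For each fixed $j$ the summands have pairwise orthogonal $C_0(X)$-supports, so $\psi_{i,j}$ is c.c.p.\ (centrality of the $\lambda_\alpha^{(j)}$ in $M(A)$ makes the cross-terms vanish). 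Away from the supports we need nothing, and the $\lambda_\alpha^{(j)}$ are $\alpha$-fixed, so $\alpha_g$ passes through them and interacts only with the $\tilde\varphi_i^{x(\alpha,j)}$.

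The final step is the bookkeeping that shows $\{\psi_{i,j}\}_{i,j}$ is a $(\dim(X)+1)(n+1)-1,F,M,S,\epsilon)$-Rokhlin system. Conditions (1)--(3) of \autoref{prop_rokhlin_system} follow term by term, using that each $\tilde\varphi_i^{x(\alpha,j)}$ satisfies the local Rokhlin relations with small error on $V_\alpha^{(j)}\subset U_{x(\alpha,j)}$, centrality and $G$-fixedness of $\lambda_\alpha^{(j)}$, and the orthogonality of the supports within each colour. For (4), note that $\sum_{i,j}\psi_{i,j}(1_{C(\G)})=\sum_{\alpha,j}\lambda_\alpha^{(j)}\cdot\tilde\varphi_i^{x(\alpha,j)}(1_{C(\G)})$, and the inner sum $\sum_i\tilde\varphi_i^{x(\alpha,j)}(1_{C(\G)})$ acts as $1$ up to $O(\delta)$ on elements of $F$ inside $V_\alpha^{(j)}$; combined with $\sum_{\alpha,j}\lambda_\alpha^{(j)}\equiv1$ on $K$ and the cut-off outside $K$, this yields condition (4). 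The main obstacle is the $\delta$-bookkeeping: one must pick $\delta$ small enough (depending on $m$, $n$, $|F|$, and the finite subcover) so that all the accumulated errors remain below $\epsilon$, and one must be careful that the single lift $\tilde\varphi_i^x$ is produced once but used with respect to several orthogonal cut-offs, which is where the $C_0(X)$-linearity of $\alpha$ and the centrality of $C_0(X)$ in $M(A)$ do the heavy lifting.
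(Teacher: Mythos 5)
Your proposal is correct and follows essentially the same route as the paper: reduce to a compact subset of $X$, lift fibrewise Rokhlin systems by Choi--Effros, make the lifts almost equivariant via \autoref{lem_almost_equivariant_section}, propagate to neighbourhoods by upper semicontinuity of $x\mapsto\|c(x)\|$, and combine over an $(\dim X+1)$-coloured refinement using a (automatically $G$-invariant, by $C_0(X)$-linearity) partition of unity. The only difference is cosmetic: you sum central compressions $(\lambda_\alpha^{(j)})^{1/2}\,\cdot\,(\lambda_\alpha^{(j)})^{1/2}$ over each colour class, whereas the paper first identifies $A(\overline{V_i})$ with the direct sum $\bigoplus_j A(\overline{V_{i,j}})$ to get one map per colour and then multiplies by the colour's partition-of-unity function --- these agree since the $\lambda$'s are central.
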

\begin{proof}
Assume without loss of generality that $n := \dim(X) < \infty$ and that $d := \sup_{x\in X} \dr(\alpha_x) < \infty$, and let  $m := (n+1)(d+1)-1$. Fix $F \ssubset A, S\ssubset C(\G), M\ssubset G$ and $\epsilon > 0$ and assume that $\|a\|\leq 1$ for all $a\in F$ and that $\|f\| \leq 1$ for all $f\in S$. Set $\eta := \frac{\epsilon}{2(m+2)}$ and choose a compact set $K\subset X$ such that
\[
\sup_{x\in X\setminus K}\|a(x)\| < \eta
\]
for all $a\in F$. Now observe that $A(K)$ is a $C(K)$-algebra, which carries an action $\alpha_K : G\to \Aut_K(A(K))$ of $G$ acting by $C(K)$-linear automorphisms. Moreover, $\dim(K) \leq \dim(X)$ and if $x\in K$, then $A(K)(x) = A(x)$. Now assume for a moment that there exist $(m+1)$ c.c.p. maps
\[
\psi_0, \psi_1, \ldots, \psi_m : C(\G)\to A(K)
\]
which form a $(m, \pi_K(F), M, S, \eta)$-Rokhlin system for the action $\alpha_K$. Then, we may choose any c.c.p. section $\theta : A(K) \to A$ (which exists by the Choi-Effros theorem), and observe that the maps $\{\theta\circ \psi_{\ell} : 0\leq \ell\leq m\}$ form a $(m, F, M, S, \epsilon)$-Rokhlin system for the action $\alpha$. This calculation is elementary and relies on the fact that for any $b\in A$ and $a\in F$,
\[
\|ba\| \leq \max\{\eta\|b\|, \|\pi_K(ba)\|\} \text{ and } \|[b,a]\| \leq \max\{2\eta\|b\|, \|[\pi_K(b), \pi_K(a)]\|\}.
\]
In turn, both of these follow from the fact that for any $c\in A, \|c\| = \sup_{x\in X}\|c(x)\|$ by \cite[Proposition 2.8]{blanchard}. Replacing $X$ by $K$, it therefore suffices to prove the theorem when $X$ is itself compact. \\

We now assume $X$ is compact. The remainder of the proof follows that of \cite[Theorem 2.3]{self_rokhlin}.  For $x\in X$ fixed, since $d \geq \dr(\alpha_x)$, there are $(d+1)$ c.c.p. maps
\[
\psi_0, \psi_1, \ldots, \psi_d : C(\G)\to A(x)
\]
which form a $(d, \pi_x(F), M, S, \eta)$-Rokhlin system. By the Choi-Effros theorem (which is applicable since $G$ is finitely generated), there are c.c.p. maps $\widetilde{\psi}_0, \widetilde{\psi}_1, \ldots, \widetilde{\psi}_d : C(\G) \to A$ such that $\pi_x \circ \widetilde{\psi}_j = \psi_j$ for all $0\leq j\leq d$. Since the maps $\{\Gamma_c : c\in A\}$ are upper semicontinuous, for each $x\in X$, there is an open set $V_x$ containing $x$ and $(d+1)$ c.c.p. maps $\psi^{(0)}, \psi^{(1)}, \ldots, \psi^{(d)} : C(\overline{G})\to A(\overline{V_x})$ satisfying the following conditions:
\begin{enumerate}
\item $\psi^{(k)}(f_1)\psi^{(k)}(f_2)a\approx_{\eta} 0$ for all $f_1, f_2\in S$ with $f_1\perp f_2$, all $a\in F$ and all $0\leq k\leq d$.
\item $\sum_{k=0}^d \psi^{(k)}(1_{C(\G)})\pi_{\overline{V_x}}(a) \approx_{\eta} \pi_{\overline{V_x}}(a)$ for all $a\in F$.
\item $[\psi^{(k)}(f), \pi_{\overline{V_x}}(a)]\approx_{\eta} 0$ for all $a\in F, f\in S$ and $0\leq k\leq d$.
\item $(\alpha_s)_{\overline{V_x}}(\psi^{(k)}(f)) \approx_{\eta} \psi^{(k)}(\sigma_s(f))$ for all $f\in S, s\in M$ and $0\leq k\leq d$.
\end{enumerate}
By \cite[Lemma 2.2]{self_rokhlin}, we may choose a strongly $n$-decomposable refinement of this cover $\mathcal{U} = \mathcal{U}_0\sqcup \mathcal{U}_1 \sqcup \ldots \sqcup \mathcal{U}_n$. In other words, if $\mathcal{U}_i = \{V_{i,1}, V_{i,2}, \ldots, V_{i,k_i}\}$, then $\overline{V_{i,j_1}} \cap \overline{V_{i,j_2}} = \emptyset$ whenever $j_1\neq j_2$. Now for each $0\leq i\leq n$ and $1\leq j\leq k_i$, there are $(d+1)$ c.c.p. maps
\[
\psi^{(0)}_{i,j}, \psi^{(1)}_{i,j}, \ldots, \psi^{(d)}_{i,j} : C(\G)\to A(\overline{V_{i,j}}).
\]
satisfying the four conditions listed above on $\overline{V_{i,j}}$. Write $V_i = \sqcup_{j=1}^{k_i} V_{i,j}$ so that $\overline{V_i} = \sqcup_{j=1}^{k_i} \overline{V_{i,j}}$. By \cite[Lemma 2.4]{mdd_finite},
\[
A(\overline{V_i}) \cong \bigoplus_{j=1}^{k_i} A(\overline{V_{i,j}}).
\]
Therefore, we get $(d+1)$ c.c.p. maps
\[
\psi_i^{(0)}, \psi_i^{(1)}, \ldots, \psi_i^{(d)} : C(\G)\to A(\overline{V_i})
\]
satisfying the four conditions listed above on $\overline{V_i}$. Choose a partition of unity $\{f_i : 0\leq i\leq n\}$ subordinate to the open cover $\{V_0, V_1, \ldots, V_n\}$ and unital, c.c.p. sections $\theta_i : A(\overline{V_i}) \to A$ using \autoref{lem_almost_equivariant_section} such that
\[
\alpha_t(\theta_i(\psi_i^{(k)}(f)) \approx_{\eta} \theta_i((\alpha_t)_{\overline{V_i}}(\psi_i^{(k)}(f)))
\]
for all $f\in S$ and $t\in M$. For $0\leq i\leq n, 0\leq k\leq d$, define $\varphi_{i,k} : C(\overline{G})\to A$ by
\[
\varphi_{i,k}(f) := f_i\theta_i(\psi_i^{(k)}(f)).
\]
Then each $\varphi_{i,k}$ is an c.c.p. map, and an argument entirely similar to \cite[Theorem 2.3]{self_rokhlin} (with $\G$ playing the role of $G$) shows that the collection $\{\varphi_{i,k} : 0\leq i\leq n, 0\leq k\leq d\}$ forms a $(m, F,M, S, \epsilon)$-Rokhlin system for $\alpha$. Thus, $\dr(\alpha) \leq (n+1)(d+1) - 1$.
\end{proof}

The next result now wraps up the discussion (to an extent) of the relationship between free actions on a locally compact space $X$ and finiteness of Rokhlin dimension of the associated action on $C_0(X)$.

\begin{cor}\label{thm_free_proper}
Let $G$ be an amenable, finitely generated, residually finite group and let $X$ be a locally compact, separable metric space of finite covering dimension. Let $\tilde{\alpha}:G\curvearrowright X$ be an action of $G$ on $X$ and let $\alpha : G\to \Aut(C_0(X))$ be the induced action on $C_0(X)$. If $\tilde{\alpha}$ is free and proper, then
\[
\dr(\alpha) \leq \dim(X).
\]
\end{cor}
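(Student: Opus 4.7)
The plan is to realize $C_0(X)$ as a $C_0(Y)$-algebra over the orbit space $Y := X/G$ and then apply \autoref{thm_cx_algebra}.

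Since $\tilde{\alpha}$ is free and proper and $G$ is countable discrete, the orbit space $Y$ is a locally compact, separable Hausdorff metric space and the quotient map $\pi : X \to Y$ is a covering map. The pullback $\pi^{\ast} : C_0(Y) \to C_b(X) = Z(M(C_0(X)))$ is a non-degenerate $\ast$-homomorphism (use compactly supported approximations of elements of $C_0(X)$), so $C_0(X)$ is a $C_0(Y)$-algebra; since $\tilde{\alpha}$ preserves the fibers of $\pi$, $\alpha$ acts by $C_0(Y)$-linear automorphisms. For each $y \in Y$, the fiber $C_0(X)(y) \cong C_0(\pi^{-1}(y))$, and since $\tilde{\alpha}$ is free we may identify $\pi^{-1}(y)$ with $G$ by sending $g x_0 \mapsto g$ for some $x_0 \in \pi^{-1}(y)$. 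Under this identification, the induced action $\alpha_y$ becomes the left-translation action $\tau : G \to \Aut(c_0(G))$.

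Next I claim that $\dr(\tau) = 0$. By \autoref{lem_rokhlin_system_finite_index}, it suffices, for each $H \lf G$ with coset representatives $g_1, \ldots, g_k$ and each finite data $F_0 \ssubset c_0(G)$, $M \ssubset G$, $S \ssubset C(G/H)$ and $\epsilon > 0$, to produce a single c.c.p. map satisfying the four Rokhlin conditions. After approximating each $a \in F_0$ by a finitely supported function, we may assume there is a finite set $E_0 \subset G$ containing the support of every $a \in F_0$. Set $E := \bigcup_{g \in M \cup \{e\}} g^{-1} E_0$, a finite subset of $G$, and define $\psi : C(G/H) \to c_0(G)$ by $\psi(\chi_{g_i H}) := \chi_{E \cap g_i H}$, extended linearly. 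Since $C(G/H)$ is commutative and the sets $E \cap g_i H$ are pairwise disjoint, $\psi$ is a unital-range $\ast$-homomorphism, hence c.c.p. Commutativity gives condition (1) and disjointness of cosets gives condition (3); condition (4) holds because $\sum_i \chi_{E \cap g_i H} = \chi_E$ fixes every $a$ supported in $E_0 \subset E$. For condition (2), the inclusion $g E \supset E_0$ (which holds by construction for $g \in M$) ensures that both $\alpha_g(\psi(\chi_{g_i H})) \cdot a$ and $\psi(\sigma^H_g(\chi_{g_i H})) \cdot a$ equal $\chi_{g g_i H \cap E_0} \cdot a$ exactly.

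Finally, $C_0(X)$ is separable (as $X$ is second countable) and nuclear (as it is commutative), and $Y$ is a locally compact metric space with $\dim(Y) \leq \dim(X)$ since $\pi$ is a local homeomorphism and covering dimension is a local invariant. Applying \autoref{thm_cx_algebra} to the $C_0(Y)$-algebra $C_0(X)$ yields
\[
\dr(\alpha) \;\leq\; (\dim(Y) + 1)\bigl(\sup_{y \in Y} \dr(\alpha_y) + 1\bigr) - 1 \;=\; \dim(Y) \;\leq\; \dim(X).
\]
The main obstacles are (a) confirming that the fibers of $C_0(X)$ reduce cleanly to $c_0(G)$ with the translation action (this is where freeness of $\tilde{\alpha}$ is essential), and (b) the careful but elementary choice of the finite set $E$ so that condition (2) of the Rokhlin system holds exactly; the dimension bound $\dim(Y) \leq \dim(X)$ and the hypotheses on $C_0(X)$ needed to invoke \autoref{thm_cx_algebra} are straightforward.
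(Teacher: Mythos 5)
Your proposal is correct and follows the same overall route as the paper: pass to the orbit space $Y = X/G$, view $C_0(X)$ as a $C_0(Y)$-algebra with fibers equivariantly isomorphic to $(c_0(G), \mathrm{Lt})$, show that this fiber action has Rokhlin dimension zero, and invoke \autoref{thm_cx_algebra}. The one place where you genuinely diverge is the proof that $\dr(\mathrm{Lt}) = 0$. The paper observes that the quotient map $p : G \to G/H$ induces a unital equivariant $\ast$-homomorphism $p^{\ast} : C(G/H) \to C_b(G)$, so the translation action on $C_b(G)$ has Rokhlin dimension zero on the nose, and then descends to $c_0(G)$ via the hereditary-subalgebra permanence result (\autoref{thm_permanence}(3)), which is where amenability enters that argument. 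You instead build the Rokhlin system directly inside $c_0(G)$ by truncating the coset indicators to a finite set $E = \bigcup_{g \in M\cup\{e\}} g^{-1}E_0$; your verification is correct -- in particular the equivariance condition holds exactly on elements supported in $E_0$ because $E_0 \subset E \cap gE$ for all $g \in M$, so both sides reduce to $\chi_{gg_iH \cap E_0}\,a$. Your version is more self-contained and elementary (it avoids working in the non-separable algebra $C_b(G)$ and does not use amenability for this step), at the cost of an explicit finite-set computation; the paper's version is shorter given the permanence machinery already established. One further minor difference: you only establish $\dim(Y) \leq \dim(X)$ from local homeomorphy and the local nature of covering dimension for separable metric spaces, whereas the paper cites Alexandroff's theorem for equality -- the inequality is all that is needed, so this is fine.
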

\begin{proof}
Let $Y := X/G$, which is Hausdorff and locally compact and the natural map $\pi : X\to Y$ is a local homeomorphism. Moreover, $X$ is metrizable and separable, so by Alexandroff's theorem \cite[Theorem 1.12.8]{engelking} it follows that $\dim(Y) = \dim(X) < \infty$ (Note that small inductive dimension and Lebesgue covering dimension coincide for separable metric spaces). \\

If $A := C_0(X)$, the map $\pi^{\ast} : C_0(Y) \to Z(M(A)) \cong C_b(X)$ gives $A$ the structure of a $C_0(Y)$-algebra. $A$ is separable because $X$ is second countable and $A$ is clearly nuclear. Moreover, the action $\alpha$ is by $C_0(Y)$-linear automorphisms. By \cite[Example C.4]{williams}, the fiber $A(y)$ at a point $y = \pi(x)\in Y$ is isomorphic to $C_0(G\cdot x)$. The map $\theta : G\to G\cdot x$ given by $g \mapsto \alpha_g(x)$ induces an equivariant isomorphism
\[
\theta^{\ast} : (C_0(G\cdot x), \alpha_y) \to (C_0(G), \text{Lt})
\]
where $\text{Lt} : G\to \Aut(C_0(G))$ is the natural action of $G$ on $C_0(G)$ induced by the left-translation action of $G$ on itself. We claim that $\dr(\text{Lt}) = 0$. To see this, fix $H\nf G$. Then the quotient map $p : G\to G/H$ induces an equivariant, unital $\ast$-homomorphism
\[
p^{\ast} : (C(G/H), \sigma_G) \to (C_b(G), \text{Lt}).
\]
By \autoref{lem_rokhlin_system_finite_index}, the action $\text{Lt} : G\to \Aut(C_b(G))$ has Rokhlin dimension zero. Since $C_0(G)$ is a $G$-invariant hereditary subalgebra of $C_b(G)$, it follows from \autoref{thm_permanence} that $\text{Lt} : G\to \Aut(C_0(G))$ also has Rokhlin dimension zero. Hence $\dr(\alpha_y) = 0$ for each $y\in Y$. The result now follows from \autoref{thm_cx_algebra}.
\end{proof}

\section{Ideal Separation and Outerness}\label{sec_outerness}

In this, the final section of the paper, we discuss three related notions for an action of a residually finite group on a C*-algebra with finite Rokhlin dimension. We show that such actions are pointwise outer. Using a theorem of Sierakowski \cite{sierakowski}, we show that the ideals in the associated reduced crossed product C*-algebra must arise from invariant ideals in the underlying C*-algebra. Finally, we show how this property implies that such actions are also properly outer, provided the group satisfies the (VRC) property.

\begin{definition}
Let $A$ be a C*-algebra. An automorphism $\alpha \in \Aut(A)$ is said to be inner if there is a unitary $u \in \widetilde{A}$ such that $\alpha(a) = uau^{\ast}$ for all $a\in A$. In that case, we write $\alpha = \text{Ad}(u)$. Moreover, $\alpha$ is said to be outer if it is not inner. If $G$ is a locally compact group, an action $\alpha : G\to \Aut(A)$ is said to be pointwise outer if $\alpha_g$ is outer for each non-identity element $g\in G$.
\end{definition}

If $\alpha : G\to \Aut(A)$ is an action of a compact group $G$ on a C*-algebra $A$, then finiteness of Rokhlin dimension implies that $\alpha$ is pointwise outer (\cite[Proposition 4.15]{gardella_compact}). We show that the same is true for discrete, residually finite groups as well.

\begin{prop}
Let $A$ be a C*-algebra and $G$ be a residually finite group. If $\alpha: G\to \Aut(A)$ is an action such that $\dr(\alpha) < \infty$, then $\alpha$ is pointwise outer.
\end{prop}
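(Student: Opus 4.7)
The approach is to argue by contradiction. Suppose some $g \in G \setminus \{e\}$ satisfies $\alpha_g = \mathrm{Ad}(u)$ for a unitary $u \in \widetilde A$. By residual finiteness, pick a subgroup $H\lf G$ with $g \notin H$, so that $eH \neq gH$ in $G/H$ and hence $\chi_{eH}, \chi_{gH} \in C(G/H)$ are orthogonal. Let $d := \dr(\alpha, H) \leq \dr(\alpha) < \infty$. The plan is to invoke the $F(D,A)$-formulation of $\dr(\alpha, H) \leq d$ for a carefully chosen separable $\alpha$-invariant subalgebra $D \subset A$, and then extract a contradiction from equivariance together with the order zero condition.

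The crucial step is the choice of $D$. Write $u = \lambda \cdot 1 + a$ with $\lambda \in \C$ and $a \in A$, and take $D \subset A$ to be the C*-subalgebra generated by the countable orbit $\{\alpha_h(a) : h \in G\}$. Then $D$ is separable and $\alpha$-invariant, and $a \in D$, so that $u \in \widetilde D$. (In the degenerate case $a = 0$, i.e., $\alpha_g = \mathrm{id}$, simply take $D$ to be any nontrivial separable $\alpha$-invariant subalgebra.) Applying the definition of $\dr(\alpha, H) \leq d$ to this $D$, I obtain equivariant c.c.p.\ order zero maps $\varphi_0, \ldots, \varphi_d : (C(G/H), \sigma^H) \to (F(D, A), \widetilde\alpha_\infty)$ with $\sum_\ell \varphi_\ell(1_{C(G/H)}) = 1_{F(D, A)}$. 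The point of arranging $u \in \widetilde D$ is that every $x \in A_\infty \cap D'$ commutes with $a = u - \lambda \in D$, hence with $u$ itself; therefore $\alpha_{g, \infty}(x) = u x u^* = x$, which means that $\widetilde\alpha_{g, \infty}$ descends to the identity automorphism of $F(D, A)$.

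The contradiction then comes quickly. By equivariance of $\varphi_\ell$ combined with the triviality of $\widetilde\alpha_{g, \infty}$,
\[
\varphi_\ell(\chi_{gH}) = \varphi_\ell(\sigma_g^H(\chi_{eH})) = \widetilde\alpha_{g, \infty}(\varphi_\ell(\chi_{eH})) = \varphi_\ell(\chi_{eH}).
\]
Since $\chi_{eH}$ and $\chi_{gH}$ are orthogonal in $C(G/H)$, the order zero property gives $\varphi_\ell(\chi_{eH})^2 = \varphi_\ell(\chi_{eH})\varphi_\ell(\chi_{gH}) = 0$, and positivity forces $\varphi_\ell(\chi_{eH}) = 0$. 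Applying equivariance once more with an arbitrary $k \in G$, we get $\varphi_\ell(\chi_{kH}) = \widetilde\alpha_{k, \infty}(\varphi_\ell(\chi_{eH})) = 0$, so $\varphi_\ell(1_{C(G/H)}) = \sum_{kH}\varphi_\ell(\chi_{kH}) = 0$. Summing over $\ell$ produces $1_{F(D, A)} = 0$, which is absurd since $D$ is a nonzero $\sigma$-unital algebra and hence $F(D, A)$ is a nonzero unital C*-algebra. I expect the only real subtlety to be the construction of $D$ with $u \in \widetilde D$; without this absorption, $\widetilde\alpha_{g, \infty}$ need not be trivial on $F(D, A)$ and the scheme collapses.
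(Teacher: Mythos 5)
Your proof is correct. The key step --- choosing $D$ to be the separable $\alpha$-invariant subalgebra generated by the $G$-orbit of $a = u - \lambda 1$, so that every element of $A_\infty \cap D'$ commutes with $u$ and hence $\widetilde{\alpha}_{g,\infty}$ acts as the identity on $F(D,A)$ --- is sound, and the remaining algebra (equivariance forces $\varphi_\ell(\chi_{gH}) = \varphi_\ell(\chi_{eH})$, orthogonality plus order zero forces $\varphi_\ell(\chi_{eH})^2 = 0$, hence $\varphi_\ell(\chi_{eH}) = 0$, hence $1_{F(D,A)} = 0$) is airtight, given that $F(D,A)$ has a nonzero unit whenever $D \neq 0$ is $\sigma$-unital. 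The paper proves the same result by a different route: it works with the approximate formulation of \autoref{lem_rokhlin_system_finite_index} directly in $A$, placing $v = u - \lambda 1$ and $v^*$ into the finite test set $F$ so that the Rokhlin elements $y^{(\ell)}_{\overline{s}}$ approximately commute with $u$, and then chases $\epsilon$'s to show that a fixed nonzero $x \in A$ satisfies $\|x\| \lesssim n(d+1)\sqrt{6\epsilon}$ for every $\epsilon$, a contradiction. The underlying mechanism (innerness trivializes the action on anything commuting with $u$, which is incompatible with equivariance and orthogonality of the towers) is the same in both arguments, but your version trades the paper's quantitative estimates for an exact computation in the relative central sequence algebra; the price is the extra care in constructing $D$ (and the separate degenerate case $a = 0$), while the payoff is a proof with no norm estimates at all. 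Both are valid; yours is arguably cleaner, though it leans on the exact $F(D,A)$-formulation of the definition where the paper's is self-contained at the level of finite sets and tolerances.
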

\begin{proof}
Let $d := \dr(\alpha) < \infty$ and suppose $g\in G$ is a non-identity element such that $\alpha_g = \text{Ad}(u)$ for some unitary $u\in \widetilde{A}$. Write $u = v + \lambda 1_{\widetilde{A}}$ for some $v\in A$ and $\lambda \in \C$. Choose a subgroup $H\lf G$ such that $g\notin H$ and let $n := [G:H]$. Fix a non-zero element $x\in A$ with $\|x\| \leq 1$, and set $F := \{x, x^{\ast}, x^{1/2}, v, v^{\ast}\} \ssubset A, S := \{\delta_{\overline{s}} : \overline{s} \in G/H\} \ssubset C(G/H), M = \{g\}$ and choose $\epsilon > 0$. By \autoref{lem_rokhlin_system_finite_index}, there exist $(d+1)$ c.c.p. maps
\[
\varphi_0, \varphi_1, \ldots, \varphi_d : C(G/H) \to A
\]
which form an $(H, d, F, M, S, \epsilon)$-Rokhlin system. For $0\leq \ell\leq d$ and $\overline{s} \in G/H$, let $y_{\overline{s}}^{(\ell)} := \varphi_{\ell}(\delta_{\overline{s}})$. Then,
\[
x^{1/2}uy_{\overline{s}}^{(\ell)}u^{\ast}x^{1/2} = x^{1/2} \alpha_g(y_{\overline{s}}^{(\ell)})x^{1/2} \approx_{\epsilon} x^{1/2} y_{\overline{gs}}^{(\ell)}x^{1/2} \approx_{\epsilon} y_{\overline{gs}}^{(\ell)} x.
\]
However,
\begin{eqsplit}
x^{1/2}uy_{\overline{s}}^{(\ell)}u^{\ast}x^{1/2} &= x^{1/2}(vy_{\overline{s}}^{(\ell)}v^{\ast} + \overline{\lambda}vy_{\overline{s}}^{(\ell)} + \lambda y_{\overline{s}}^{(\ell)} v^{\ast} + |\lambda|^2 y_{\overline{s}}^{(\ell)})x^{1/2} \\
&\approx_{2\epsilon} x^{1/2}(y_{\overline{s}}^{(\ell)}vv^{\ast} + \overline{\lambda} y_{\overline{s}}^{(\ell)} v + \lambda y_{\overline{s}}^{(\ell)} v^{\ast} + |\lambda|^2 y_{\overline{s}}^{(\ell)}) x^{1/2} \\
&= x^{1/2} y_{\overline{s}}^{(\ell)}uu^{\ast} x^{1/2} \\
&\approx_{\epsilon} y_{\overline{s}}^{(\ell)} x
\end{eqsplit}
Since c.c.p. maps preserve adjoints, the $y_{\overline{s}}^{(\ell)}$ are self-adjoint, and
\[
(y_{\overline{s}}^{(\ell)}x)^{\ast} (y_{\overline{s}}^{(\ell)}x) \approx_{5\epsilon} (y_{\overline{gs}}^{(\ell)} x)^{\ast} (y_{\overline{s}}^{(\ell)}x) = x^{\ast} y_{\overline{gs}}^{(\ell)}y_{\overline{s}}^{(\ell)}x \approx_{\epsilon} 0.
\]
Hence, $y_{\overline{s}}^{(\ell)}x \approx_{\sqrt{6\epsilon}} 0$ for each $\overline{s} \in G/H$. This implies that
\[
x \approx_{\epsilon} \sum_{\ell=0}^d \sum_{\overline{s} \in G/H} y_{\overline{s}}^{(\ell)}x \approx_{n(d+1)\sqrt{6\epsilon}} 0.
\]
This is true for any $\epsilon > 0$, so $x=0$. This contradicts our assumption on $x$, so we conclude that $\alpha_g$ cannot be inner.
\end{proof}

We now turn our attention to the ideal structure of the reduced crossed product C*-algebra. Let $A$ be a C*-algebra and $\alpha : G\to \Aut(A)$ be an action of a discrete group $G$ on $A$. We write $A\rtimes_r G$ for the associated reduced crossed product C*-algebra. If $I$ is a $G$-invariant ideal of $A$, then $I\rtimes_r G$ forms an ideal in $A\rtimes_r G$. An interesting question, therefore, is to determine conditions under which every ideal of $A\rtimes_r G$ arises in this way.

\begin{definition}
For an action $\alpha : G\to \Aut(A)$, we say that $A$ separates ideals in $A\rtimes_r G$ if the only ideals in $A\rtimes_{r} G$ are of the form $I\rtimes_{r} G$ for some $G$-invariant ideal $I \< A$.
\end{definition}

The following result due to Sierakowski is relevant to us, as it gives an easily verifiable condition to determine if an action has the ideal separation property. Recall that if $\alpha : G\to \Aut(A)$ is an action of a discrete group $G$ on a C*-algebra $A$, then there is a conditional expectation $E : A\rtimes_r G\to A$ such that $E(\sum_{s\in G} a_s\lambda_s) = a_e$ on $C_c(G,A)$ (see, for instance, \cite[Proposition 4.1.9]{brown_ozawa}). Given a C*-algebra $B$ and an element $x\in B$, we write $I_B[x]$ for the ideal in $B$ generated by $x$.

\begin{theorem}\cite[Theorem 1.13]{sierakowski}\label{thm_sierakowski} Let $G$ be a discrete group and $\alpha : G\to \Aut(A)$ be an exact action of $G$ on a C*-algebra $A$. If $E(x) \in I_{A\rtimes_r G}[x]$ for every positive element $x\in A\rtimes_r G$, then $A$ separates ideals in $A\rtimes_r G$.
\end{theorem}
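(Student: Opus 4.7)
My approach to Sierakowski's theorem is to use the hypothesis directly, coupled with two structural facts built into the statement: exactness of $\alpha$ and faithfulness of the canonical conditional expectation on a reduced crossed product.

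First, I would fix an arbitrary ideal $J \triangleleft A\rtimes_r G$ and set $I := J \cap A$. The ideal $I$ is automatically $G$-invariant: for $a \in I$, the element $\alpha_g(a) = \lambda_g a \lambda_g^{\ast}$ lies in $J$ because $J$ is two-sided and $\lambda_g$ is a multiplier of $A\rtimes_r G$, and it lies in $A$ because $A$ is $\alpha$-invariant, so $\alpha_g(a) \in J \cap A = I$. The containment $I\rtimes_r G \subseteq J$ is immediate, since $J$ contains a copy of $I$ together with every $\lambda_g$, hence a dense subalgebra of $I\rtimes_r G$.

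The content of the theorem lies in the reverse inclusion, and the key observation is that $E(J_+) \subseteq I$, where $J_+$ denotes the positive part of $J$. Indeed, if $x \in J$ is positive, the hypothesis gives $E(x) \in I_{A\rtimes_r G}[x]\subseteq J$, and since $E(x) \in A$, we conclude $E(x) \in J \cap A = I$. Now invoke exactness of $\alpha$ to obtain the short exact sequence
\[
0 \to I\rtimes_r G \xrightarrow{\iota} A\rtimes_r G \xrightarrow{q} (A/I)\rtimes_r G \to 0,
\]
and note that the canonical conditional expectations intertwine the quotient map in the sense that $\overline{E}\circ q = \overline{q}\circ E$, where $\overline{q}: A\to A/I$ is the quotient map and $\overline{E}$ is the expectation on $(A/I)\rtimes_r G$. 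Combining these, for any $x \in J_+$ we get
\[
\overline{E}(q(x)) = \overline{q}(E(x)) = 0.
\]
Since $q(x)$ is positive and $\overline{E}$ is faithful on positive elements (a standard property of the reduced crossed product for discrete $G$), we conclude $q(x) = 0$, i.e.\ $x\in \ker q = I\rtimes_r G$. Finally, $J \subseteq I\rtimes_r G$ follows by expressing each element of the ideal as a linear combination of four positive ones.

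The main obstacle is conceptual rather than computational: two structural ingredients must be combined at just the right moment. Exactness is what identifies $\ker q$ with $I\rtimes_r G$ and not some a priori larger ideal, while faithfulness of $\overline{E}$ converts the vanishing of the expectation into the vanishing of $q(x)$. Both inputs are classical, but either failing would collapse the argument; the hypothesis $E(x) \in I_{A\rtimes_r G}[x]$ is precisely what forces $E(J_+)$ into $I$ in the first place.
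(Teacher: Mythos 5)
Your argument is correct, and it is essentially Sierakowski's original proof; the paper itself does not reprove this statement but imports it directly as \cite[Theorem 1.13]{sierakowski}, so there is no in-paper proof to diverge from. All the key steps are in place: $I:=J\cap A$ is $G$-invariant, the hypothesis forces $E(J_+)\subseteq I$, exactness identifies $\ker q$ with $I\rtimes_r G$, and faithfulness of $\overline{E}$ on positive elements of $(A/I)\rtimes_r G$ finishes the reverse inclusion. One phrasing quibble: $J$ does not literally contain the unitaries $\lambda_g$ (they are only multipliers of $A\rtimes_r G$); the correct justification for $I\rtimes_r G\subseteq J$ is that $J$, being a closed two-sided ideal, absorbs multipliers, so $a\lambda_g\in J$ for all $a\in I$ and $g\in G$, and such elements span a dense subspace of $I\rtimes_r G$.
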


In \cite[Theorem 2.2]{pasnicu_phillips_2}, Pasnicu and Phillips have shown that if $G = \Z$ and the action has the Rokhlin property, then it has the ideal separation property. Sierakowski extended this result to include finite groups in \cite[Theorem 1.30]{sierakowski} (the result is originally due to Pasnicu and Phillips \cite[Corollary 2.5]{pasnicu_phillips}, although the proof does not rely on \autoref{thm_sierakowski}). The analogous result for actions of compact abelian groups with finite Rokhlin dimension was proved in \cite[Corollary 2.17]{gardella_hirshberg_santiago}. We now extend these result to actions of residually finite groups with finite Rokhlin dimension. The following lemma (whose proof we omit) will be useful to us.

\begin{lemma}\label{lem_orthogonal_contractions}
Let $B$ be a C*-algebra, $\{v_1, v_2, \ldots, v_n\}$ be self-adjoint, orthogonal contractions and $a,b\in B$. Then
\[
\left\|\sum_{i=1}^n v_iav_i - \sum_{i=1}^n v_ibv_i\right\| \leq \|a-b\|
\]
\end{lemma}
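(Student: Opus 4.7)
The key observation is that with $c := a - b$, the statement reduces to the inequality $\|\sum_{i=1}^n v_i c v_i\| \leq \|c\|$, and the pairwise orthogonality of the $v_i$ (which, in view of their self-adjointness, is equivalent to the single relation $v_iv_j = 0$ for $i \neq j$) should allow us to reduce the norm of a sum to a maximum of individual norms.

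First I would show that the elements $x_i := v_i c v_i$ are themselves pairwise orthogonal in $B$: for $i \neq j$, $x_i x_j^* = v_i c (v_i v_j) c^* v_j = 0$ and $x_i^* x_j = v_i c^* (v_i v_j) c v_j = 0$. Using the C*-identity,
\[
\left\|\sum_i x_i\right\|^2 = \left\|\Bigl(\sum_i x_i\Bigr)\Bigl(\sum_i x_i\Bigr)^{\!*}\right\| = \left\|\sum_i x_i x_i^*\right\|.
\]
The positive elements $x_i x_i^* = v_i c v_i^2 c^* v_i$ are again pairwise orthogonal (the same cancellation $v_iv_j = 0$ kills all cross products), and the norm of a sum of pairwise orthogonal positives is the maximum of their individual norms. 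Hence
\[
\left\|\sum_i x_i\right\|^2 = \max_i \|x_i x_i^*\| = \max_i \|x_i\|^2 \leq \|c\|^2,
\]
where the last step uses that each $v_i$ is a contraction so $\|x_i\| = \|v_i c v_i\| \leq \|c\|$.

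A more compact packaging of the same idea is a row/column estimate: writing $R := (v_1, \ldots, v_n) \in M_{1,n}(B)$ and $C := R^* \in M_{n,1}(B)$, one has $\sum_i v_i c v_i = R \cdot \mathrm{diag}(c, \ldots, c) \cdot C$, with $\|R\| = \|C\| = \|\sum_i v_i^2\|^{1/2} \leq 1$ (again by orthogonality of the $v_i^2$) and $\|\mathrm{diag}(c,\ldots,c)\| = \|c\|$. There is no real obstacle here beyond the bookkeeping of orthogonality relations; the only substantive ingredient is the standard fact that for pairwise orthogonal positive elements the norm of the sum equals the maximum of the summand norms.
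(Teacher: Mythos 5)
Your proof is correct. Note that the paper explicitly omits its own proof of this lemma, so there is nothing to compare against; both of your packagings (the direct computation via the C*-identity plus the fact that pairwise orthogonal positive elements $x_ix_i^{\ast}$ satisfy $\|\sum_i x_ix_i^{\ast}\| = \max_i\|x_ix_i^{\ast}\|$, and the row--column estimate using $\|\sum_i v_i^2\| = \max_i\|v_i\|^2 \leq 1$) are complete and standard, and each correctly uses that for self-adjoint elements the paper's notion of orthogonality reduces to $v_iv_j = 0$ for $i \neq j$.
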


\begin{theorem}\label{thm_ideal_separation}
Let $A$ be a C*-algebra, $G$ be an exact, residually finite group and $\alpha : G\to \Aut(A)$ be an action of $G$ on $A$ with $\dr(\alpha) < \infty$. For any subgroup $H < G$, $A$ separates ideals in $A\rtimes_r H$.
\end{theorem}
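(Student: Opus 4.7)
The plan is to invoke Sierakowski's theorem (\autoref{thm_sierakowski}) applied to the restricted action $\alpha|_H:H\to\Aut(A)$: since subgroups of exact groups are exact, $\alpha|_H$ is an exact action, and it suffices to verify that $E(x)\in I_{A\rtimes_r H}[x]$ for every positive $x\in A\rtimes_r H$, where $E$ is the canonical conditional expectation onto $A$. Fix such an $x$ and $\epsilon>0$. Approximate $x$ in norm by a finitely-supported element $y=\sum_{s\in F}a_s\lambda_s$ with $F\ssubset H$, $e\in F$, and $\|x-y\|<\delta$ (where $\delta$ will be chosen at the end). By residual finiteness of $G$ and finiteness of $F$, choose $K\nf G$ with $K\cap F=\{e\}$; let $m:=[G:K]$ with coset representatives $g_1K,\ldots,g_mK$, and let $D\subset A$ be a separable, $\alpha$-invariant C*-subalgebra containing $\{a_s:s\in F\}$.

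Applying \autoref{lem_rokhlin_system_finite_index} with Rokhlin systems of successively larger parameters and tolerance $1/n$, the hypothesis $d:=\dr(\alpha)<\infty$ produces c.c.p.\ maps $\psi_\ell^n:C(G/K)\to A$ whose images in the central sequence algebra $F(D,A)$ assemble into $G$-equivariant, order-zero c.c.p.\ maps $\varphi_\ell:C(G/K)\to F(D,A)$ satisfying $\sum_{\ell=0}^d\varphi_\ell(1)=1_{F(D,A)}$; this is the analogue of \autoref{cor_rokhlin_dim_equiv_defn} for the finite space $G/K$ and requires no finite-generation hypothesis on $G$. Set $\tilde v_i^{(\ell)}:=\varphi_\ell(\delta_{g_iK})$ and $\tilde w_i^{(\ell)}:=(\tilde v_i^{(\ell)})^{1/2}$: these are pairwise orthogonal positive contractions commuting with $D$, satisfy $\widetilde{\alpha}_\infty^s(\tilde w_i^{(\ell)})=\tilde w_{si}^{(\ell)}$ (with $si$ denoting the index of $sg_iK$), and $\sum_{\ell,i}(\tilde w_i^{(\ell)})^2=1$. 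Choose lifts $(w_i^{(\ell),n})_n$ in $A$, and put
\[
z_n:=\sum_{\ell=0}^d\sum_{i=1}^m w_i^{(\ell),n}\,x\,w_i^{(\ell),n}\in I_{A\rtimes_r H}[x].
\]

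Decompose $z_n-a_e=E_1^{(n)}+E_2^{(n)}$ where
\[
E_1^{(n)}:=\sum_{\ell,i}w_i^{(\ell),n}(x-y)w_i^{(\ell),n},\qquad E_2^{(n)}:=\sum_{\ell,i}w_i^{(\ell),n}\,y\,w_i^{(\ell),n}-a_e.
\]
For $E_1^{(n)}$: the $\tilde w_i^{(\ell)}$ descend to self-adjoint orthogonal contractions in the larger central sequence algebra $F(D,A\rtimes_r H)$ (centrality and orthogonality relative to $D$ automatically descend to exact relations there), so \autoref{lem_orthogonal_contractions} applied tower by tower yields $\|E_1^{(n)}\|\le (d+1)\|x-y\|$ up to an error vanishing as $n\to\infty$. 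For $E_2^{(n)}$, expand $y=\sum_{s\in F}a_s\lambda_s$ and examine the coefficient $\sum_{\ell,i}w_i^{(\ell),n}a_s\alpha_s(w_i^{(\ell),n})$ of $\lambda_s$: when $s=e$, approximate centrality $[w_i^{(\ell),n},a_e]\to 0$ together with $\sum_{\ell,i}(w_i^{(\ell),n})^2\cdot a_e\to a_e$ forces convergence to $a_e$; when $s\in F\setminus\{e\}$, the condition $K\cap F=\{e\}$ and normality of $K$ give $si\ne i$, whereupon successive applications of approximate centrality, equivariance ($(\alpha_s(w_i^{(\ell),n})-w_{si}^{(\ell),n})\cdot d\to 0$ for $d\in D$), and orthogonality ($w_i^{(\ell),n}w_{si}^{(\ell),n}\cdot d\to 0$) drive the coefficient to $0$. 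Summed over the finite set $F$, this gives $\|E_2^{(n)}\|\to 0$.

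Fixing $\delta<\epsilon/(3(d+1))$ at the outset and then picking $n$ large enough yields $\|z_n-E(x)\|\le\|E_1^{(n)}\|+\|E_2^{(n)}\|+\|a_e-E(x)\|<\epsilon$. Since $z_n\in I_{A\rtimes_r H}[x]$ and this ideal is closed, $E(x)\in I_{A\rtimes_r H}[x]$, verifying the hypothesis of \autoref{thm_sierakowski}. The main technical hurdle is the bound on $E_1^{(n)}$: the orthogonality of the $\tilde w_i^{(\ell)}$'s is exact only in the limit algebra (only approximate in $A$), so the estimate $\|E_1^{(n)}\|\le (d+1)\|x-y\|$ — crucially devoid of an $m$-factor — is obtained by transferring the computation to $F(D,A\rtimes_r H)$, where \autoref{lem_orthogonal_contractions} applies directly to the genuinely orthogonal $\tilde w_i^{(\ell)}$'s.
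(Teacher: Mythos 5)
Your overall strategy coincides with the paper's: verify Sierakowski's criterion by approximating $x$ by a finitely supported $y$, choosing $K\nf G$ with $K\cap F=\{e\}$, and conjugating by square roots of Rokhlin elements for $C(G/K)$. The $E_2^{(n)}$ computation is fine, since every relation used there is applied against elements of $D$. The gap is exactly where you flag ``the main technical hurdle'': the bound on $E_1^{(n)}$. The relations $\tilde w_i^{(\ell)}\tilde w_j^{(\ell)}=0$ hold in $F(D,A)$, i.e.\ only modulo $\Ann(D,A_\infty)$; concretely they say $w_i^{(\ell),n}w_j^{(\ell),n}d\to 0$ for $d\in D$, not $w_i^{(\ell),n}w_j^{(\ell),n}\to 0$ in norm. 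Passing to $F(D,A\rtimes_r H)$ does make the $\tilde w_i^{(\ell)}$ exactly orthogonal there, but the quotient map by $\Ann(D,(A\rtimes_r H)_\infty)$ is norm-decreasing, so applying \autoref{lem_orthogonal_contractions} inside that quotient bounds only the image of $(E_1^{(n)})_n$ there --- in effect only products of $E_1^{(n)}$ with elements of $D$ --- and gives no control on $\limsup_n\|E_1^{(n)}\|$ in $A\rtimes_r H$ itself, which is what your closing estimate $\|z_n-E(x)\|<\epsilon$ requires. (There is also the minor point that $x-y$ need not commute with $D$ even asymptotically, so it does not obviously define an element of the relative commutant you want to compute in.) As written, the $m$-free bound $\|E_1^{(n)}\|\le(d+1)\|x-y\|+o(1)$ is not established.

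The gap is repairable, and the paper's proof shows one way: since the cone over the finite-dimensional algebra $C(G/K)$ is projective, the approximate Rokhlin maps can be corrected so that the contractions $y_{\overline s}^{(i)}=\varphi_i(\delta_{\overline s})^{1/2}$ are \emph{exactly} orthogonal in $A$ within each tower $i$; then \autoref{lem_orthogonal_contractions} applies honestly in $A\rtimes_r H$ and yields $\bigl\|\sum_{\overline s}y_{\overline s}^{(i)}(x-z)y_{\overline s}^{(i)}\bigr\|\le\|x-z\|$, whence the factor $(d+1)$ with no $[G:K]$. Alternatively, your lifting approach can be salvaged without any orthogonality: if you take $w_i^{(\ell),n}:=\psi_\ell^n(\delta_{g_iK})^{1/2}$ for c.c.p.\ maps $\psi_\ell^n:C(G/K)\to A$, then $\sum_i(w_i^{(\ell),n})^2=\psi_\ell^n(1_{C(G/K)})$ is a positive contraction, so the completely positive map $c\mapsto\sum_iw_i^{(\ell),n}cw_i^{(\ell),n}$ on $A\rtimes_r H$ is contractive and $\|E_1^{(n)}\|\le(d+1)\|x-y\|$ holds for every $n$. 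With either repair the remainder of your argument goes through and matches the paper's.
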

\begin{proof}
Since $H$ is exact, the action $\alpha_H : H \to \Aut(A)$ is exact, so it suffices to verify the conditions of \autoref{thm_sierakowski}. Let $d := \dr(\alpha) < \infty$ and set $B := A\rtimes_{r} H$. Fix $x\in B^+$ and we wish to prove that $E(x) \in I_B[x]$. For $\epsilon > 0$ fixed, there exists $z \in C_c(H,A)$ such that $\|x-z\| < \frac{\epsilon}{3(d+1)}$. Let $M\ssubset H$ be a finite set such that $z = \sum_{t\in M} a_t u_t$. Assume $e\in M, \|a_t\| \leq 1$ for all $t\in M$ and choose $K \nf G$ such that $M\cap K = \{e\}$. For convenience of notation, we write $k := |M|$. Let $F = \{a_t : t\in M\} \ssubset A, S := \{\delta_{\overline{s}} : \overline{s} \in G/K\} \ssubset C(G/K)$ and fix $\eta > 0$ to be chosen later. Choose $0 < \delta < \eta$ such that if $d,d'\in A$ are two positive contractions, then
\begin{eqsplit}
\|[d,d']\| < \delta &\Rightarrow \|[\sqrt{d}, d']\| < \eta \text{ and } \\
\|d-d'\| < \delta &\Rightarrow \|\sqrt{d} - \sqrt{d'}\| < \eta.
\end{eqsplit}
The first condition can be met by \cite[Exercise 3.9.6]{higson_roe} and the second by \cite[Lemma 1.2.5]{rordam}. By \autoref{lem_rokhlin_system_finite_index},  there exist $(d+1)$ c.c.p. maps
\[
\varphi_0, \varphi_1, \ldots, \varphi_d : C(G/K) \to A
\]
which form a $(K, d, F, M, S, \delta)$-Rokhlin system. Moreover, since the cone over $C(G/K)$ is projective, we may arrange it so that
\[
\varphi_i(\delta_{\overline{s}})\varphi_i(\delta_{\overline{r}}) = 0
\]
whenever $\overline{s} \neq \overline{r}$ in $G/K$ (see \cite[Remark 1.18]{hirshberg_phillips} and \cite[Theorem 4.6]{loring}). For $\overline{s} \in G/K$ and $0\leq i\leq d$, define $y_{\overline{s}}^{(i)} := \sqrt{\varphi_i(\delta_{\overline{s}})}$. Then for all $a\in F$,
\begin{enumerate}
\item $\sum_{i=0}^d \sum_{\overline{s} \in G/K} (y^{(i)}_{\overline{s}})^2a \approx_{\delta} a$.
\item $y^{(i)}_{\overline{s}}y^{(i)}_{\overline{r}} = 0$ for any $\overline{s}, \overline{r} \in G/K$ with $\overline{s}\neq \overline{r}$. In particular, if $t\in M\setminus\{e\}$, then $y^{(i)}_{\overline{s}}y^{(i)}_{\overline{ts}} = 0$ for all $\overline{s} \in G/K$.
\item $[y^{(i)}_{\overline{s}},a] \approx_{\eta} 0$ for all $\overline{s} \in G/K$.
\item $\alpha_t(y^{(i)}_{\overline{s}})a \approx_{\eta} y^{(i)}_{\overline{ts}}a$ and $a\alpha_t(y^{(i)}_{\overline{s}}) \approx_{\eta} ay^{(i)}_{\overline{ts}}$ for any $t\in M$ and $\overline{s} \in G/K$.
\end{enumerate}
For any $\overline{s} \in G/K$ and $0\leq i\leq d$,
\begin{eqsplit}
y_{\overline{s}}^{(i)}z y_{\overline{s}}^{(i)} &= \sum_{t\in M} y_{\overline{s}}^{(i)} a_tu_t y_{\overline{s}}^{(i)} \\
&= \sum_{t\in M} y_{\overline{s}}^{(i)}a_t \alpha_t(y_{\overline{s}}^{(i)})u_t \\
&\approx_{k\eta} \sum_{t\in M} y_{\overline{s}}^{(i)} a_t y_{\overline{ts}}^{(i)}u_t \\
&\approx_{k\eta} \sum_{t\in M} y_{\overline{s}}^{(i)}y_{\overline{ts}}^{(i)} a_tu_t \\
&= (y_{\overline{s}}^{(i)})^2a_e.
\end{eqsplit}
Therefore,
\begin{eqsplit}
E(x) &\approx_{\frac{\epsilon}{3(d+1)}} E(z) \\
&= a_e \\
&\approx_{\eta} \sum_{i=0}^d \sum_{\overline{s}\in G/K} (y_{\overline{s}}^{(i)})^2 a_e \\
&\approx_{(d+1)(2k)\eta} \sum_{i=0}^d \sum_{\overline{s}\in G/K} y^{(i)}_{\overline{s}}zy^{(i)}_{\overline{s}} \\
&\approx_{(d+1)\frac{\epsilon}{3(d+1)}} \sum_{i=0}^d \sum_{\overline{s} \in G/K} y_{\overline{s}}^{(i)}xy_{\overline{s}}^{(i)}
\end{eqsplit}
where the last approximation follows from \autoref{lem_orthogonal_contractions}. If we choose $\eta > 0$ so that
\[
\eta < \frac{\epsilon}{3(1 + (d+1)(2k))},
\]
then,
\[
E(x) \approx_{\epsilon} \sum_{i=0}^d \sum_{\overline{s} \in G/K} y_{\overline{s}}^{(i)}xy_{\overline{s}}^{(i)} \in I_B[x].
\]
Hence, $E(x) \in I_B[x]$ for each $x\in B^+$, so by \cite[Theorem 1.13]{sierakowski}, $A$ separates ideals in $A\rtimes_r H$.
\end{proof}

We end with a short application of this result. If $A$ is a C*-algebra and $\alpha \in \Aut(A)$, we write $\text{Sp}(\alpha)$ for the Arveson spectrum of $\alpha$ (see \cite[Section 8.1]{pedersen}). If $H^{\alpha}(A)$ denotes the set of all non-zero, $\alpha$-invariant, hereditary C*-subalgebras of $A$, then the Connes spectrum of $\alpha$ is defined as
\[
\Gamma(\alpha) = \bigcap_{D \in H^{\alpha}(A)} \text{Sp}(\alpha\lvert_D)
\]
Write $H^{\alpha}_{\mathrm{B}}(A)$ for the set of all $D \in H^{\alpha}(A)$ with the property that the closed ideal generated by $D$ is an essential ideal of $A$. Then the Borchers spectrum of $\alpha$ is defined as
\[
\Gamma_{\mathrm{B}}(\alpha) = \bigcap_{D\in H^{\alpha}_{\mathrm{B}}(A)} \text{Sp}(\alpha\lvert_D)
\]
For more about these objects and their relationship with the ideal structure of the reduced crossed product, the reader is referred to \cite{pedersen}. The notion of proper outerness defined below is originally due to Kishimoto \cite{kishimoto_free}. For separable C*-algebras, this definition is equivalent to a number of other conditions (see \cite[Theorem 6.6]{olesen_pedersen_3}). For convenience, we adopt one such condition as our definition.

\begin{definition}
Let $A$ be a separable C*-algebra. An automorphism $\alpha \in \Aut(A)$ is said to be properly outer if $\Gamma_{\mathrm{B}}(\alpha\lvert_D) \neq \{1\}$ for each $D\in H^{\alpha}(A)$. Moreover, an action $\alpha : G\to \Aut(A)$ of a discrete group on $A$ is said to be properly outer if $\alpha_g$ is properly outer for each non-identity element $g\in G$.
\end{definition}

\begin{cor}
Let $A$ be a separable C*-algebra, $G$ be an amenable, residually finite group satisfying the (VRC) property, and let $\alpha : G\to \Aut(A)$ be an action of $G$ on $A$. If $\dr(\alpha) < \infty$, then $\alpha$ is properly outer.
\end{cor}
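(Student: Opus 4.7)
The plan is to reduce proper outerness of $\alpha_g$ (for a fixed non-identity $g \in G$) to the ideal separation property already established in \autoref{thm_ideal_separation}. First, let $H := \langle g \rangle$ denote the cyclic subgroup of $G$ generated by $g$. Since $G$ satisfies (VRC), $H \leq_{vr} G$, so \autoref{thm_restriction_subgroup} gives $\dr(\alpha_H) \leq \dr(\alpha) < \infty$. Given any $\alpha_g$-invariant hereditary C*-subalgebra $D \subseteq A$ (which is automatically $H$-invariant since $H = \langle g \rangle$), \autoref{thm_permanence}(3) yields $\dr(\alpha_H|_D) < \infty$, and since $H$ is cyclic --- hence abelian, amenable, exact, and residually finite --- \autoref{thm_ideal_separation} applied with $H$ in the role of the acting group shows that $D$ separates ideals in $D \rtimes_r H$. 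The same argument applies to any $H$-invariant hereditary subalgebra of $D$.

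To complete the proof, I would proceed by contradiction and invoke the Olesen-Pedersen theory of Borchers spectra on separable C*-algebras (see \cite{olesen_pedersen_3} and \cite[Ch.~8]{pedersen}): a failure of proper outerness, $\Gamma_{\mathrm{B}}(\alpha_g|_D) = \{1\}$, produces an $\alpha_g$-invariant essential ideal $J$ of $D$ together with a unitary $u \in M(J)$ such that $\alpha_g|_J = \text{Ad}(u)$. Setting $w := u^* \lambda_g$ inside $M(J \rtimes_r H)$ gives a unitary that commutes with $J$ and has the same order as $g$; the resulting ``untwisting'' identifies $J \rtimes_r H$ with the spatial tensor product $J \otimes C^*(H)$. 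Since $C^*(H)$ is a nontrivial commutative C*-algebra, it admits proper non-zero ideals, and their preimages under the above isomorphism produce ideals of $J \rtimes_r H$ that do not arise from any $H$-invariant ideal $I \vartriangleleft J$, contradicting the ideal separation of $J \rtimes_r H$ (established by the same reasoning applied to the $H$-invariant hereditary subalgebra $J$).

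The principal obstacle is carrying out the last step carefully: invoking the correct form of the Olesen-Pedersen structure theorem to extract an inner implementation on an essential ideal, and then cleanly exploiting the decomposition $J \rtimes_r H \cong J \otimes C^*(H)$ to exhibit an ideal defeating separation. A minor technicality is treating the cases $|g| < \infty$ and $|g| = \infty$ uniformly, which poses no real difficulty since $C^*(H)$ possesses proper non-zero ideals in either setting. Everything else in the proof is an essentially routine chaining of results from the earlier sections of the paper.
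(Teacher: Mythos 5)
Your reduction to the cyclic subgroup $H = \langle g\rangle$ via the (VRC) property, \autoref{thm_restriction_subgroup}, \autoref{thm_permanence} and \autoref{thm_ideal_separation} is exactly the first half of the paper's proof. The gap lies in the second half, at precisely the point you flagged as the ``principal obstacle.'' The Olesen--Pedersen theory does not produce, from $\Gamma_{\mathrm{B}}(\alpha_g\lvert_D) = \{1\}$, an essential $\alpha_g$-invariant ideal $J$ of $D$ and a unitary $u\in M(J)$ with $\alpha_g\lvert_J = \mathrm{Ad}(u)$ \emph{exactly}. What \cite[Theorem 6.6]{olesen_pedersen_3} gives upon failure of proper outerness is only a nonzero invariant ideal $I$ and a unitary $u\in M(I)$ with $\|\alpha_g\lvert_I - \mathrm{Ad}(u)\| < 2$, i.e.\ a partial/approximate inner implementation. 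Without exact innerness the identification $J\rtimes_r H \cong J\otimes C^{\ast}(H)$ is unavailable, and the intended contradiction with ideal separation evaporates. (Even granting exact innerness, for $g$ of finite order $n$ one only knows that $u\alpha_g(u)\cdots\alpha_{g^{n-1}}(u)$ is central in $M(J)$, not equal to $1$, so the crossed product is a priori only a centrally twisted object rather than a genuine tensor product; this would require a further argument.)

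The paper sidesteps all of this by arguing in the positive direction rather than by contradiction: ideal separation for $D\rtimes_r H$ implies that every nonzero ideal of $D\rtimes_r H$ has nonzero intersection with $D$, and by \cite[Theorem 2.5]{olesen_pedersen_2} this is equivalent to $\Gamma(\alpha_g\lvert_D) = \widehat{H}$ for the cyclic (hence abelian) group $H$. Since the Borchers spectrum contains the Connes spectrum, it follows that $\Gamma_{\mathrm{B}}(\alpha_g\lvert_D) = \widehat{H}\neq\{1\}$ for every $D\in H^{\alpha_g}(A)$, which is precisely the definition of proper outerness adopted in the paper. If you wish to retain your contradiction scheme, you would have to run a Kishimoto-type argument directly from the estimate $\|\alpha_g\lvert_I - \mathrm{Ad}(u)\| < 2$, which is substantially more work than the citation above.
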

\begin{proof}
Fix $g\in G\setminus\{e\}$ and let $H$ be the cyclic subgroup generated by $g$. Since $G$ satisfies the (VRC) property, the restricted action $\alpha_H : H \to \Aut(A)$ also has finite Rokhlin dimension by \autoref{thm_restriction_subgroup}. Replacing $G$ by $H$, we may assume that $G$ is cyclic. We write $\beta$ for the automorphism $\alpha_g$ (and for the action $\beta : G\to \Aut(A)$), and write $\widehat{G}$ for the Pontryagin dual of $G$.

Now if $D$ is a non-zero, $\beta$-invariant, hereditary C*-subalgebra of $A$, then $\beta\lvert_D : G\to \Aut(D)$ also has finite Rokhlin dimension by \autoref{thm_permanence}. Note that $G$ is exact, so by \autoref{thm_ideal_separation}, each non-zero ideal in $D\rtimes_r G$ has non-zero intersection with $D$. By \cite[Theorem 2.5]{olesen_pedersen_2}, $\Gamma(\beta\lvert_D) = \widehat{G}$. Hence,
\[
\Gamma_{\mathrm{B}}(\beta\lvert_D) = \widehat{G}
\]
This is true for each $D \in H^{\alpha}(A)$, so $\beta$ is properly outer.
\end{proof}

\textbf{Acknowledgements:} The first named author is supported by NBHM Fellowship (Fellowship No. 0203/26/2022/R{\&}D-II/16171) and the second named author was partially supported by the SERB (Grant No. MTR/2020/000385). The authors would like to thank the anonymous referee for their comments which helped strengthen some of these results.

\end{document}